\newtheorem{thm}{Theorem}
\newtheorem{lem}{Lemma}
\newtheorem{prop}{Proposition}
\newtheorem{ex}{Example}
\newtheorem{rem}{Remark}
\newcommand{\RR}{\mathbb R}
\newcommand{\VV}{\mathbb V}
\newcommand{\M}{\mathcal M}
\newcommand{\Opn}{\mathcal O}
\newcommand{\R}{\mathcal R}
\newcommand{\nU}{\mathcal U}
\newcommand{\X}{\mathcal X}
\newcommand{\Y}{\mathcal Y}
\newcommand{\Z}{\mathcal Z}
\newcommand{\Bf}{\mathbf f}
\newcommand{\cof}{\mathop {\rm cof}\nolimits}
\newcommand{\tr}{\mathop {\rm tr}\nolimits}
\def \div {\mathop {\rm div}\nolimits} 
\newcommand{\gd}{\nabla}
\newcommand{\gs}{\nabla^{s}}
\newcommand{\bd}[1]{\boldsymbol{#1}}
\newcommand{\mrm}[1]{\mathrm{#1}}
\definecolor{veert}{RGB}{11, 164 , 20}
\definecolor{col1}{rgb}{0.04,0.5,0.15}  
\definecolor{col2}{rgb}{0.5,0.5,0.5} 
\newcommand{\lV}{\lVert}
\newcommand{\rV}{\rVert}
\newcommand{\lv}{\lvert}
\newcommand{\rv}{\rvert}
\newcommand{\lb}{\lbrace}
\newcommand{\rb}{\rbrace}
\newcommand{\class}{C}
\newcommand{\sep}{\mid}
\newcommand{\Ksub}{\subset\subset}
\newcommand{\id}{\mathrm{id}}
\newcommand{\had}{D}
\newcommand{\dn}{n}
\newcommand{\Id}{\mathrm{I}}
\newcommand{\tp}{\top}
\newcommand{\Mscalp}{:}
\newcommand{\opEx}{\mathop {P}\nolimits} 
\newcommand{\dx}{\, dx}
\newcommand{\nv}{n}
\newcommand{\nvI}{n}
\newcommand{\SHPfun}{\mathcal{J}}
\newcommand{\shpFUN}{j}
\newcommand{\adm}{\mathrm{ad}}
\newcommand{\stressF}{\varsigma}
\newcommand{\contract}{\boldsymbol{S}}
\newcommand{\dsp}{\mathrm{w}} 
\newcommand{\dspDual}{\mathfrak{w}}
\newcommand{\dspTestb}{w}
\newcommand{\LagVol}{\mathrm{s}} 
\newcommand{\LagVolDual}{\mathfrak{s}}
\newcommand{\vct}{\mathrm{u}} 
\newcommand{\vcT}{\mathrm{v}} 
\newcommand{\vcTDual}{\mathfrak{v}}
\newcommand{\pr}{\mathrm{p}} 
\newcommand{\prT}{\mathrm{q}} 
\newcommand{\prTDual}{\mathfrak{q}}
\newcommand{\eT}{T} 
\newcommand{\lT}{T} 
\newcommand{\Tupt}{\lT^{t}} 
\newcommand{\rmH}{\boldsymbol{\mathrm{H}}}
\newcommand{\rmK}{\boldsymbol{\mathrm{K}}}
\newcommand{\rmb}{\boldsymbol{\mathrm{b}}} 
\newcommand{\rmA}{\boldsymbol{\mathrm{A}}}
\newcommand{\rmB}{\boldsymbol{\mathrm{B}}}
\newcommand{\BF}{A}
\newcommand{\LF}{L}
\newcommand{\NL}{\mathfrak{F}}
\newcommand{\dLf}{\, \mathrm{d}X}
\newcommand{\dLft}{\, \mathrm{d}X_t}
\newcommand{\dLs}{\, \mathrm{d}Y}
\newcommand{\dLst}{\, \mathrm{d}Y_t}
\newcommand{\rrF}{\boldsymbol{F}}
\title[Shape sensitivity of a 2D Fluid-Structure Interaction problem]
{Shape sensitivity of a 2D Fluid-Structure Interaction problem between a viscous incompressible fluid and an incompressible elastic structure.}
\author{V. Calisti}
\address[V. Calisti]{
Institute of Mathematics of the Czech Academy of Sciences, 
\v{Z}itn\'{a} 25, 115 67 Praha 1, Czech Republic. 
Formerly: 
Institut \'{E}lie Cartan de Lorraine, UMR 7502, Universit\'{e} de Lorraine, B.P. 70239, 54506 Vandoeuvre-l\`{e}s-Nancy Cedex, France}
\email{calisti@math.cas.cz}
\author{I. Lucardesi}
\address[I. Lucardesi]{
Dipartimento di Matematica e Informatica, Università degli Studi di Firenze, Firenze, Italy.
Formerly: Institut \'{E}lie Cartan de Lorraine, UMR 7502, Universit\'{e} de Lorraine, B.P. 70239, 54506 Vandoeuvre-l\`{e}s-Nancy Cedex, France
}
\email{ilaria.lucardesi@unifi.it}
\author{J-F. Scheid}
\address[J-F. Scheid]{
Institut \'{E}lie Cartan de Lorraine, UMR 7502, Universit\'{e} de Lorraine, B.P. 70239, 54506 Vandoeuvre-l\`{e}s-Nancy Cedex, France}
\email{jean-francois.scheid@univ-lorraine.fr}
\date{\today}
\begin{document}

\maketitle


\begin{abstract}
We study the shape differentiability of a general functional depending on the solution of a bidimensional stationary Stokes-Elasticity system, with respect to the reference domain of the elastic structure immersed in a viscous fluid. The differentiability with respect to reference elastic domain variations are considered under shape perturbations with diffeomorphisms. The shape-derivative is then calculated with the use of the velocity method. This derivative involves the material derivatives of the solution of this Fluid-Structure Interaction problem. The adjoint method is used to obtain a simplified expression for the shape derivative.
\end{abstract}

\medskip
\noindent{\bf Keywords.}
Fluid-structure system, Stokes and elasticity equations, Shape optimisation, Shape sensitivity.



\section{Introduction}

Fluid Structure Interaction (FSI) problems model physical systems in which a solid body (rigid or deformable) interacts with a fluid (internal or external to the body).
In this work, we consider an elastic body in plain strain, clamped to a rigid support in its interior and immersed in a viscous incompressible fluid. The system is infinite in the anti-plane dimension. 
From the mathematical point of view, we consider a system of bi-dimensional stationary PDEs which involves on one hand the Stokes equations for the fluid flow and, on the other hand, an incompressible linearized elastic equations for the deformation of the structure. These two sub-systems are coupled through a boundary condition on the interface between the solid and the fluid, by imposing the forces continuity across the interface.

In this paper, we are interested in a shape optimization issue for this fluid-structure interaction problem. We aim to study the shape sensibility with respect to the reference domain $\Omega_0$ of the elastic body, also called the reference configuration (i.e. the domain at rest, before deformation) of a given shape functional. This functional depends on the elastic reference domain $\Omega_0$ as well as on the corresponding solution of the full PDEs system. We point out that in this context, we do not directly control the shape of the deformed elastic body which actually interacts with the fluid. 

The goal of this paper is to show the differentiability of a broad family of shape functionals (e.g. energy functional, drag functional,...) in which the shape is the reference configuration $\Omega_0$ of the elastic body and also to calculate the associated shape derivatives.
The differentiability is tackled with respect to the reference configuration $\Omega_0$ by considering a class of perturbations of $\Omega_0$ by diffeomorphisms.
We also provide formulas for the associated shape derivatives.
%
These derivatives would be useful in a numerical shape optimisation procedure (as e.g. steepest descent methods) to determine an optimal elastic reference domain that minimizes a given shape functional
(see e.g. 
\cite{allaire2007conception,
mohammadi_pironneau2001shapo_fluid,
gao_ma_zhuang2009min_drag_NS,
yoon2010topology}).

Dealing with an FSI problem, the first mathematical issue is proving \textit{existence} of solutions. 
Early important contributions can be found in  
\cite{antman_lanza-de-cristoforis1991fsi, 
antman_lanza-de-cristoforis1991large_def, 
antman_lanza-de-cristoforis1992large_def_axisym} 
in which the authors study stationary flows in nonlinear elastic shells and also nonlinear elastic tubes and shells under external flow for which the velocity is prescribed.
In the early 2000, mathematicians  
started to investigate 
more intensely the interaction of a  
viscous liquid with elastic bodies in steady and unsteady regime.
For steady-state problems, one can cite
\cite{rumpf1998equilibriaFSI, 
grandmont2002existence, 
bayada_chambat_vazquez2004existence,  
surulescu2007stationary, 
galdi_kyed2009steady_flow}
and for the unsteady case, we refer for example to
\cite{grandmont_maday2003FSI, 
desjardins_esteban_grandmont2001weak_sol, 
chambolle_desjardins_esteban_grandmont2005existence, 
boulakia_schwindt_takahashi2012existence, boulakia_guerrero_takahashi2019wellposedness, 
muha_canic2013existence}. 
One of the difficulties in the study of these kind of FSI problems is that the fluid, described in Eulerian coordinates, turns to be defined on a domain depending on the structure displacement, which is instead described in Lagragian coordinates.
For the FSI problem under consideration in this paper, we will first establish the existence and uniqueness of the solution.

The second issue in FSI problems is to find {\it optimal structures}  which optimize a suitable desired efficiency in fluid dynamics, possibly under constraint. Great interest has been shown in the minimization of the drag in fluid mechanics optimisation (see e.g., \cite{arumugam_pironneau1989drag_reduc,
mohammadi_pironneau2001shapo_fluid, 
gao_ma_zhuang2009min_drag_NS}),  
in the shape minimization of the dissipated energy in a pipe (see e.g.,  \cite{henrot_privat2010pipe, bergounioux_privat2013shapo_axisym}) 
or in the optimisation of fluid flow with or without body forces (see e.g. \cite{deng_liu_wu2013optopoNS}). 
In all this mentioned works, the shape or the geometry in which PDEs lie, are fixed and known. 
Shape optimisation applied to FSI problems, where the geometry is one of the unknowns, is more recent.
One can cite 
\cite{yoon2010topology, yoon2014optopoFSI, 
kreissl_pingen_evgrafov_maute2010topology, andreasen_schousboe_sigmung2013optopo_fsi, jenkins_maute2015shapoptopoFSI} 
where level-set methods are used to characterize the fluid and the structure domains,  
and also 
\cite{picelli_vicente_pavanello2015bidir,  picelli_ranjbarzadeh_sivapuram_giora_silva_2020,  lundgaard_alexandersen_zhou2018optopoFSI} in which the FSI problem is relaxed by a density design variable.
The work presented in this paper is an extension of what is done in 
\cite{scheid_sokolowski2018shapo_fs} where the shape differentiability of a simplified free-boundary one-dimensional problem is studied and for which it is proved that the shape optimisation problem is well-posed.
In the recent papers \cite{feppon_allaire_bordeu_cortial2019optopo_TFSI,  feppon_allaire_dapogny_jolivet2021body-fitted_optopo}, the shape and topological optimisation of a multiphysics thermal-fluid-structure interaction problem is studied with a velocity and adjoint method, for which the structure domain is assumed to be fixed. 
In \cite{Wick2019}, differentiability results are shown for the solutions of a stationary fluid-structure interaction 
problem in an ALE framework. The differentiability is considered with respect to variations of the given data (volume forces and boundary
values) but not with respect to the reference domain of the elastic structure, as it is done in this present paper. 
Finally, we mention the work of Haubner et al \cite{Haubner2020} where the method of mappings is used for proving differentiability results with respect to domain variations, for {\it unsteady} fluid-structure interaction problems that couple the Navier–Stokes equations and the Lamé system.

The paper is organized as follows: we start, in Section \ref{sec:FSI_model}, with a presentation of the FSI problem under study. In Section \ref{sec:FSI:resolution} we prove existence and uniqueness result for the FSI problem, first by analyzing separately the fluid equations and  the structure problem, and finally by coupling the two subsystems through a fixed point procedure.
Then, in Section \ref{sec:hadam:method}, after an introduction to the calculus of shape derivatives by the \emph{velocity method}, we apply this approach to our FSI problem: the sensitivity analysis allows us to show that the solutions of the FSI system are shape-differentiable.
The last part in Section \ref{sec:shp-der}, is devoted to the calculation of the shape derivative of an abstract shape functional. Using the \emph{adjoint method}, we also give a simplified expression of the shape derivative, not depending on the \textit{material derivatives} of the solutions of the FSI problem
but involving the solutions of adjoint problems.

\section{A two-dimensional FSI model with a shape optimisation problem}
\label{sec:FSI_model}



In this section, we first present the FSI model under study and then the related shape optimisation problem that will be addressed in this paper.
The FSI model couples the Stokes equations with the elasticity equation and follows essentially \cite{grandmont2002existence} and \cite{scheid_sokolowski2018shapo_fs}. The difference with respect to the literature is the assumption of linear incompressible elasticity for the structure, which results in a divergence free condition for the structure's displacement.

\subsection{Notations}
In this preliminary paragraph we fix the notations that will be used throughout the paper.
Let $\lb \mrm{e}_1 , \cdots , \mrm{e}_{\dn} \rb$ be the canonical orthogonal basis of $\RR^{\dn}$, $\dn\geq 2$.
Let $u$ and $v$ be two vectors of $\RR^{\dn}$, $A$ and $B$ be two second order tensors of $\RR^{\dn}$. 
Using the Einstein summation convention, we set
\begin{align}
%
AB & = A_{ik}B_{kj} \, \mathrm{e}_i \otimes \mathrm{e}_j , \\
A \Mscalp  B & = A_{ij}B_{ij} , \\
Au &= A_{ij}u_j \, \mathrm{e}_i , \\
u\cdot v &= u_iv_i  ,
\end{align}
where  $\{\mathrm{e}_i \otimes \mathrm{e}_j\}_{i,j}$ forms the canonical basis of the second order tensors on $\RR^{\dn}$.  
Denoting by $\Id$ the identity matrix, we define the trace $\tr ( A ) $ of a matrix $A$  by
\begin{equation}
\tr ( A ) = \Id \Mscalp  A , 
\end{equation} 
its symmetric part by 
\begin{equation}
\label{eq:def:sym:matrix}
A^s := \frac{1}{2} \left( A + A^{\tp} \right), 
\end{equation}
and its norm $\lv A \rv$  by
\begin{equation}
\label{eq:def:matrixnorm}
\lv A \rv = ( A \Mscalp  A )^{1/2}.
\end{equation}
Moreover, if $A$ is an invertible matrix, we define the cofactor matrix of $A$ by 
\begin{equation}
\label{eq:def:matrixcofacteur}
\cof( A ) = \det( A  ) A^{-\tp}.
\end{equation}

Let $\Omega$ be an open subset of $\RR^{\dn}$, $\dn\geq 2$. 
The functions or fields involved in the equations we study in this paper belong to Sobolev spaces $W^{m,p}(\Omega)$, for $m\geq 0$ a positive integer, and $1 \leq p \leq +\infty$. 
With this convention, $W^{0,p}(\Omega)$ stands for the Lebesgue space of $L^p (\Omega)$. The norm in $W^{m,p}(\Omega)$ is denoted by $\lV \cdot \rV_{m,p, \Omega }$, or, when no ambiguity may arise, simply by $
\lV \cdot \rV_{m,p}$. 
Finally, the space $W^{m,2}(\Omega)$ will simply be denoted by $H^{m}(\Omega)$.

\subsection{The Fluid-Structure Interaction model}
We consider a two-dimensional elastic body (the structure) immersed in an incompressible viscous fluid and clamped from a part of its boundary, while applying volume forces to both fluid and elastic parts.
This results in the deformation of the free boundary of the elastic body, which is the interface where the interaction between the elastic body and the fluid takes place (see Figure \ref{fig:contour}).

In order to describe this setting, we fix three simply connected bounded open sets $\omega, \had_0, \had \subset \mathbb R^2$, such that $\omega \Ksub \had_0 \Ksub \had$. 
We denote by $\Gamma_0$ and $\Gamma_\omega$ the boundaries of $\had_0$ and $\omega$, respectively.
The annular domain
\begin{equation}
\label{eq:FSI_def_Omega0}
\Omega_0:= \had_0 \setminus \overline{\omega}
\end{equation}
represents the region occupied by the elastic body, that we assume to be clamped on the boundary part $\Gamma_\omega$. The complementary set in the box $D$, namely the annular domain
\begin{equation}
\label{eq:FSI_def_Omega0_c}
\Omega_0^c := \had \setminus \overline{D_0},
\end{equation}
is the region occupied by the fluid, that we take incompressible. The elastic body and the fluid interact through the interface $\Gamma_0$, which is deformable.


\begin{figure}[ht]
\centering
\begin{tikzpicture}[scale=1.8]
\tikzstyle{flecheFS}=[->,>=latex,line width=0.2mm]
\draw (-1.5,-1.5) -- (-1.5,1.5) -- (1.5,1.5) -- (1.5,-1.5) -- cycle;
\draw [domain=0:2*pi, samples=80, smooth] plot ({-(1 + 0.5*exp(-cos(\x r)))*cos(\x r)/2}, {sin(\x r)/2}) ;
\fill[color=gray!20] [domain=0:2*pi, samples=80, smooth] plot ({-(1 + 0.5*exp(-cos(\x r)))*cos(\x r)/2}, {sin(\x r)/2}) ;
\fill[color=gray!60] (0,0) circle (0.2) ; 
\draw (0,0) circle (0.2) ;
\draw (0,0) node{$\omega$} ;
\draw (-1.5,1.3) node[left]{$\had$} ;
\draw (1.1,1.2) node{$\Omega_{0}^{c}$} ;
\draw (0.5,0) node{$\Omega_{0}$} ;
\draw (0,0.8) node{$\Gamma_{0}$} ;
\draw[flecheFS] (0.2,0.7) to[bend left=20] (0.6,0.39);
\draw (-0.65,-0.65) node{$\Gamma_{\omega}$} ;
\draw[flecheFS] (-0.58,-0.58) to[bend left=20] (-0.16,-0.13);
\draw (1.075,-0.7) node{$n_{0}$} ;
\draw[flecheFS] (0.745,-0.345) to  (0.9,-0.7);
%
%
\begin{scope}[xshift = 4cm]
\draw [domain=0:2*pi, samples=80, dashed] plot ({-(1 + 0.5*exp(-cos(\x r)))*cos(\x r)/2}, {sin(\x r)/2}) ;
\draw (-1.5,-1.5) -- (-1.5,1.5) -- (1.5,1.5) -- (1.5,-1.5) -- cycle;
\draw [domain=0:2*pi, samples=80, smooth] plot ({-(1 + 0.5*exp(-cos(\x r)))*cos(\x r)/2}, {(1 - 0.3*( exp(-10*(\x - 3*pi/4)^2 ) +exp(-10*(\x - 5*pi/4)^2 ) ) )*sin(\x r)/2}) ;
\fill[color=gray!20] [domain=0:2*pi, samples=80, smooth] plot ({-(1 + 0.5*exp(-cos(\x r)))*cos(\x r)/2}, {(1 - 0.3*( exp(-10*(\x - 3*pi/4)^2 ) +exp(-10*(\x - 5*pi/4)^2 ) ) )*sin(\x r)/2}) ;
\fill[color=gray!60] (0,0) circle (0.2) ; 
\draw (0,0) circle (0.2) ;
\draw (0,0) node{$\omega$} ;
\draw (-1.5,1.3) node[left]{$\had$} ;
\draw (1.1,1.2) node{$\Omega_{F}$} ;
\draw (0.5,0) node{$\Omega_{S}$} ;
\draw (0,0.8) node{$\Gamma_{FS}$} ;
\draw[flecheFS] (0.25,0.7) to[bend left=20] (0.6,0.29);
\draw (1.1,-0.6) node{$n_{FS}$} ;
\draw[flecheFS] (0.745,-0.24) to  (0.8,-0.62);
\end{scope}
\end{tikzpicture}
\caption{The geometry of the FSI system,  before (left) and after (right) deformation induced by the interaction between the fluid and the structure.}
\label{fig:contour}
\end{figure}
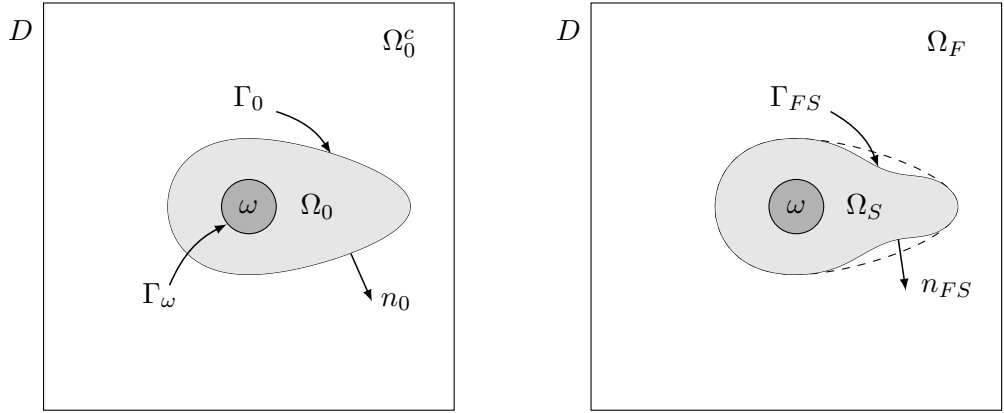

The fluid and the structure are subject to volume forces which result in a deformation of the elastic part. In our analysis, we assume that the system is at equilibrium, in particular, the time variable will not appear in the model.


The deformed elastic body, denoted by $\Omega_S$, is described in Lagrangian coordinates, that is, through a function defined in the reference configuration:
$$
\Omega_S:=\eT(\dsp)(\Omega_0), 
$$
with 
\begin{equation}
\label{eq:def:T:i}
\eT(\dsp):\Omega_0 \to D\setminus \overline{\omega}, \quad 
\eT(\dsp)= \id_{\RR^2} + \dsp,
\end{equation}
where $\id_{\RR^2}$ is the identity in $\mathbb R^2$ and $\dsp$ is the \textit{elastic displacement field} in $\Omega_0$.
Accordingly, the deformed fluid-structure interface is
\begin{equation}
\label{eq:def:GammaS}
\Gamma_{FS} := \eT (\dsp) (\Gamma_0) =  ( \id_{\RR^2} + \dsp )( \Gamma_0 ).
\end{equation}


On the other hand, the fluid is described in Eulerian coordinates, namely through functions defined in the region surrounding the deformed elastic body
\begin{equation}
\label{eq:def:fluid_domain}
\Omega_F := \had \setminus \overline{\Omega_S \cup \omega}.
\end{equation} 
The functions describing the fluid are the \emph{velocity field} $\vct : \Omega_F \rightarrow \RR^2$ and the \emph{pressure field} $\pr  : \Omega_F \rightarrow  \RR$.

In the following paragraphs, we will specify the PDEs governing the two phases of the system, and their interaction.  

\subsubsection{Fluid equations}
In the framework of incompressibility, the velocity field $u$ and the pressure field $p$ are governed by Stokes equations:
\begin{equation} 
\label{eq:S-ou-NS0}
\left\{
\begin{aligned}
- \div \varsigma( \vct , \pr )    & =  f && \text{ in }   \Omega_{F}, \\
\div\ \vct & =  0  &&  \text{ in }  \Omega_{F},  \\
\vct & =  0  && \text{ on }  \partial\Omega_{F} .
\end{aligned}
\right.  
\end{equation}
In the system, $f: \RR^2 \to \mathbb R^2$ is the applied force, defined in the whole space, whereas $\stressF$ is the \emph{Cauchy stress tensor}, defined by
\begin{equation}
\stressF( \vct , \pr ):= 2\nu \gs \vct 
- \pr \Id,
\end{equation} 
with $\nu > 0$ the viscosity of the fluid. We recall that the superscript $s$ stands for the symmetrization operator (see \eqref{eq:def:sym:matrix}).


\subsubsection{Structure equations}
We suppose that the elastic body is attached to the rigid support $\omega$ via its boundary $\Gamma_\omega$. This assumption results in a Dirichlet boundary condition for the elastic displacement $\dsp$:
\begin{equation}
\dsp = 0 \quad \text{ on } \Gamma_\omega.
\end{equation} 
A given volume force $g$ is applied to the structure in $\Omega_0$ and the elastic displacement $\dsp$ satisfies the elasticity equation
\begin{equation}
\label{eq:FSI:general}
-\div \sigma(\dsp) = g \ \text{ in } \Omega_0,
\end{equation}
where $\sigma$ is the  \emph{linearized stress tensor} (also called the second Piola-Kirchoff stress tensor) or simply \emph{stress tensor}
\begin{equation}
\label{eq:def:sigma}
\sigma ( \dsp ) := 
2\mu \gs \dsp+ \lambda  ( \div \dsp )   \Id. \end{equation}
Here $\lambda$ and $\mu$ are the so-called Lam\'{e} coefficients (see e.g. \cite{ciarlet_three-dimensional_1988}).
Furthermore, we impose the equilibrium of the surface forces on the free boundary $\Gamma_0$ which reads as
\begin{equation}
\label{eq:FSI:1}
\int_{\Gamma_0} \sigma ( \dsp ) \nvI_0 \cdot ( v \circ ( \id_{\RR^2} + \dsp ) ) \mathrm{d}\Gamma_0
= \int_{\Gamma_{FS}}  \varsigma ( \vct , \pr ) \nvI_{FS} \cdot v \  \mathrm{d}\Gamma_{FS}  ,
\end{equation}
for all function $v$ defined on $\Omega_F$. In the above relation, $\Gamma_{FS}$ is defined in \eqref{eq:def:GammaS} and denotes the boundary between the fluid domain $\Omega_F$ and the deformed elastic body $\Omega_S$, whereas $\mathrm{d}\Gamma_0$ and $\mathrm{d}\Gamma_{FS}$ are the length elements of the boundaries $\Gamma_0$ and $\Gamma_{FS}$ respectively, 
and finally $\nvI_0$ and $\nvI_{FS}$ are the outer unit normal vectors to $\Gamma_0$ and $\Gamma_{FS}$ respectively. 
Recalling that $\Gamma_{FS}$ is the image of $\Gamma_0$ via $T (\dsp) =\id_{\RR^2} + \dsp$, cf. \eqref{eq:def:T:i}-\eqref{eq:def:GammaS}, we infer (see e.g. \cite{ciarlet_three-dimensional_1988}) that
\begin{equation}
\label{eq:change:var:surf}
\nvI_{FS} \mathrm{d}\Gamma_{FS}  =  [ \det ( \gd (\eT(\dsp)) )   \gd (\eT(\dsp)) ^{-\mathsf{T}}  \nvI_{0}  ]  \mathrm{d}\Gamma_0 .
\end{equation}  
Thus, using $\eT (\dsp)$ for a change of variables in \eqref{eq:FSI:1} together with \eqref{eq:change:var:surf}, we get 
the following boundary condition 

\begin{equation}
\label{eq:BC_FS_traction_Gamma0}
\sigma ( \dsp ) \nv_0
= (  \varsigma ( \vct , \pr ) \circ \eT ) \cof ( \gd \eT ) \nv_0 
\quad \text{ on } \Gamma_0,
\end{equation}
where
\begin{equation}
\cof( \gd \eT ) = \det( \gd \eT  ) (\gd \eT) ^{-T}
\end{equation}
is the cofactor matrix of the jacobian matrix of $\eT := T( \dsp )$.

\smallskip
In this paper, we consider the special case of linear incompressible elasticity for the structure, by imposing the following equation for the displacement:
\begin{equation}
\label{eq:def:elast-incomp-constr}
\div \dsp = 0 .
\end{equation}
We introduce a Lagrange multiplier function $ \LagVol $ associated to the incompressibility constraint \eqref{eq:def:elast-incomp-constr}. Then, the structure equation \eqref{eq:FSI:general} together with the continuity condition of forces \eqref{eq:BC_FS_traction_Gamma0} become for $( \dsp , \LagVol )$:
\begin{equation}
\label{eq:bvp:simplified_elast}  
\left\lbrace
\begin{aligned} 
-  \div \sigma (\dsp ) +\nabla \LagVol & =  g  && \text { in }   \Omega_{0} , \\
%
%
%
\left(\sigma (\dsp )  - \LagVol  \Id \right) \nvI _{0} & =  ( \varsigma ( \vct, \pr ) \circ \eT) \cof ( \nabla \eT ) \nvI _{0} && \text { on }  \Gamma_{0} .   
\end{aligned}
\right.
\end{equation}

\subsubsection{Full FSI coupled system}
Using the fact that both the velocity $\vct$ and the displacement $\dsp$ are divergence free,
the FSI system for $(\vct,\pr)$ and $(\dsp,\LagVol)$ that we consider in this paper is the following:
\begin{equation} 
\label{eq:S-ou-NS}
\left\{
\begin{aligned}
- \nu \div ( \gd  \vct ) +  \gd \pr     & =  f && \text{ in }  \Omega_{F}, \\
\div\ \vct & =  0 &&  \text{ in }  \Omega_{F},  \\
\vct & =  0 && \text{ on }  \partial\Omega_{F}, 
\end{aligned}
\right.  
\end{equation}
and
\begin{equation}
\label{eq:bvp:incompress_elast}
\left\lbrace
\begin{aligned}
- \mu \div ( \gd \dsp ) + \nabla \LagVol & = g  &&\text { in } \Omega_{0} &&  \\
\div \dsp  & = 0 &&\text { in } \Omega_{0}  &&  \\
\dsp  & = 0 &&\text { on } \Gamma_{\omega}  &&   \\
\left( \mu  \gd  \dsp  - \LagVol  \Id \right) \nvI _{0} &=( ( \nu \gd \vct - \pr \Id ) \circ \eT) \cof ( \nabla \eT ) \nvI _{0} &&\text { on } \Gamma_{0} .   &&  
\end{aligned}
\right.
\end{equation}

\begin{rem}
We point out that if we consider the general linear elastic equation \eqref{eq:FSI:general} in place of the special incompressible case given by \eqref{eq:def:elast-incomp-constr} and \eqref{eq:bvp:simplified_elast}, we have to add an area preserved constraint for the displacement $\dsp$, due to the incompressibility of the fluid. This extra constraint reads as:
\begin{equation}
\label{eq:contrainte_fs_1}
\lv \Omega_S \rv = \int_{\Omega_0} \det ( \Id + \gd \dsp ) \dx = \lv \Omega_0 \rv,
\end{equation}
where $\lv \cdot \rv$ denotes the Lebesgue measure for domains.
We have that 
\begin{equation}\label{eq:dev_det}
\det \left( \Id + \nabla \dsp \right) = 1 + \div ( \dsp ) +\det ( \nabla \dsp ) = 1 + \div ( \dsp ) + O \left( \lV \nabla \dsp \rV_{\infty}^{2} \right) .
\end{equation}
So, under the condition that $\div \dsp = 0$ and neglecting the second order terms in \eqref{eq:dev_det}, we obtain that the area constraint \eqref{eq:contrainte_fs_1} is satisfied. 

\end{rem}

We can observe that the coupling of the FSI problem \eqref{eq:S-ou-NS}-\eqref{eq:bvp:incompress_elast} is twofold:
\begin{itemize}
\item the structure displacement $\dsp$ affects and defines the domain $\Omega_F$ on which the fluid equations are posed and where the velocity $\vct$ and the pressure $\pr$ are defined,
\item the velocity $\vct$ and the pressure $\pr$ of the fluid give rise to a surface force which influences the calculation of the displacement $\dsp$.
\end{itemize}
One of the main difficulties lies in the fact that there are two kinds of variables under consideration. On the one hand, the FSI problem involves Eulerian variables with the fluid velocity
$\vct$ and pressure $\pr$, and on the other hand, the elastic displacement $\dsp$ and the multiplier $\LagVol$ are Lagrangian variables.

Moreover, the domain $\Omega_F$ on
which the fluid equations are written is unknown.
To overcome these difficulties, we need to transport the fluid equations into a reference domain matching with the elastic reference domain $\Omega_0$.

\subsection{Fixed domain formulation of the FSI problem}
\label{sec:FSI:fixed-domain-formul}

In order to tackle the FSI problem \eqref{eq:S-ou-NS}-\eqref{eq:bvp:incompress_elast}, we 
transpose the fluid equations \eqref{eq:S-ou-NS} posed on the fluid domain $\Omega_F$ onto the fixed domain $\Omega_0^c$ defined by
\begin{equation}\label{eq:complementary_omega0}
\Omega_0^c := \had \setminus \overline{\Omega_0 \cup \omega}.
\end{equation}
Thus, we need a $C^1$-diffeomorphism which maps $\Omega_0^c$ to $\Omega_F$.
To this aim, we consider an extension of the map $T$, initially defined on $\Omega_0$ in \eqref{eq:def:T:i}, to the whole box $D$. With a slight abuse of notation, we use the same letter $T$ and we set
\begin{equation}
\label{eq:def:T:ii}
\eT ( \dsp )   =   \id_{\RR^2} + \opEx ( \dsp ), 
\end{equation} 
where $\dsp$ is a displacement field defined in the initial elastic body domain $\Omega_0$, and $\opEx$ is an extension operator from  $\Omega_0$ to  $\had$, such that $\opEx ( \dsp )$ is defined in $\had$ and $\eT ( \dsp )$ is one to one in $\had$.  
This allows us to consider the fluid domain $\Omega_F$ defined as:
\begin{equation}
\Omega_F = \eT ( \dsp ) ( \Omega_0^c ),
\end{equation}
where $\Omega_0^c$ is defined in \eqref{eq:complementary_omega0} (see also Figure \ref{fig:contour}). We will go through this extension procedure in details later on, to give a rigorous definition of $T$. \\

In the same way as in \cite{grandmont2002existence}, 
we can define the transported velocity and pressure fields 
\begin{align}
\label{eq:def:v}
\vcT := \vct \circ \eT(\dsp) , \\
\label{eq:def:q}
\prT := \pr  \circ \eT(\dsp). 
\end{align}
With these new variables, we can write 
the fluid equations transported onto the reference domain $\Omega^c_{0}$, (e.g., by using the variational formulation as in \cite{calisti2021}, Section 3.2.2), 
and the complete FSI problem reads as:
\begin{equation}
\label{eq:complete-syst-stokes-stokes-transported}
\left\{  
\begin{aligned}
- \nu\div ( (\gd \vcT ) F(\dsp)) + G( \dsp)\gd  \prT  & =  (f\circ \eT(\dsp) ) J( \dsp  ) && \text{ in }  \Omega_{0}^c, \\
\div   (G(\dsp  )^{\tp} \vcT )  & =  0 &&  \text{ in }   \Omega_{0}^c ,  \\
\vcT & =  0 && \text{ on }  \partial \Omega_{0}^c , \\
- \mu \div  ( \gd \dsp) + \gd \LagVol & =  g && \text{ in }  \Omega_{0}, \\ 
\div \dsp & =  0 && \text{ in }  \Omega_0 ,\\
\dsp & =  0 && \text{ on }  \Gamma_\omega ,\\
( \mu \gd \dsp - \LagVol \Id )n_{0} &  =    \nu ( \gd \vcT ) F(\dsp )   n_{0} &&   \\
     &  \hspace{7ex} - \prT  G( \dsp )   n_{0} && \text{ on }  \Gamma_{0},
\end{aligned}
\right.
\end{equation}
where we have set
\begin{align}
\label{def:F(T)2}
F( \dsp)&:=(\gd \eT(\dsp) )^{-1}\cof (\gd \eT(\dsp) ) , \\
\label{def:G(T)2}
G(\dsp  )&:= \cof (\gd \eT(\dsp) ) , \\
\label{def:J(T)2}
J(\dsp)&:= \det (\nabla \eT(\dsp) ) .
\end{align}
The boundary condition on $\Gamma_0$ appearing in \eqref{eq:complete-syst-stokes-stokes-transported} comes from the computation of the surface force applied on the structure, given in \eqref{eq:BC_FS_traction_Gamma0} by $(  \varsigma ( \vct , \pr ) \circ \eT ) \cof ( \gd \eT ) \nv_0$, in terms of the new variables $\vcT $ and $\prT $:
\begin{equation}
\label{eq:struct:surface-force2}
( \varsigma ( \vct  , \pr  ) \circ \eT  ) \cof ( \nabla \eT  ) \nvI _{0} 
= (\nu(\gd \vcT ) F(\dsp ) -\prT  G(\dsp)) \nvI_{0 } .
\end{equation}

We point out that the FSI problem \eqref{eq:complete-syst-stokes-stokes-transported} is a sort of hybrid model compared to \cite{grandmont2002existence}, coming from the linearization of the equilibrium equation of the structure (that is to say the Piola-Kirchhoff stress tensor) and the area constraint \eqref{eq:contrainte_fs_1}. This has been done in order to simplify the shape optimisation analysis performed in this paper.
%
%
Moreover, we do not have linearized the terms arising from the fluid equations change of variables, i.e., $J( \dsp )$, $G( \dsp )$, and $F( \dsp )$, because we want to compute shape derivatives 
by keeping as much information as possible, for possible further applications and calculation purposes for a general system. 


\subsection{Optimisation of the FSI system}
\label{sec:FSI:intro:optim}

The shape sensitivity analysis of the FSI model \eqref{eq:complete-syst-stokes-stokes-transported} carried out in this article, is motivated by a shape optimisation problem. This problem consists in 
seeking for an optimal shape of the elastic reference domain $\Omega_0$ that minimizes a functional depending on the solution of the FSI system associated to $\Omega_0$. The shape optimisation problem we consider is of the following form:
\begin{equation}\label{eq:Shape_optim_pb}
\min_{\Omega_{0}\in \nU_{\adm}}  \SHPfun (\Omega_{0}) , 
\end{equation}
where $\SHPfun (\Omega_0)$ is a quite general shape functional depending on the initial elastic domain defined by
\begin{equation}
\label{eq:def:shp:fun}
\SHPfun (\Omega_0) 
= \int_{\Omega_0} \shpFUN_S (Y, \dsp(Y), \gd \dsp(Y)) \,\mathrm{d}Y
+ \int_{\Omega_F} \shpFUN_F (x, \vct(x), \gd \vct(x)) \,\mathrm{d}x ,
\end{equation}
where  $\shpFUN_F$ and $\shpFUN_S$ are smooth functions depending respectively on $\vct = \vcT \circ T(\dsp)^{-1}$ and $\dsp$. 
The fields $\vcT$ and $\dsp$ are the velocity and the displacement solutions of the FSI problem \eqref{eq:complete-syst-stokes-stokes-transported} posed on $\Omega_0 \cup \Omega_0^c$.
The domain $\Omega_0 \in \nU_{\adm}$ belongs to a class $\nU_{\adm}$ of smooth domains admissible for the FSI problem. 
For example, we can consider
\begin{align*}
\nU_{\adm} := 
\{ A \subset \mathbb R^2,\  & A=B\setminus\overline{\omega} \text{ with } B \text{ smooth,}  \\
&\text{ simply connected},  \omega \subset B\subset D \text{ and } |A|=|\Omega_0|\} .
\end{align*}

In this paper, we do not go as far as to solve the complete optimisation problem~\eqref{eq:Shape_optim_pb}. 
We will restrict our study to the shape sensitivity analysis of the FSI model~\eqref{eq:complete-syst-stokes-stokes-transported}. 



\section{Existence and uniqueness result for the FSI problem}
\label{sec:FSI:resolution}



In this section, we establish an existence and uniqueness result written in Theorem \ref{thm:FSI:pt-fixe} for the FSI problem \eqref{eq:complete-syst-stokes-stokes-transported}.
In \cite{grandmont2002existence}, an existence result is obtained for the Navier-Stokes equations coupled with a St Venant-Kirchhoff material in $3$D. 
For volume forces regular and small enough, the author finds a solution, not necessarily unique, to the FSI problem by applying a fixed point procedure. 
For our purpose, the uniqueness of the solution is required to address the associated optimisation problem and its shape sensitivity analysis.
For the same reason, 
we need higher regularity results for the data and the solutions of Problem 
\eqref{pbm:elasstokes-displacement-fixed:strong}. 
The existence and uniqueness result for our semi-linearized model is obtained
by adapting what is done in \cite{grandmont2002existence}.
\\
\begin{thm}
\label{thm:FSI:pt-fixe}
Let $\had$, $\Omega_0$ and $\omega$ be domains of the form \eqref{eq:FSI_def_Omega0}-\eqref{eq:FSI_def_Omega0_c} with boundary components $\partial \had$  and $\Gamma_{\omega}$ of class $C^{3}$ and $\Gamma_0$ of class $C^{3,1}$.
Let $f\in (H^{2} ( \RR^2 ) )^2$ and  $g\in (H^1 ( \Omega_0 ))^2$. 
There exists a positive constant $C$
such that if $\lVert f \rVert_{2,2} \leq C$ and $\lVert g \rVert_{1,2} \leq C$, 
then there exists a unique solution 
\begin{equation}
(\vcT, \prT , \dsp, \LagVol ) \in 
( H^1_0 (\Omega^c_0) \cap H^3 (\Omega^c_0) )^2 \times 
( L^2_0(\Omega^c_0) \cap  H^2(\Omega^c_0) ) \times
( H^1_{0,\Gamma_{\omega}}(\Omega_0) \cap H^3(\Omega_0) )^2 \times 
H^2(\Omega_0)
\end{equation}
to the FSI problem \eqref{eq:complete-syst-stokes-stokes-transported}. 
Furthermore, there exists a positive constant $C_{FS}$ such that
\begin{equation}
\lV \vcT \rV_{3,2,\Omega^c_0}
+ \lV \prT  \rV_{2,2,\Omega^c_0}
+ \lV \dsp \rV_{3,2,\Omega_0}
+ \lV \LagVol \rV_{2,2,\Omega_0}
\leq 
C_{FS}
(\lV f \rV_{2,2, \RR^2 }
+ \lV g \rV_{1,2, \Omega_0 }) .
\label{eq:estimate:theorem}
\end{equation}
\end{thm}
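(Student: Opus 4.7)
The plan is to adapt the fixed-point strategy of \cite{grandmont2002existence} to the present semi-linearized incompressible setting, replacing the Schauder-type argument used there by a Banach contraction argument in order to obtain uniqueness. I would decouple the coupled system \eqref{eq:complete-syst-stokes-stokes-transported} into two sub-problems: first, for a given displacement $\dsp$ (small in $H^3(\Omega_0)^2$), solve the transported Stokes system for $(\vcT,\prT)$ on $\Omega_0^c$; second, for given fluid fields $(\vcT,\prT)$, solve the incompressible elasticity system for $(\dsp^\star,\LagVol)$ on $\Omega_0$, with Dirichlet datum on $\Gamma_\omega$ and the coupling Neumann-type datum on $\Gamma_0$. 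A solution of the FSI problem is then obtained as a fixed point of the composition $\Phi : \dsp \mapsto \dsp^\star$.

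Before running this scheme, one fixes an $H^3$-continuous extension operator $\opEx$ so that $\eT(\dsp) = \id_{\RR^2} + \opEx(\dsp)$ is a $C^1$-diffeomorphism of $\had$ whenever $\lV \dsp \rV_{3,2,\Omega_0}$ is small. The two-dimensional Sobolev embedding $H^3 \hookrightarrow C^{1,1/2}$ together with the algebra property of $H^2$ imply that the matrix fields $F(\dsp), G(\dsp), J(\dsp)$ from \eqref{def:F(T)2}--\eqref{def:J(T)2} are Lipschitz perturbations of the identity in $H^2$-norm. With this in hand, the first sub-problem is a Stokes system with variable coefficients close to constant, and elliptic (Cattabriga-type) regularity on the $C^{3,1}$-domain $\Omega_0^c$ yields $(\vcT, \prT) \in (H^3 \cap H^1_0)^2 \times (H^2 \cap L^2_0)$ with a bound
\[ \lV \vcT \rV_{3,2} + \lV \prT \rV_{2,2} \le C \lV f \rV_{2,2} + C \lV \dsp \rV_{3,2} \bigl( \lV \vcT \rV_{3,2} + \lV \prT \rV_{2,2} \bigr), \]
whose last term is absorbed when $\lV \dsp \rV_{3,2}$ is small. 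For the second sub-problem, the coupling trace $\nu (\gd \vcT) F(\dsp) n_0 - \prT G(\dsp) n_0$ belongs to $H^{3/2}(\Gamma_0)^2$ by trace theory on the $C^{3,1}$ interface, and the incompressible elasticity system is itself a Stokes-type system on $\Omega_0$ for which the same elliptic theory produces a unique $(\dsp^\star, \LagVol) \in H^3 \times H^2$ with an analogous bound driven by $\lV g \rV_{1,2}$ and the boundary term.

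The contraction argument would then be carried out on a closed ball $B_\rho \subset H^3(\Omega_0)^2$ intersected with the constraints $\div \dsp = 0$ and $\dsp|_{\Gamma_\omega} = 0$. Combining the two sub-estimates with Lipschitz bounds of the form $\lV F(\dsp_1) - F(\dsp_2) \rV_{2,2} \le C \lV \dsp_1 - \dsp_2 \rV_{3,2}$ (and similar ones for $G$, $J$), one obtains
\[ \lV \Phi(\dsp) \rV_{3,2} \le C \bigl( \lV f \rV_{2,2} + \lV g \rV_{1,2} \bigr) + C \rho^2, \qquad \lV \Phi(\dsp_1) - \Phi(\dsp_2) \rV_{3,2} \le C \rho \, \lV \dsp_1 - \dsp_2 \rV_{3,2}. \]
Choosing first $\rho$ and then $\lV f \rV_{2,2} + \lV g \rV_{1,2}$ sufficiently small makes $\Phi$ a strict contraction of $B_\rho$ into itself. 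Banach's fixed-point theorem provides a unique $\dsp \in B_\rho$, hence a unique FSI solution; uniqueness in the whole space declared in the statement follows by applying the contraction estimate to any two putative solutions. Plugging the fixed point back into the two a priori bounds yields \eqref{eq:estimate:theorem}.

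The main technical obstacle is the careful bookkeeping of the nonlinear terms $F(\dsp), G(\dsp), J(\dsp)$ at the level of $H^3$-regularity: because they involve products of first derivatives of $\eT$ and the inverse of $\gd \eT$, their Lipschitz control in $H^3$ rests on the two-dimensional algebra property of $H^2$ and on the smallness of $\gd \opEx(\dsp)$ in $L^\infty$, which is what keeps $\gd \eT$ invertible with bounded inverse. A related delicate point is justifying the $H^{3/2}(\Gamma_0)$-regularity of the coupling traction — this is exactly why the interface is assumed to be $C^{3,1}$. Once these nonlinear estimates and the self-mapping/contraction inequalities are established, the rest of the argument is classical.
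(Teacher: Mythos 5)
Your proposal follows essentially the same route as the paper: decoupling \eqref{eq:complete-syst-stokes-stokes-transported} into the transported Stokes problem and the incompressible elasticity problem with the stress coupling on $\Gamma_0$, using the lifting/extension of the trace to define $\eT(\dsp)$, the $H^2$-algebra and smallness of $\gd\opEx(\dsp)$ to control $F$, $G$, $J$ as $H^2$-perturbations of the identity, elliptic regularity for both subproblems, and a Banach fixed point for $\dsp\mapsto\dsp^\star$ on a small ball in $(H^3(\Omega_0))^2$ under smallness of $f$ and $g$. The only minor bookkeeping difference is that the Lipschitz constant of the map one actually obtains scales with $\lV f\rV_{2,2}$ (the fluid loading) rather than with the ball radius $\rho$, but since you take the data small in the end this does not affect the conclusion, and the resulting a priori bound gives \eqref{eq:estimate:theorem} exactly as in the paper.
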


Before going through the proof of Theorem \ref{thm:FSI:pt-fixe}, let us introduce some preliminary elements that allow to well define the bijective map $\eT$ introduced in \eqref{eq:def:T:ii}.


\subsection{Preliminaries}
\label{sec:FS_resol_prelim}
Let $\rmb$ be a vector field belonging to $( H^3(\Omega_0) )^2$.  
We define the following transformation map:
\begin{equation}
\label{eq:FS_cdv_def1}
\begin{array}{rccc}
\eT  : &  ( H^3(\Omega_{0}) )^2 & \longrightarrow & ( H^3(\Omega_{0}^c) )^2 \\
  &   \rmb   & \longmapsto  &  \id_{\RR^2}  + \mathcal R  (  \gamma  ( \rmb ) ) , 
\end{array}
\end{equation}
where $\gamma$ is the trace operator on $\Gamma_0$:
\begin{equation}
\label{eq:def:trace-op}
\gamma : H^3(\Omega_{0})   \rightarrow   H^{3-1/2}(\Gamma_0) , 
\end{equation}
and $\R$ is a lifting operator from $\Gamma_0$ to $\Omega_0^c$:
\begin{equation}
\label{eq:def:lifting-op}
\R :  H^{3-1/2} (\Gamma_0) \rightarrow  H^3(\Omega_{0}^c) .
\end{equation} 
We note that $\gamma$ and $\R$ are continuous linear operators. 
The extension operator $P= \R \circ \gamma$ can then be used to define the transformation map $\eT(\dsp)$ introduced in \eqref{eq:def:T:ii}.
This map has to be a $C^1$-diffeomorphism,  which requires some regularity property of the displacement field $\dsp$. The following Lemma ensures that for a function 
$\rmb$ regular enough, the map $\eT ( \rmb )$ defined in \eqref{eq:FS_cdv_def1} can be used as a change of variable in the Stokes equations. A proof of this result can be found in \cite{grandmont2002existence}.
\begin{lem}
\label{lem:phi(b)-regularite}
There exists a positive constant  $\mathcal{M} $ such that if $\rmb \in ( H^3( \Omega_0 ) )^2$ satisfies 
\begin{equation}
\lVert \rmb \rVert_{H^3( \Omega_0 )} \leq \mathcal{M},
\end{equation}
then the following properties hold true:
\begin{enumerate}[label=\textit{(\roman*)}]
\item $\nabla ( \id_{\RR^2} + \mathcal{R}(\gamma (\rmb) ) ) = \Id + \nabla \mathcal{R} ( \gamma (\rmb) ) $ is an invertible matrix in $(H^2(\Omega_0^c ))^{2\times 2}$,
\item $\eT (\rmb) = \id_{\RR^2} + \mathcal{R} ( \gamma (\rmb) ) $ is one to one on $\overline{\Omega_0^c }$,
\item $\eT (\rmb)$ is a $C^1$-diffeomorphism from $\Omega_0^c$ onto $\eT (\rmb)(\Omega_0^c )$.
\end{enumerate}
\end{lem}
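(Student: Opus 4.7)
My plan is to exploit two standard features that make the smallness threshold $\mathcal{M}$ effective in two spatial dimensions: the Banach algebra structure of $H^{2}(\Omega_{0}^{c})$ (since $2 > \dn/2 = 1$, so in particular $H^{2}\hookrightarrow L^{\infty}$), and the Sobolev embedding $H^{3}\hookrightarrow C^{1,\alpha}$ for some $\alpha\in(0,1)$ on bounded planar domains with smooth boundary. Combined with continuity of the trace $\gamma$ from \eqref{eq:def:trace-op} and of the lifting $\R$ from \eqref{eq:def:lifting-op}, these produce a geometric constant $C_{0}$ such that
\begin{equation*}
\lVert \R(\gamma(\rmb)) \rVert_{3,2,\Omega_{0}^{c}} + \lVert \R(\gamma(\rmb)) \rVert_{C^{1,\alpha}(\overline{\Omega_{0}^{c}})} \leq C_{0}\,\lVert \rmb \rVert_{3,2,\Omega_{0}} .
\end{equation*}
I would then choose $\mathcal{M}$ so small that $C_{0}\mathcal{M}$ lies below each of the thresholds required in the three items.

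For item \emph{(i)}, I would set $A := \gd \R(\gamma(\rmb))$, so that $\gd \eT(\rmb) = \Id + A$ in $(H^{2}(\Omega_{0}^{c}))^{2\times 2}$. The pointwise bound $\lVert A \rVert_{\infty}\leq C_{0}\mathcal{M}$ makes $\Id + A(x)$ invertible for every $x$. Provided $C_{\mathrm{BA}}\lVert A \rVert_{2,2} < 1$, where $C_{\mathrm{BA}}$ is the Banach algebra constant of $H^{2}(\Omega_{0}^{c})$, the Neumann series $\sum_{k\geq 0}(-1)^{k}A^{k}$ converges in $(H^{2}(\Omega_{0}^{c}))^{2\times 2}$ to a limit which, by uniform convergence, must coincide pointwise with $(\Id+A)^{-1}$. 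Hence $(\Id + A)^{-1}\in (H^{2}(\Omega_{0}^{c}))^{2\times 2}$.

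For \emph{(ii)}, since $\overline{\Omega_{0}^{c}}$ is bounded with boundary of class $C^{3}$, it is quasi-convex: any two points $x,y \in \overline{\Omega_{0}^{c}}$ can be joined by a rectifiable path in $\overline{\Omega_{0}^{c}}$ of length at most $C_{\Omega}\lvert x-y\rvert$. Integrating $\gd \R(\gamma(\rmb))$ along such a path gives
\begin{equation*}
\lvert \R(\gamma(\rmb))(x) - \R(\gamma(\rmb))(y) \rvert \leq C_{\Omega}\,\lVert A \rVert_{\infty}\,\lvert x - y\rvert \leq C_{\Omega}C_{0}\mathcal{M}\,\lvert x - y\rvert .
\end{equation*}
Shrinking $\mathcal{M}$ so that $C_{\Omega}C_{0}\mathcal{M} < 1$ yields $\lvert \eT(\rmb)(x) - \eT(\rmb)(y)\rvert \geq (1 - C_{\Omega}C_{0}\mathcal{M})\,\lvert x - y\rvert$, which immediately implies injectivity of $\eT(\rmb)$ on $\overline{\Omega_{0}^{c}}$.

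Finally, for \emph{(iii)}, $\eT(\rmb)$ lies in $(C^{1}(\overline{\Omega_{0}^{c}}))^{2}$ by the embedding, is globally injective by \emph{(ii)}, and has everywhere invertible differential by \emph{(i)}. The inverse function theorem supplies a local $C^{1}$ inverse around each point of $\Omega_{0}^{c}$; global injectivity together with the openness of the image (ensured locally by the inverse function theorem) then assembles these into a global $C^{1}$-diffeomorphism $\eT(\rmb):\Omega_{0}^{c}\to \eT(\rmb)(\Omega_{0}^{c})$. The most delicate step in this chain is the preservation of $H^{2}$-regularity under pointwise inversion in \emph{(i)}: it rests squarely on the Banach algebra property $H^{2}\cdot H^{2}\subset H^{2}$, which in turn holds in dimension two precisely because $H^{2} \hookrightarrow L^{\infty}$.
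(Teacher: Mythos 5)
Your argument is correct and follows the same standard route as the proof the paper defers to (Grandmont 2002, itself in the spirit of Ciarlet's injectivity criterion): continuity of $\R\circ\gamma$ plus the embeddings $H^{3}\hookrightarrow C^{1,\alpha}$ and $H^{2}\hookrightarrow L^{\infty}$ to get smallness of $\gd\R(\gamma(\rmb))$ in both $H^{2}$ and $L^{\infty}$, a Neumann series in the Banach algebra $H^{2}$ for \emph{(i)}, the quasi-convexity/Lipschitz perturbation estimate for global injectivity in \emph{(ii)}, and the inverse function theorem for \emph{(iii)}. No gaps.
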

Note that the change of variables in the Stokes equations shows up some terms such as $(\gd \vcT ) F( \dsp  ))$ or $G( \dsp  )\gd  \prT$, see \eqref{eq:complete-syst-stokes-stokes-transported}. If we want them to be well-defined, we still need higher regularity for $\dsp$, and we need an algebra structure allowing products of functions. This is done with the following result offering  an algebra structure for Sobolev spaces (see \cite[Theorem 4.39, p. 106]{adams2003sobolev}).

\begin{lem}{}
\label{lem:algebra}
Let $\Omega$ be a bounded domain of $\mathbb{R}^{2}$ of class $\class^1$.
There exists a positive constant $C_{\mrm{a}}$ 
such that for all $u , \, v \in H^2(\Omega )$, we have $ u v \in  H^2(\Omega )$ and
\begin{equation}
\label{eq:lem-algebra1}
\left\lVert uv \right\rVert_{2,2,\Omega} 
\leq C_{\mrm{a}} \left\lVert u \right\rVert_{2,2,\Omega} 
       \left\lVert v \right\rVert_{2,2,\Omega}  .
\end{equation}
Furthermore, for all $ w \in H^1(\Omega )$ and  $u \in H^2(\Omega )$, we have $ u w \in  H^1(\Omega )$ and 
\begin{equation}
\label{eq:lem-algebra2}
\left\lVert u w \right\rVert_{1,2,\Omega} 
\leq C_{\mrm{a}} \left\lVert u \right\rVert_{2,2,\Omega} 
       \left\lVert w \right\rVert_{1,2,\Omega}  .
\end{equation}
\end{lem}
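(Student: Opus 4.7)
The plan is to prove Lemma \ref{lem:algebra} by exploiting the Sobolev embedding properties specific to dimension two, namely $H^2(\Omega)\hookrightarrow L^\infty(\Omega)$ and $H^1(\Omega)\hookrightarrow L^p(\Omega)$ for every $p<\infty$. Since $\Omega$ is a bounded Lipschitz (actually $C^1$) domain in $\RR^2$, both embeddings hold with constants depending only on $\Omega$. I will first prove the statement for smooth $u,v$ (or $u,w$) and then conclude by density, so that all formal computations below are legitimate.

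For the first estimate, I would expand $uv$ and its derivatives by the Leibniz rule:
\begin{equation}
\partial_i(uv) = u\,\partial_i v + v\,\partial_i u,\qquad \partial_{ij}^2(uv) = u\,\partial_{ij}^2 v + \partial_i u\,\partial_j v + \partial_j u\,\partial_i v + v\,\partial_{ij}^2 u .
\end{equation}
The $L^2$-norm of $uv$ is bounded by $\lV u\rV_{L^\infty}\lV v\rV_{L^2}\leq C\lV u\rV_{2,2}\lV v\rV_{2,2}$, using the continuous embedding $H^2(\Omega)\hookrightarrow L^\infty(\Omega)$. The first-order terms are handled symmetrically via $\lV u\,\partial_i v\rV_{L^2}\leq \lV u\rV_{L^\infty}\lV\partial_i v\rV_{L^2}$. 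For the zero-order and pure-second-order contributions in $\partial^2(uv)$, the same $L^\infty$–$L^2$ pairing works. The delicate term is the mixed one $\partial_i u\,\partial_j v$, for which $L^\infty$ control is not available; I would bound it with Cauchy–Schwarz after Hölder, writing
\begin{equation}
\lV \partial_i u\,\partial_j v\rV_{L^2}\leq \lV\partial_i u\rV_{L^4}\lV\partial_j v\rV_{L^4}\leq C\lV u\rV_{2,2}\lV v\rV_{2,2},
\end{equation}
where the last inequality uses $H^1(\Omega)\hookrightarrow L^4(\Omega)$ applied to $\partial_i u,\partial_j v\in H^1(\Omega)$. Summing the estimates yields \eqref{eq:lem-algebra1} with a constant $C_{\mrm{a}}$ depending only on the Sobolev embedding constants on $\Omega$.

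For the second estimate, I would repeat the same scheme but stop at first derivatives. Clearly $\lV uw\rV_{L^2}\leq \lV u\rV_{L^\infty}\lV w\rV_{L^2}\leq C\lV u\rV_{2,2}\lV w\rV_{1,2}$. For the gradient of $uw = u w$, I would split $\partial_i(uw)=u\,\partial_i w+w\,\partial_i u$ and bound
\begin{equation}
\lV u\,\partial_i w\rV_{L^2}\leq \lV u\rV_{L^\infty}\lV w\rV_{1,2},\qquad \lV w\,\partial_i u\rV_{L^2}\leq \lV w\rV_{L^4}\lV \partial_i u\rV_{L^4},
\end{equation}
using once more $H^2\hookrightarrow L^\infty$, $H^1\hookrightarrow L^4$, and the fact that $\partial_i u\in H^1(\Omega)\hookrightarrow L^4(\Omega)$. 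Combining gives \eqref{eq:lem-algebra2}.

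The main obstacle I anticipate is the mixed term $\partial_i u\,\partial_j v$ in the Hessian of $uv$: it is the only place where a pure $L^\infty$ bound fails and the two-dimensional nature of $\Omega$ becomes essential, since it is precisely $d=2$ that forces $H^1\not\hookrightarrow L^\infty$ but still gives $H^1\hookrightarrow L^4$. Everything else is a straightforward application of Leibniz, Hölder, and linearity of the embedding constants, and the whole argument transfers by density from $C^\infty(\overline{\Omega})$ to $H^2(\Omega)$ (respectively $H^1(\Omega)$).
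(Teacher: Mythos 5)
Your argument is correct. The paper itself does not prove this lemma: it simply cites \cite[Theorem 4.39]{adams2003sobolev}, so there is no internal proof to compare against. What you supply is the standard self-contained derivation of that reference's statement in the special case $n=2$, $m=2$ (resp.\ $m=1$): Leibniz expansion of the derivatives of the product, the embedding $H^2(\Omega)\hookrightarrow L^\infty(\Omega)$ for the terms pairing a function with a (second) derivative of the other, and the embedding $H^1(\Omega)\hookrightarrow L^4(\Omega)$ together with H\"older with exponents $(4,4)$ for the genuinely critical mixed term $\partial_i u\,\partial_j v$, which is indeed the only place where the two-dimensional restriction enters. The concluding density step is also the right way to legitimize the formal Leibniz computation: approximate in $C^\infty(\overline{\Omega})$ (available since a $C^1$ bounded domain admits such approximation), observe that the a priori estimate makes the approximating products Cauchy in $H^2(\Omega)$ (resp.\ $H^1(\Omega)$), and identify the limit with $uv$ (resp.\ $uw$) via the $L^2$ limit. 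The only gain of the citation route is generality (Adams--Fournier covers all $m$, $p$, $n$ with $mp>n$ and general multiplication $W^{m,p}\cdot W^{k,p}\to W^{k,p}$ for $k\le m$); your direct proof buys transparency about where the constant $C_{\mathrm{a}}$ comes from, namely the two embedding constants of $\Omega$, which is consistent with how the paper later tracks constants depending only on $\Omega$.
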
 
Now, we define the set
\begin{equation}
\label{eq:def:Bp}
B_{\M} := \lbrace \rmb \in ( H^3(\Omega_0) )^2 \sep \lVert \rmb \rVert_{3,2} \leq \M  \rbrace.
\end{equation}
Then, from the two preceding Lemmas, the following maps are well-defined:
\begin{eqnarray}
&J : (H^3(\Omega_0))^2 \rightarrow H^2( \Omega_0^c )& \nonumber\\
&J( \rmb )= \det ( \nabla \eT ( \rmb ) ),& 
\label{def:J(T)_2}
\end{eqnarray}
\begin{eqnarray}
&G : B_{\M} \rightarrow (H^2( \Omega_0^c ))^{2\times 2}& \nonumber \\
&G( \rmb )= \cof (\gd \eT (  \rmb  ) )  ,
\label{def:G(T)_2}&
\end{eqnarray}
and  
\begin{eqnarray}
&F : B_{\M} \rightarrow (H^2( \Omega_0^c ))^{2\times 2}& \nonumber\\
&F(  \rmb  )=(\gd \eT ( \rmb ) )^{-1} \cof (\gd \eT ( \rmb ) ).& 
\label{def:F(T)_2}
\end{eqnarray}

Now we give a result concerning the regularity of $J$, $G$, $F$ (see \cite{grandmont2002existence}).
\begin{lem}{}
\label{lem:regu:JGF}
The mapping $J$ 
is of class $C^\infty$ in $(H^3(\Omega_0))^2$.
The mappings  $G$ and $F$ 
are infinitely differentiable everywhere in $B_{\M}$ defined in \eqref{eq:def:Bp}. 
\end{lem}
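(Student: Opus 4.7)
The plan is to reduce the statement to three basic facts: continuous affine maps between Banach spaces are $C^\infty$; polynomial maps are $C^\infty$; and inversion in a unital Banach algebra is smooth on the open set of invertible elements. The key structural input is that, by Lemma \ref{lem:algebra}, pointwise multiplication makes $H^2(\Omega_0^c)$ a Banach algebra, and by the two-dimensional Sobolev embedding $H^2(\Omega_0^c) \hookrightarrow C^0(\overline{\Omega_0^c})$, elements of $H^2$ bounded away from zero can be safely inverted.

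First I would observe that $T(\rmb) = \id_{\RR^2} + \mathcal{R}(\gamma(\rmb))$ depends affinely on $\rmb$, since the lifting-trace composition $\opEx = \R \circ \gamma$ is a continuous linear map from $(H^3(\Omega_0))^2$ into $(H^3(\Omega_0^c))^2$. Consequently
\[
\rmb \longmapsto \nabla T(\rmb) = \Id + \nabla \opEx(\rmb)
\]
is a continuous affine map into the matrix space $(H^2(\Omega_0^c))^{2\times 2}$, and therefore of class $C^\infty$. For the maps $J$ and $G$, one then uses that in two dimensions $\det$ and $\cof$ are polynomial in the matrix entries (in fact $\cof$ is linear). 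By the Banach algebra property from Lemma \ref{lem:algebra}, these polynomial evaluations induce continuous polynomial, hence $C^\infty$, maps $(H^2(\Omega_0^c))^{2\times 2} \to H^2(\Omega_0^c)$ and $(H^2(\Omega_0^c))^{2\times 2} \to (H^2(\Omega_0^c))^{2\times 2}$ respectively. Composing with the smooth affine map $\rmb \mapsto \nabla T(\rmb)$ yields that $J$ and $G$ are $C^\infty$ everywhere in $(H^3(\Omega_0))^2$.

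For $F$, the extra ingredient is the inversion $(\nabla T(\rmb))^{-1}$. I would invoke the two-dimensional adjugate identity
\[
(\nabla T(\rmb))^{-1} = \tfrac{1}{J(\rmb)} \cof(\nabla T(\rmb))^{\tp},
\]
so that it suffices to show that $\rmb \mapsto 1/J(\rmb)$ is $C^\infty$ on $B_{\M}$. By Lemma \ref{lem:phi(b)-regularite}\textit{(i)} together with the embedding $H^2(\Omega_0^c) \hookrightarrow C^0(\overline{\Omega_0^c})$ and compactness of $\overline{\Omega_0^c}$, for every $\rmb \in B_{\M}$ the function $J(\rmb)$ is continuous on $\overline{\Omega_0^c}$ and bounded away from $0$ uniformly on a neighborhood of $\rmb$ in $(H^3(\Omega_0))^2$. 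The scalar inversion $s \mapsto 1/s$ is then $C^\infty$ on the open subset $\{s \in H^2(\Omega_0^c) \sep \inf_{\overline{\Omega_0^c}} s > 0\}$, with differential $h \mapsto -h/s^2$ well defined by the algebra property. Composing with $J$, and then multiplying (in the matrix Banach algebra over $H^2(\Omega_0^c)$) by the $C^\infty$ map $\rmb \mapsto \cof(\nabla T(\rmb))^{\tp}$, gives that $F$ is $C^\infty$ on $B_{\M}$.

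The main obstacle is precisely this last smoothness of the matrix inverse in a function-space setting: without the Banach algebra structure of $H^2(\Omega_0^c)$ and the pointwise lower bound on $J(\rmb)$ coming from the Sobolev embedding in dimension two, the chain-rule argument for $s \mapsto 1/s$ would not close. An equivalent, slightly more abstract route would be to apply the classical fact that in any unital Banach algebra the map $x \mapsto x^{-1}$ is $C^\infty$ on the open set of invertible elements, applied to the algebra $M_2(H^2(\Omega_0^c))$; the nontrivial verification is again that $\nabla T(\rmb)$ lies in this open set for every $\rmb \in B_{\M}$, which is exactly the content of Lemma \ref{lem:phi(b)-regularite}\textit{(i)}.
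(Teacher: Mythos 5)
Your argument is correct and is essentially the standard one that the paper delegates to \cite{grandmont2002existence}: $\rmb\mapsto\nabla \eT(\rmb)$ is continuous affine, $\det$ and $\cof$ are polynomial in the entries hence $C^\infty$ by the Banach-algebra property of $H^2(\Omega_0^c)$ from Lemma \ref{lem:algebra}, and $F$ follows from smoothness of inversion on the open set of invertibles of $M_2(H^2(\Omega_0^c))$, with invertibility of $\nabla \eT(\rmb)$ on $B_{\M}$ supplied by Lemma \ref{lem:phi(b)-regularite}. As a minor remark, in dimension two $\cof$ is linear, so $G$ is in fact affine and smooth on all of $(H^3(\Omega_0))^2$, slightly more than the lemma claims.
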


We conclude the paragraph with some remarks which will turn out useful in Sections \ref{sec:S_constraction} and \ref{sec:domain-diff}. 
From  Lemmas \ref{lem:algebra} and \ref{lem:regu:JGF} we have that $J$ defined from $B_{\M}$ into $H^2(\Omega_0^c)$  and $G$ and $F$ defined from $B_{\M}$ into $(H^2(\Omega_0^c))^{ 2 \times 2}$ are of class $C^\infty$, and the norms of their derivatives are bounded on $B_{\M}$. We set
\begin{align}
\label{eq:point-fixe:10}
\lVert DJ \rVert_{\mathcal{M}} &:= \sup_{\rmb \in B_{\M}} \lVert DJ(\rmb) \rVert_{\mathcal{L} ( H^3 (\Omega_0) , H^2 (\Omega_0^c)) } , \\
\label{eq:point-fixe:11}
\lVert DG \rVert_{\mathcal{M}} &:= \sup_{\rmb \in B_{\M}} \lVert DG(\rmb) \rVert_{\mathcal{L} ( H^3 (\Omega_0) , ( H^2 (\Omega_0^c))^{2 \times 2} ) } , \\
\label{eq:point-fixe:12}
\lVert DF \rVert_{\mathcal{M}} &:= \sup_{\rmb \in B_{\M}} \lVert DF(\rmb) \rVert_{\mathcal{L} ( H^3 (\Omega_0) , ( H^2 (\Omega_0^c))^{2\times 2} ) } .
\end{align} 
Noting that $J(0) \equiv 1$, $\nabla \eT (0) \equiv \Id$, and that from Sobolev Injection Theorem, $H^2(\Omega_0^c)$ is continuously embedded into $L^{\infty}(\Omega_0^c)$, we can choose $\mathcal{M}$ small enough in \eqref{eq:def:Bp}, so that there exists two positive constants $0 < C_1 < C_2$, such that for all $\rmb\in B_{\M}$ we have 
\begin{equation}
\label{eq:point-fixe:05bis}
C_1 \leq 
\lVert J(\rmb) \rVert_{2,2}, \,
\lVert J(\rmb)^{-1} \rVert_{2,2}, \,
\lVert \nabla \eT (\rmb) \rVert_{2,2}, \, 
\lVert \nabla \eT (\rmb)^{-1} \rVert_{2,2} 
\leq C_2 
\end{equation}
and
\begin{equation}
\label{eq:point-fixe:05}
 C_1 \leq 
\lVert J(\rmb) \rVert_{0,\infty}, \,
\lVert J(\rmb)^{-1} \rVert_{0,\infty}, \,
\lVert \nabla \eT (\rmb) \rVert_{0,\infty}, \, 
\lVert \nabla \eT (\rmb)^{-1} \rVert_{0,\infty} 
\leq C_2 . 
\end{equation}
%

Finally, let $\eta \in H^1 ( \RR^2 )$.
In view of Lemma \ref{lem:phi(b)-regularite}, $\eT(\rmb)$ is a $C^1$-diffeomorphism.
Thus, we have  $\eta \circ \eT(\rmb) \in H^1 (\Omega_0^c)$ and $\nabla ( \eta \circ \eT(\rmb) ) = \big((\nabla \eta ) \circ \eT(\rmb) \big) \nabla \eT(\rmb)$, where $\nabla \eT(\rmb)$ is bounded in $H^{2}(\Omega_0^c)$ and then in $L^{\infty}(\Omega_0^c)$. 
It follows that for all $\rmb \in B_{\M}$, 
\begin{equation}
\label{eq:point-fixe:04}
\lVert \eta \circ \eT(\rmb) \rVert_{1,2, \Omega_0^c} 
\leq C 
\lVert  \eta   \rVert_{1,2, \RR^2 },
\end{equation}
for all $\eta \in H^1 ( \RR^2 )$, where $C$ is a positive constant depending on $\Omega_0$, $C_1$, and $C_2$.

\smallskip
Furthermore, we recall a useful calculus property called \emph{Piola's identity} (see e.g., \cite{ciarlet_three-dimensional_1988}). 
For $ 1\leq \dn < p$, and $\Psi \in ( W^{2,p} )^\dn$, we have
\begin{equation}
\label{eq:Piola-identity}
\div \left( \cof \gd \Psi \right) = 0 . 
\end{equation}

\subsection{Fixed point procedure.}\label{subsection:FixPointProc}
The proof of Theorem \ref{thm:FSI:pt-fixe} for the existence and uniqueness of the solution of the FSI problem \eqref{eq:complete-syst-stokes-stokes-transported} relies on a fixed point argument that we present in this subsection, by first considering the two following problems. 
\begin{enumerate}[label=\textbf{\arabic*.}]
\item Let $f\in ( H^2 (\RR^2 ))^2$, and $(\vcT(\rmb) , \prT (\rmb)) $ be the solution of the system
\begin{equation}
\label{pbm:stokes-displacement-fixed:strong}
\left\lbrace
\begin{aligned}
-\nu \div( ( \nabla  \vcT(\rmb) ) F(\rmb) ) +  G(\rmb) \nabla \prT (\rmb) & =  J(\rmb)( f \circ \eT(\rmb) ) && \text{ in } \Omega_0^c ,  \\
\div ( G(\rmb)^T \vcT(\rmb) ) & =  0 && \text{ in }  \Omega_0^c, \\
\vcT(\rmb) & =  0 && \text{ on }  \partial \Omega_0^c ,
\end{aligned}
\right.
\end{equation}
where the maps $J$, $G$ and $F$ are defined  by
\eqref{def:J(T)_2}--\eqref{def:F(T)_2}.

\smallskip
\item Let $g\in ( H^1 (\Omega_0))^2$, and $(\dsp(\rmb) , \LagVol(\rmb))$ be the solution of the system
\begin{equation}
\label{pbm:elasstokes-displacement-fixed:strong}
\left\lbrace
\begin{aligned}
-  \mu \div ( \nabla \dsp(\rmb) ) + \nabla \LagVol(\rmb)  & =  g    && \text{ in } \Omega_0 ,\\
\div \dsp(\rmb) & =  0  && \text{ in } \Omega_0 ,\\
\dsp(\rmb) & =  0  && \text{ on } \Gamma_{\omega} , \\
(   \mu  \nabla \dsp(\rmb)   - \LagVol(\rmb) \Id ) n_0  & =  ( \nu \nabla \vcT(\rmb) F(\rmb) - \prT (\rmb) G(\rmb)  ) n_0  && \text{ on } \Gamma_0 .
\end{aligned}
\right.
\end{equation}

\end{enumerate}

For a fixed $ \rmb $ small enough, we will show that the problem \eqref{pbm:stokes-displacement-fixed:strong} admits a unique solution $(\vcT(\rmb) , \prT (\rmb)) $, and then that the problem \eqref{pbm:elasstokes-displacement-fixed:strong} depending on $(\vcT(\rmb) , \prT (\rmb)) $ admits also a unique solution denoted by $(\dsp(\rmb) , \LagVol(\rmb))$
In particular we will see that $\dsp ( \rmb ) $ belongs to $H^3(\Omega_0)$.
Thus we will be able to define  a map 
\begin{equation}
\label{eq:def:contraction}
\begin{array}{rccc}
\contract  : & B_{\M}  & \longrightarrow & ( H^3(\Omega_{0}) )^2 \\
& \rmb & \longmapsto &  \dsp ( \rmb ) 
\end{array},
\end{equation}
and we will show in Section \ref{sec:S_constraction} that this map is actually a contraction, so that we can apply the Banach Fixed Point Theorem, and deduce that the solution we search for the FSI problem is unique and is given by the fixed point of $\contract$. \\

In the next subsection, we present some useful results for the resolution of problems \eqref{pbm:stokes-displacement-fixed:strong} and \eqref{pbm:elasstokes-displacement-fixed:strong}, which are investigated in \S \ref{sec:resol_fluid-pbm} and \S\ref{sec:resol_structure-pbm}, respectively. The last part of the section, \S \ref{sec:S_constraction}, is devoted to the proof of the contraction character of $\contract$.

\subsection{Resolution of the fluid problem}
\label{sec:resol_fluid-pbm}
Problem \eqref{pbm:stokes-displacement-fixed:strong} is a slightly perturbed incompressible Stokes problem with non-slip boundary condition.
Let us introduce the null mean-value pressure space
\begin{equation}
L_{0}^{2} ( \Omega_{0}^c )= \left\lbrace q \in L^{2} ( \Omega_{0}^c )
\ \Big\vert \   \int_{\Omega_{0}^c} q \mathrm{d}x = 0  \right\rbrace .
\end{equation}  
The following result extends the standard, and well-known, Stokes existence and uniqueness result for the fluid problem \eqref{pbm:stokes-displacement-fixed:strong}.

\begin{thm}
\label{thm:cattabriga-perturbe}

Let $\Omega$ be a bounded domain of $\mathbb{R}^{2}$, having a $C^{3}$ boundary $\partial\Omega$. 
Let $f_F\in ( H^{1}(\Omega) )^2 $, $ h_F  \in  H^{2}(\Omega)$ be such that 
\begin{equation}
\label{eq:compatibility-condition:2}
\int_{\Omega}  h_F  \mathrm{d}x =0 ,
\end{equation}
and $\rmA,\rmB \in (H^{2}(\Omega))^{2 \times 2}$ be two matrix fields. 
We assume that there exists $\psi\in (H^{3}(\Omega ))^{2}$ such that 
\begin{equation}
\rmB = \cof\nabla\psi .
\end{equation}

There exists a positive constant $C_{\mrm{pert}}$ such that, if
\begin{equation}
\label{eq:stokes-norm-proche-idt}
\lVert \Id - \rmA \rVert_{(H^{2}(\Omega))^{2 \times 2}} \leq C_{\mrm{pert}},
\quad \text{ and } \quad 
\lVert \Id - \rmB \rVert_{(H^{2}(\Omega))^{2 \times 2}} \leq C_{\mrm{pert}},
\end{equation}
then there exists a unique solution $(v,p) \in (H^1_0(\Omega) \cap H^{3}(\Omega) )^{2} \times ( L^{2}_0(\Omega) \cap H^{2}(\Omega) ) $ of the perturbed Stokes system: 
\begin{equation}
\label{syst:stokes-perturb-v}
\left\lbrace
\begin{aligned}
-\nu \div ( (\gd v) \rmA ) + \rmB \nabla p &= f_F && \text{ in } \Omega , \\
\div ( \rmB ^{\tp} v ) &=  h_F  && \text{ in } \Omega , \\
v &= 0  && \text{ on } \partial\Omega ,
\end{aligned}
\right.
\end{equation}
and there exists a positive constant $C_{\mrm{F}}$ such that
\begin{equation}
\label{eq:a-p-estimate_stokes_pert}
\lVert v \rVert_{3,2,\Omega} + \lVert p \rVert_{2,2,\Omega}
\leq
C_{\mrm{F}}
( \lVert f_F \rVert_{1,2,\Omega}  
+ \lVert  h_F  \rVert_{2,2,\Omega}
  ).
\end{equation}
\end{thm}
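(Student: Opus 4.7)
I would view Problem~\eqref{syst:stokes-perturb-v} as a small perturbation of the classical incompressible Stokes system on $\Omega$ and solve it by a Banach fixed-point argument (equivalently, a Neumann-series argument around the Stokes operator) based on the smallness assumption \eqref{eq:stokes-norm-proche-idt}. The cornerstone is Cattabriga's classical $H^{3}$-regularity theorem, which I take as known: since $\partial\Omega\in C^{3}$, the map $(v,p)\mapsto(-\nu\Delta v+\nabla p,\,\div v)$ is an isomorphism from $(H^{1}_{0}(\Omega)\cap H^{3}(\Omega))^{2}\times(L^{2}_{0}(\Omega)\cap H^{2}(\Omega))$ onto $(H^{1}(\Omega))^{2}\times\{h\in H^{2}(\Omega):\int_{\Omega}h\,dx=0\}$; denote by $C_{\mrm{Cat}}$ the operator norm of its inverse.

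\textbf{Reformulation as a fixed point.} By adding and subtracting the identity, \eqref{syst:stokes-perturb-v} is equivalent to the standard Stokes problem with modified right-hand side
\begin{equation*}
\left\lbrace
\begin{aligned}
-\nu\Delta v+\nabla p &= f_{F}-\nu\,\div\!\bigl((\nabla v)(\Id-\rmA)\bigr)+(\Id-\rmB)\nabla p,\\
\div v &= h_{F}-\div\!\bigl((\rmB^{\tp}-\Id)v\bigr).
\end{aligned}
\right.
\end{equation*}
I would therefore define the map $\Phi:(\tilde v,\tilde p)\mapsto(v,p)$, where $(v,p)$ is the Cattabriga solution of the standard Stokes problem whose right-hand side is obtained by substituting $(\tilde v,\tilde p)$ in the perturbation terms above, and work in the closed ball of $(H^{1}_{0}\cap H^{3})^{2}\times(L^{2}_{0}\cap H^{2})$ of radius $2C_{\mrm{Cat}}(\lV f_{F}\rV_{1,2}+\lV h_{F}\rV_{2,2})$.

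\textbf{Regularity and compatibility.} Two points must be verified before invoking Cattabriga. First, the compatibility condition that the mean of the perturbed divergence source vanishes follows from \eqref{eq:compatibility-condition:2} and the divergence theorem, since $\tilde v=0$ on $\partial\Omega$. Second, and more delicate, that perturbed divergence source must lie in $H^{2}(\Omega)$. Naively $\rmB\in H^{2}$ and $\tilde v\in H^{3}$ only give $(\rmB^{\tp}-\Id)\tilde v\in H^{2}$ by \eqref{eq:lem-algebra1}, hence its divergence in $H^{1}$ only, which is insufficient. The key point is Piola's identity \eqref{eq:Piola-identity}: since $\rmB=\cof\nabla\psi$, a componentwise expansion gives
\begin{equation*}
\div(\rmB^{\tp}\tilde v)=\partial_{i}(B_{ji}\tilde v_{j})=(\partial_{i}B_{ji})\tilde v_{j}+B_{ji}\,\partial_{i}\tilde v_{j},
\end{equation*}
where the first term vanishes by Piola. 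What remains is a pointwise bilinear expression in $\rmB$ and $\nabla\tilde v$, i.e.\ a product of two $H^{2}$ functions, which by Lemma~\ref{lem:algebra} belongs to $H^{2}(\Omega)$ with norm controlled by $C_{\mrm{a}}\lV\rmB\rV_{2,2}\lV\tilde v\rV_{3,2}$. The two perturbative contributions in the momentum equation are easier: $(\nabla\tilde v)(\Id-\rmA)\in H^{2}$ by \eqref{eq:lem-algebra1}, so its divergence is in $H^{1}$, and $(\Id-\rmB)\nabla\tilde p\in H^{1}$ by \eqref{eq:lem-algebra2}.

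\textbf{Contraction and conclusion.} Applying Cattabriga to $\Phi(\tilde v,\tilde p)$ and inserting the three algebra bounds produces an estimate of the form
\begin{equation*}
\lV v\rV_{3,2}+\lV p\rV_{2,2}\leq C_{\mrm{Cat}}\Bigl(\lV f_{F}\rV_{1,2}+\lV h_{F}\rV_{2,2}+C'\bigl(\lV\Id-\rmA\rV_{2,2}+\lV\Id-\rmB\rV_{2,2}\bigr)(\lV\tilde v\rV_{3,2}+\lV\tilde p\rV_{2,2})\Bigr),
\end{equation*}
with $C'$ depending only on $C_{\mrm{a}}$, $\nu$ and $\lV\psi\rV_{3,2}$, and the same manipulation for the difference of two images yields a Lipschitz estimate with the same prefactor. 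Choosing $C_{\mrm{pert}}$ so that $2C_{\mrm{Cat}}C'C_{\mrm{pert}}\leq 1/2$ makes $\Phi$ a strict contraction on the chosen ball; Banach's theorem supplies the unique fixed point, which is the desired solution, and absorbing the contractive term into the left-hand side gives \eqref{eq:a-p-estimate_stokes_pert} with $C_{\mrm{F}}=2C_{\mrm{Cat}}$. The genuinely non-routine step---and the main obstacle---is the Piola-identity regularity gain for the perturbed divergence: without the assumption $\rmB=\cof\nabla\psi$ the $H^{2}$-smallness of $\rmB-\Id$ alone would not close the estimate, and one would be forced to require the unavailable regularity $\rmB\in H^{3}$.
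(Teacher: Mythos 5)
Your proposal is correct and follows essentially the same route as the paper, which does not reprove the theorem but refers to Grandmont's argument: a Banach fixed-point (perturbation) scheme around the classical Cattabriga $H^3\times H^2$ Stokes regularity, with the smallness conditions \eqref{eq:stokes-norm-proche-idt} closing the contraction. Your treatment of the divergence source via Piola's identity and Lemma \ref{lem:algebra} is exactly the mechanism the paper itself relies on (cf. the handling of $h_F$ in the proof of Theorem \ref{thm:FSI:pt-fixe}), so no substantive difference remains.
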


We refer to \cite{grandmont2002existence} where the proof of this result is entirely given.
The demonstration relies on a fixed point argument -- leading to conditions \eqref{eq:stokes-norm-proche-idt} --, and on the classical regularity result of Stokes problem
(see e.g. \cite{boyer_mathematical_2013}). 
For a complete proof of well-posedness and regularity for Stokes problem, we may refer to \cite{cattabriga1961su_un_problema} for the $3$-dimensional case, and 
to \cite[Proposition 2.3 p. 35]{temam1984NavierStokesequations} for the $2$-dimensional case. A complete development on these questions is carried out in \cite{galdi2011introduction}.

\subsection{Resolution of the structure problem}
\label{sec:resol_structure-pbm}

Now we solve the structure problem \eqref{pbm:elasstokes-displacement-fixed:strong}. 
Under the assumptions we have made, the elastic material is incompressible, and the state equation is linear, so that it can be described by Stokes-like  equations. 
The only difference with the behaviour of the fluid is that we do not impose a non-slip boundary condition on $\Gamma_0$ for the structure displacement $\dsp$.
Instead of that, the equilibrium of the surface forces leads to a \emph{stress boundary condition} on $\Gamma_0$.

Usually, the Dirichlet condition for the Stokes problem implies that we have a solution for which the pressure field is defined up to a constant (which is often chosen such that the pressure has a zero mean value), whereas pure Neumann or pure stress condition brings to a solution for which the velocity field is defined up to a constant. 
In the case of a mixed boundary condition, i.e with Dirichlet condition on a part of the boundary and stress condition on the rest of the boundary, we will note that the velocity together with the pressure are completely determined, and no zero mean value has to be imposed for the pressure. 

Given a bounded connected open set $\Opn$ in $\RR^2$ with boundary $\Gamma$ and given a connected open subset $\omega \Ksub \Opn$ with boundary $\Gamma_\omega$, we define the annular domain 
\begin{equation}
\label{def:Omega:elast}
\Omega := \Opn \setminus \overline{\omega},
\end{equation}
with boundary $\partial \Omega = \Gamma \cup \Gamma_\omega$.
For such a domain, we introduce the subspace of $H^1$ functions, vanishing on the boundary component $\Gamma_\omega$: we set
\begin{equation}
\label{eq:def:H1_0_omega}
H^1_{0,\Gamma_\omega}(\Omega) := 
\lbrace u\in   H^1(\Omega)    \sep u = 0 \text{ on } \Gamma_\omega 
 \rbrace .
\end{equation}
We give an existence, uniqueness and regularity result for the solution to the structure problem when the stress boundary condition on $\Gamma$ is given.

\begin{thm}
\label{thm:resol_stokes_mixedBC}
Let $\Omega$ be a domain of the form \eqref{def:Omega:elast} with boundary components $\Gamma$ and $\Gamma_\omega$ of class $C^{3,1}$, 
and let $( g , h_S , f_b ) \in ( H^1(\Omega) )^{2}  \times
H^2(\Omega) \times
( H^{3/2}(\Gamma ) )^{2}$. 
Let $\rmA$, $\rmB \in (H^{2}(\Omega))^{2 \times 2}$ such that there exists $\psi\in (H^{3}(\Omega ))^{2}$ satisfying
\begin{equation}
\rmB = \cof\nabla\psi .
\end{equation}
There exists a positive constant $C_{\mrm{pert},2}$ such that, if
\begin{equation}
\label{eq:stokes-norm-proche-idt:2}
\lVert \Id - \rmA \rVert_{(H^{2}(\Omega))^{2 \times 2}} \leq C_{\mrm{pert},2},
\quad \text{ and } \quad 
\lVert \Id - \rmB \rVert_{(H^{2}(\Omega))^{2 \times 2}} \leq C_{\mrm{pert},2},
\end{equation}
then there exists a unique pair $(w, s)$ in $ (H^1_{0,\Gamma_\omega}(\Omega) \cap H^{3} (\Omega) )^2  \times H^2(\Omega)$ solution of the problem:
\begin{equation}
\label{eq:stokes_mixedBC:1}
\left\lbrace
\begin{aligned}
- \mu \div ( (\gd w) \rmA ) + \rmB \gd s & =  g  && \text{ in } \Omega  , \\
\div ( \rmB^\top w ) & = h_S  && \text{ in } \Omega , \\
w & = 0 && \text{ on } \Gamma_\omega  , \\
( \mu (\gd w) \rmA - s \rmB ) \nv &= f_b && \text{ on } \Gamma  ,
\end{aligned}
\right.
\end{equation}
where $\nv$ is the outward normal vector to $\Gamma$. 
Furthermore, there exists a positive constant $C_{\mrm{s}}$ depending only on $\Omega$ such that
\begin{equation}
\lV w \rV_{3,2,\Omega} + \lV s \rV_{2,2,\Omega} 
\leq C_{\mrm{s}} (
\lV  g  \rV_{1,2,\Omega}  
+ \lV  h_S  \rV_{2,2,\Omega}
+ \lV f_b \rV_{H^{3/2}(\Gamma)}
).
\end{equation}
\end{thm}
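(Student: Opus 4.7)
The strategy mirrors the approach used for Theorem \ref{thm:cattabriga-perturbe}: I would first solve the unperturbed analogue of \eqref{eq:stokes_mixedBC:1}, obtained by setting $\rmA = \rmB = \Id$, and then absorb the perturbation terms via a Banach fixed-point argument made effective by the smallness hypothesis \eqref{eq:stokes-norm-proche-idt:2}.

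\textbf{Step 1 (unperturbed problem).} With $\rmA = \rmB = \Id$, the system becomes a Stokes-like problem with mixed Dirichlet/stress boundary conditions:
\begin{equation*}
-\mu \Delta w + \gd s = g, \qquad \div w = h_S \text{ in } \Omega, \qquad w = 0 \text{ on } \Gamma_\omega, \qquad (\mu \gd w - s \Id)\nv = f_b \text{ on } \Gamma.
\end{equation*}
The natural variational formulation is a saddle-point problem on $V \times L^2(\Omega)$ with $V := (H^1_{0,\Gamma_\omega}(\Omega))^2$, involving $a(w,v) = \mu \int_\Omega \gd w : \gd v$ and $b(v,q) = -\int_\Omega q \div v$. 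Coercivity of $a$ on $V$ follows from Poincaré's inequality, applicable because $\Gamma_\omega$ has positive surface measure. The inf-sup condition for $b$ holds on $V \times L^2(\Omega)$ \emph{without} any zero-mean restriction on the pressure: since test velocities need not vanish on $\Gamma$, one has enough freedom to reach the whole of $L^2(\Omega)$ (typically established via an auxiliary Neumann problem). Brezzi's theorem yields a unique weak solution, and elliptic regularity for mixed-BC Stokes systems on $C^{3,1}$ boundaries upgrades it to $(w,s) \in (H^1_{0,\Gamma_\omega} \cap H^3(\Omega))^2 \times H^2(\Omega)$, together with the continuous dependence
\begin{equation*}
\lV w \rV_{3,2,\Omega} + \lV s \rV_{2,2,\Omega} \leq C\bigl(\lV g \rV_{1,2,\Omega} + \lV h_S \rV_{2,2,\Omega} + \lV f_b \rV_{H^{3/2}(\Gamma)}\bigr).
\end{equation*}

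\textbf{Step 2 (perturbation via fixed point).} I would then rewrite \eqref{eq:stokes_mixedBC:1} by isolating the unperturbed operator on the left-hand side and moving the $(\rmA - \Id)$ and $(\rmB - \Id)$ contributions to the right, so that the system takes the Step 1 form with modified data
\begin{align*}
g_\star &= g + \mu \div((\gd w)(\rmA - \Id)) + (\Id - \rmB)\gd s,\\
h_\star &= h_S + \div((\Id - \rmB^\tp) w),\\
f_\star &= f_b + \mu(\gd w)(\Id - \rmA)\nv + s(\rmB - \Id)\nv.
\end{align*}
Setting $X := (H^1_{0,\Gamma_\omega}(\Omega) \cap H^3(\Omega))^2 \times H^2(\Omega)$, I define the map $\mathcal{T}: X \to X$ sending $(\tilde w,\tilde s)$ to the Step 1 solution with data $(g_\star, h_\star, f_\star)$ evaluated at $(\tilde w, \tilde s)$. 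Using the $H^2$-algebra inequality of Lemma \ref{lem:algebra} together with standard trace bounds, each perturbation contribution is controlled in the appropriate target norm by $C(\lV \Id - \rmA \rV_{2,2} + \lV \Id - \rmB \rV_{2,2})(\lV \tilde w \rV_{3,2} + \lV \tilde s \rV_{2,2})$. Combined with the Step 1 a priori estimate, $\mathcal{T}$ is Lipschitz on $X$ with constant proportional to $\lV \Id - \rmA \rV_{2,2} + \lV \Id - \rmB \rV_{2,2}$. Choosing $C_{\mrm{pert},2}$ small enough makes $\mathcal{T}$ a strict contraction, and Banach's fixed-point theorem provides the unique solution in $X$; the final estimate \eqref{eq:stokes_mixedBC:1} follows by applying the Step 1 bound at the fixed point.

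\textbf{Main obstacle.} The principal technical point is the full $H^3 \times H^2$ regularity for the unperturbed mixed-BC Stokes problem. The annular geometry helps crucially: $\Gamma$ and $\Gamma_\omega$ are disjoint connected components of $\partial \Omega$, so there are no corners or interfaces separating the Dirichlet and stress portions of the boundary. One can lift $f_b \in H^{3/2}(\Gamma)$ to reduce to a homogeneous stress condition on $\Gamma$, and then combine interior regularity with separate boundary regularity near each component, each of the latter being a standard Stokes regularity statement with a single pure-type boundary condition. This reduction, together with the verification of the inf-sup condition allowing the pressure to live in the full $L^2(\Omega)$, is the technical heart of the argument.
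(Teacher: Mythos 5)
Your overall route is the same as the paper's: the paper handles the case $\rmA=\rmB=\Id$ by a weak mixed-boundary-condition formulation with the pressure in the full space $L^2(\Omega)$ (citing \cite{calisti2021}) plus a regularity argument in the spirit of \cite[Section IV.7]{boyer_mathematical_2013}, and then treats the perturbed system ``in exactly the same way as Theorem \ref{thm:cattabriga-perturbe}'', i.e.\ by the contraction argument you describe in your Step 2. Your Step 1, including the remark that no zero-mean condition on $s$ is needed and that the disjointness of $\Gamma$ and $\Gamma_\omega$ removes any Dirichlet/stress interface issue, matches this.

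There is, however, one step in your Step 2 that fails as written, and it is exactly the point where the unused hypothesis $\rmB=\cof\gd\psi$ must enter. You claim that the modified divergence datum $h_\star=h_S+\div\big((\Id-\rmB^{\tp})\tilde w\big)$ is controlled in $H^2(\Omega)$ by Lemma \ref{lem:algebra}. But $(\Id-\rmB^{\tp})\tilde w$ only lies in $H^2(\Omega)$, so its divergence is a priori only in $H^1$: expanding, $\div(\rmB^{\tp}\tilde w)=(\div\rmB)\cdot\tilde w+\rmB\Mscalp\gd\tilde w$, and the term $(\div\rmB)\cdot\tilde w$ involves first derivatives of $\rmB\in H^2$, hence is only $H^1$ with no bound of the required form. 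Since the Step 1 solution operator needs the divergence datum in $H^2$ to produce $H^3\times H^2$ regularity, the contraction in $X$ cannot be closed this way. The repair is Piola's identity \eqref{eq:Piola-identity}: because $\rmB=\cof\gd\psi$, one has $\div\rmB=0$, hence $\div\big((\Id-\rmB^{\tp})\tilde w\big)=(\Id-\rmB)\Mscalp\gd\tilde w$, a product of $H^2$ functions, which Lemma \ref{lem:algebra} bounds by $C\lV\Id-\rmB\rV_{2,2}\lV\tilde w\rV_{3,2}$. This is the same device the paper uses for $h_F$ in the proof of Theorem \ref{thm:FSI:pt-fixe}; with it inserted, your fixed-point argument closes and the rest of the proposal (including the treatment of $g_\star$ and of the boundary datum $f_\star$ via the algebra lemma and trace estimates) is sound.
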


Problem \eqref{eq:stokes_mixedBC:1} involves non standard boundary conditions of different types. 
In the case where $\rmA = \rmB = \Id$, a proof of the first part of Theorem \ref{thm:resol_stokes_mixedBC} for the existence of a unique weak solution  is given in \cite[Section 3.3.3]{calisti2021}, and the regularity result can be obtained following the approach presented in \cite[Section IV.7]{boyer_mathematical_2013} in the case where the stress boundary condition lies on the whole boundary $\partial \Omega $. 
From there, Theorem \ref{thm:resol_stokes_mixedBC} can be proved in exactly the same way as Theorem \ref{thm:cattabriga-perturbe}, with a fixed point argument.

\subsection{Proof of Theorem \ref{thm:FSI:pt-fixe}.}
\label{sec:S_constraction}

Now, we turn to the proof of Theorem \ref{thm:FSI:pt-fixe} for the existence and uniqueness of the solution of the FSI problem \eqref{eq:complete-syst-stokes-stokes-transported} by means of the fixed point procedure introduced in subsection \ref{subsection:FixPointProc}.
From now on, we will denote by $C$ any generic positive constant depending only on $\Omega_0$ and on the constants $C_1$ and $C_2$ appearing in inequalities~\eqref{eq:point-fixe:05bis} and \eqref{eq:point-fixe:05}. 
The proof is divided into 3 steps.

\smallskip\noindent
\textbullet\textit{Step 1: continuity of the fluid problem.}
We start to prove that Problem \eqref{pbm:stokes-displacement-fixed:strong} possesses a unique solution. 
We have that $G( 0 ) = F( 0 ) = \Id$.
For $\rmb \in B_{\M}$ (see \eqref{eq:def:Bp}), we deduce from Lemma \ref{lem:regu:JGF} that if $\M$ is small enough, we have then that $G (\rmb)$ and $F (\rmb)$ are such that
$\lVert \Id - F(\rmb) \rVert_{(H^{2}(\Omega))^{2 \times 2}} \leq C_{\mrm{pert}}$ and $\lVert \Id - G(\rmb) \rVert_{(H^{2}(\Omega))^{2 \times 2}} \leq C_{\mrm{pert}}$, 
where $C_{\mrm{pert}}>0$ is the positive  constant from inequalities \eqref{eq:stokes-norm-proche-idt} of Theorem \ref{thm:cattabriga-perturbe}. 
Moreover, 
from Lemma \ref{lem:phi(b)-regularite}  we know that $\eT(\rmb)$ is a $C^1$-diffeomorphism and consequently $f \circ \eT(\rmb)\in \left(H^1(\Omega_0^c)\right)^2$. Since $J(\rmb)\in H^2(\Omega_0^c)$, we deduce from \eqref{eq:lem-algebra1} in Lemma \ref{lem:algebra} that 
$J(\rmb)(f\circ T(\rmb))\in \left(H^1(\Omega_0^c)\right)^2$.
Thus, we can apply Theorem \ref{thm:cattabriga-perturbe} with $f_F = J(\rmb) f \circ \eT(\rmb)$ and  $h_F\equiv 0$ for Problem \eqref{pbm:stokes-displacement-fixed:strong}: 
for all $\rmb \in B_{\M}$ with $\M$ small enough, Problem \eqref{pbm:stokes-displacement-fixed:strong} admits a unique solution $( \vcT(\rmb) , \prT(\rmb)) \in (H^1_0(\Omega_0^c) \cap H^{3}(\Omega_0^c) )^{2} \times ( L^{2}_0(\Omega_0^c) \cap H^{2}(\Omega_0^c) )$,
satisfying the following estimate:
\begin{equation}
\label{eq:point-fixe:-1}
\lVert \vcT(\rmb) \rVert_{3,2,\Omega} + \lVert \prT(\rmb) \rVert_{2,2,\Omega}
\leq
C_{\mrm{F}}
 \lVert J(\rmb)( f \circ \eT(\rmb) )  \rVert_{1,2,\Omega}   .    
\end{equation}

\medskip
Now, we prove a continuity property for the solutions of Problem \eqref{pbm:stokes-displacement-fixed:strong}.
Let  then $( \vcT(\rmb_1) , \prT(\rmb_1))$ and $( \vcT(\rmb_2) , \prT(\rmb_2))$ be the solutions of Problem \eqref{pbm:stokes-displacement-fixed:strong} for respectively $\rmb_1$ and  
$\rmb_2$ in $B_{\M}$.
%
We set $\boldsymbol{\delta} \vcT := \vcT(\rmb_1) - \vcT(\rmb_2) $ and $\boldsymbol{\delta} \prT := \prT(\rmb_1) - \prT(\rmb_2) $. 
We want to estimate $
\lVert \boldsymbol{\delta} \vcT \rVert_{3,2, \Omega_0^c}$ and $ \lVert \boldsymbol{\delta} \prT \rVert_{2,2, \Omega_0^c}$
with respect to the difference $ \lVert \rmb_1 -\rmb_2 \rVert_{3,2, \Omega_0}$.
In view of \eqref{pbm:stokes-displacement-fixed:strong}, by difference, we infer that the pair $(\boldsymbol{\delta} \vcT, \boldsymbol{\delta} \prT)$  solves
\begin{equation}
\label{syst:delta-v}
\left\lbrace
\begin{aligned}
-\nu\div ( \nabla ( \boldsymbol{\delta} \vcT  ) F( \rmb_1 ) ) + G( \rmb_1 ) \nabla \boldsymbol{\delta} \prT 
&=  f_F && \text{ in }  \Omega^c_0 , \\
\div ( G( \rmb_1 )^{\tp} \boldsymbol{\delta} \vcT )  
&= h_F  && \text{ in }  \Omega^c_0 , \\
\boldsymbol{\delta} \vcT  &= 0 && \text{ on } \partial\Omega^c_0  ,
\end{aligned}
\right.
\end{equation}
where now $ f_F $ and $ h_F$ are defined by 
\begin{align}
f_F
& :=  J(\rmb_1) f \circ \eT(\rmb_1) -  J(\rmb_2) f \circ \eT(\rmb_2)  
+ \nu \div( \nabla (\vcT( \rmb_2 )) ( F( \rmb_1 ) - F( \rmb_2 )  ) ) \notag \\
& \hspace{3ex}- ( G( \rmb_1 ) - G( \rmb_2 ) ) \nabla \prT ( \rmb_2 ) ,
\label{eq:point-fixe:00} \\
h_F 
& := - \div ( ( G( \rmb_1 ) - G( \rmb_2 ) )^{\tp} \vcT (\rmb_2) )  
\label{eq:point-fixe:01} .
\end{align}

The compatibility condition \eqref{eq:compatibility-condition:2} for $h_F$ is valid because of the homogeneous Dirichlet condition satisfied by $\vcT (\rmb_2)$.
In view of the regularity of $\rmb_1$, $\rmb_2$, $\vcT (\rmb_2)$ and $\prT ( \rmb_2 )$,  
we can apply Theorem \ref{thm:cattabriga-perturbe}
for Problem \eqref{syst:delta-v}. 
Indeed, from Piola's identity \eqref{eq:Piola-identity}, we have that $h_F=-\div ( ( G( \rmb_1 ) - G( \rmb_2 ) )^{\tp} \vcT (\rmb_2) ) = -( G( \rmb_1 ) - G( \rmb_2 ) ) \cdot \nabla  \vcT (\rmb_2)$, which belongs to $H^2(\Omega_0^c)$ thanks to Lemma \ref{lem:algebra}. 
Still from Lemma \ref{lem:algebra}, we directly have that $\div( \nabla (\vcT( \rmb_2 )) ( F( \rmb_1 ) - F( \rmb_2 )  ) )$ is in $H^1(\Omega_0^c)$. 
From the second part \eqref{eq:lem-algebra2} of Lemma \ref{lem:algebra}, $ ( G( \rmb_1 ) - G( \rmb_2 ) ) \nabla \prT ( \rmb_2 )$ belongs to $H^1(\Omega_0^c)$. 
As a result from \eqref{eq:point-fixe:00}, we deduce that $f_F\in ( H^1(\Omega_0^c) )^2$ and we can apply Theorem \ref{thm:cattabriga-perturbe} for Problem \eqref{syst:delta-v}.
Thus, 
for all $\rmb_1$, $\rmb_2$ in $B_{\M}$, 
the solution $( \boldsymbol{\delta} \vcT , \boldsymbol{\delta} \prT )$ of Problem \eqref{syst:delta-v}  belongs to $( H^1_0(\Omega_0^c) \cap H^3(\Omega_0^c) )^2  \times (  L^2_0(\Omega_0^c) \cap H^2(\Omega_0^c) )$ and satisfies 
\begin{equation}
\lVert \boldsymbol{\delta} \vcT  \rVert_{3,2,\Omega_0^c} + \lVert \boldsymbol{\delta} \prT  \rVert_{2,2,\Omega_0^c}
\leq
C_{\mrm{F}} \left(  \lVert f_F   \rVert_{1,2,\Omega_0^c}  + \lVert h_F   \rVert_{2,2,\Omega_0^c}  \right) .
\label{eq:point-fixe:000} 
\end{equation}

\medskip
Let us first estimate the term $ f_F $,
starting by considering the terms depending on $\vcT( \rmb_2 )$ and $\prT( \rmb_2 )$ in \eqref{eq:point-fixe:00}. 
From Theorem \ref{thm:cattabriga-perturbe} 
applied to problem \eqref{pbm:stokes-displacement-fixed:strong} written for $\rmb_2$, we have the estimate
\begin{equation}
\label{eq:point-fixe:02}
\lVert \gd \vcT(\rmb_2) \rVert_{2,2,\Omega_0^c,} 
+ \lVert  \gd \prT (\rmb_2) \rVert_{1,2, \Omega_0^c} 
\leq C_{\mrm{F}}   \lVert   J(\rmb_2)( f \circ \eT(\rmb_2) ) \rVert_{1,2,\Omega_0^c}.
\end{equation}

In view of Lemma \ref{lem:algebra}, and inequalities \eqref{eq:point-fixe:05bis} and \eqref{eq:point-fixe:04}, we have,  up to taking a smaller $\mathcal M$,
\begin{equation}
\label{eq:point-fixe:18}
\lVert   J(\rmb_2)( f \circ \eT(\rmb_2) ) \rVert_{1,2, \Omega_0^c} 
\leq C 
C_{\mrm{a}}  \lVert f \rVert_{1,2, \RR^2}.
\end{equation}
From Lemma \ref{lem:algebra} with \eqref{eq:point-fixe:02}, \eqref{eq:point-fixe:18} and \eqref{eq:point-fixe:12}, we deduce:  
\begin{align}
\lVert \gd \vcT(\rmb_2) ( F(\rmb_1) - F(\rmb_2) ) \rVert_{2,2, \Omega_0^c} 
&\leq C_{\mrm{a}} 
\lVert \gd \vcT(\rmb_2) \rVert_{2,2, \Omega_0^c} 
\lVert ( F(\rmb_1) - F(\rmb_2) ) \rVert_{2,2, \Omega_0^c}  \notag \\
&\leq C 
C_{\mrm{a}}^2 C_{\mrm{F}}   
\lVert f   \rVert_{1,2, \RR^2} 
  \lVert DF \rVert_{\mathcal{M}} \lVert \rmb_1 - \rmb_2 \rVert_{3,2, \Omega_0}   ,
\label{eq:point-fixe:1}
\end{align}
and similarly we find 
\begin{equation}
\lVert  ( G(\rmb_1) - G(\rmb_2) ) \gd \prT (\rmb_2) \rVert_{1,2, \Omega_0^c} 
\leq   C
C_{\mrm{a}}^2 C_{\mrm{F}}   
\lVert f   \rVert_{1,2, \RR^2 } 
  \lVert DG \rVert_{\mathcal{M}} \lVert \rmb_1 - \rmb_2 \rVert_{3,2, \Omega_0} .
\label{eq:point-fixe:2}
\end{equation}

In order to obtain a bound for $f_F$, we also need to treat the first two terms in the right-hand side of \eqref{eq:point-fixe:00}, which we rewrite as follows:
\begin{align}
\lVert  J(\rmb_1) f \circ \eT(\rmb_1) -  J(\rmb_2) f \circ \eT(\rmb_2) \rVert_{1,2, \Omega_0^c}
&\leq 
\lVert ( J(\rmb_1)  -  J(\rmb_2) )  f \circ \eT(\rmb_1) \rVert_{1,2, \Omega_0^c} \notag \\
&+
\lVert  J(\rmb_2) ( f \circ \eT(\rmb_1) - f \circ \eT(\rmb_2) \rVert_{1,2, \Omega_0^c} .
\label{eq:point-fixe:3} 
\end{align}
For the first term of the right-hand side of \eqref{eq:point-fixe:3}, 
we have from Lemma \ref{lem:algebra} 
and \eqref{eq:point-fixe:04}: 
\begin{align}
\lVert ( J(\rmb_1)  -  J(\rmb_2) )  f \circ \eT(\rmb_1) \rVert_{1,2, \Omega_0^c} 
\leq C
C_{\mrm{a}} 
\lVert f   \rVert_{1,2, \RR^2} 
\lVert DJ \rVert_{\mathcal{M}} \lVert \rmb_1 - \rmb_2 \rVert_{3,2, \Omega_0}  .
\label{eq:point-fixe:4} 
\end{align}
For the second term of the right-hand side of \eqref{eq:point-fixe:3},
we rely on Lemma 5.3.9 from \cite{henrot_pierre2018shapo_geom}.  
Let us remark that it is at this stage, i.e. for the application of this Lemma, that we need more regularity for $f$ when normally $H^1$-regularity would have been enough to solve the fluid problem.
Indeed, this lemma states that if $f \in H^{2}(\mathbb{R}^2)$, then the map
\begin{equation}
( W^{1,\infty}(\RR^2 )  )^{2} \ni \theta  \mapsto  f  \circ ( \id_{\RR^2} + \theta ) \in H^1 (\mathbb{R}^2) 
\end{equation}
is of class $C^1$ in the vicinity of $0$, 
and the differential is given by $D( f  \circ ( \id_{\RR^2} + \theta ) ) \xi = ( \nabla  f  ) \circ( \id_{\RR^2} + \theta ) \cdot  \xi $ for all $\xi$ in $( W^{1,\infty}(\RR^2 )  )^{2}$. 
Yet we have that $\eT(\rmb)$ defined in \eqref{eq:FS_cdv_def1} can in fact be defined as $\eT(\rmb) = \id_{\RR^2} + \mathcal R  (  \gamma  ( \rmb ) )$ with  $B_{\M} \ni \rmb \mapsto  \mathcal R  (  \gamma  ( \rmb ) ) \in H^3 ( \RR^2 )$. 
From Sobolev embedding, we have that $( H^3( \RR^2 ))^2$ is continuously embedded into $( W^{1,\infty} (\RR^2) ) ^2$, and we denote by $C_{\infty}$ the embedding constant.
We also note that $\rmb \mapsto \eT(\rmb)$ is continuously affine and then smooth. 
As a consequence we have that the map
\begin{equation}
\label{eq:poin-fixe:19}
B_{\M} \ni \rmb \mapsto  f  \circ \eT(\rmb) \in ( H^1 (\mathbb{R}^2) )^2
\end{equation}
is well-defined and is of class $C^1$ in the vicinity of $0$.
Its differential is given by $D_{\rmb} ( f  \circ \eT(\rmb) ) \xi = ( \nabla  f  ) \circ \eT(\rmb) \cdot  \mathcal R  (  \gamma  ( \xi ) ) $ for all $\xi$ in $( H^3(\Omega_0 )  )^{2}$. 
In view of Lemma \ref{lem:algebra} with $f\in \left(H^2(\RR^2)\right)^2$, $D_{\rmb} ( f  \circ \eT(\rmb) ) \xi$ is indeed in $( H^1 (\mathbb{R}^2) )^2$ and
\begin{equation}
    \lVert D_{\rmb} ( f  \circ \eT(\rmb) ) \rVert_{\mathcal{L} ( ( H^3 (\Omega_0) )^2 , ( H^1 (\RR^2 ))^{2\times 2} ) }
    \leq
    C
    C_{\R\gamma} C_{\infty}
    \lVert ( \nabla f ) \circ \eT (\rmb) \rVert_{1,2,\RR^2} ,
\end{equation}
where $C_{\R\gamma}$ stands for the continuity constant of the operator $\R \circ \gamma$. 
Thus, for $f \in ( H^2( \RR^2 ) )^2$, we have
\begin{equation}
\lVert f \circ \eT(\rmb_1) - f \circ \eT(\rmb_2)  \rVert_{1,2, \Omega_0^c}
\leq 
C
C_{\R\gamma} C_{\infty}
\sup_{\rmb \in B_{\M}} \left\lbrace \lVert (\nabla f) \circ \eT(\rmb) ) \rVert_{1,2, \RR^2 }  \right\rbrace
\lVert \rmb_1 -\rmb_2 \rVert_{3,2, \Omega_0} .
\label{eq:point-fixe:5} 
\end{equation}
In light of \eqref{eq:point-fixe:04},  we have similarly for all $\rmb \in B_{\M}$:
\begin{align}
\lVert ( \gd f ) \circ \eT(\rmb)  \rVert_{1,2, \RR^2} 
\leq C
\lVert f \rVert_{2,2, \RR^2 } ,
\label{eq:point-fixe:6} 
\end{align}
and then by arguing in the same way as for \eqref{eq:point-fixe:18} we have
\begin{equation} 
\label{eq:point-fixe:06}
\lVert  J(\rmb_2) ( f \circ \eT(\rmb_1) - f \circ \eT(\rmb_2)) \rVert_{1,2, \Omega_0^c} 
\leq 
C
C_{\mrm{a}} C_{\R\gamma} C_{\infty}  \lVert f \rVert_{2,2, \RR^2 } \lVert \rmb_1 -\rmb_2 \rVert_{3,2,\Omega_0} .
\end{equation}

We recall that $f_F$ is given  by \eqref{eq:point-fixe:00}. 
We have completely estimated $\lVert  f_F   \rVert_{1,2}$ by combining \eqref{eq:point-fixe:1}, \eqref{eq:point-fixe:2}, \eqref{eq:point-fixe:4}, and \eqref{eq:point-fixe:06}.
We obtain
\begin{align}
\lVert f_F   \rVert_{1,2, \Omega_0^c} 
& \leq
\lVert f   \rVert_{1,2, \RR^2 } 
\left( C
C_{\mrm{a}}^2 C_{\mrm{F}} ( \nu \lVert DF \rVert_{\mathcal{M}} +
                           \lVert DG \rVert_{\mathcal{M}} )
     + C
     C_{\mrm{a}} \lVert DJ \rVert_{\mathcal{M}} \right) 
\lVert \rmb_1 - \rmb_2 \rVert_{3,2, \Omega_0} \notag \\
&+ C 
C_{\mrm{a}} C_{\R\gamma} C_{\infty} \lVert f \rVert_{2,2, \RR^2 } \lVert \rmb_1 - \rmb_2 \rVert_{3,2, \Omega_0} ,
\end{align}   
and finally we have a constant $C_{\bd{1}} = C_{\bd{1}} ( C_{\mrm{F}} , C_{\mrm{a}} , C_{\infty} , C_{\R\gamma} , \M )$ such that
\begin{equation}
\lVert f_F   \rVert_{1,2, \Omega_0^c} 
\leq
C_{\bd{1}}
\lVert f   \rVert_{2,2, \RR^2 }  
\lVert \rmb_1 - \rmb_2 \rVert_{3,2, \Omega_0} .
\label{eq:point-fixe:7}
\end{equation}  

\medskip
Let us now pass to the estimate of  $\lVert h_F \rVert_{2,2}$. We recall that in view of Piola's identity \eqref{eq:Piola-identity} we can write 
\begin{equation}
h_F  = - \div ( ( G( \rmb_1 ) - G( \rmb_2 ) )^{\tp} \vcT (\rmb_2) ) 
= - ( G( \rmb_1 ) - G( \rmb_2 ) ) \Mscalp  \gd \vcT(\rmb_2) ,
\end{equation}
so that in a same manner as for \eqref{eq:point-fixe:1}, we have 
\begin{align}
\lVert h_F \rVert_{2,2, \Omega_0^c}
& \leq  
C_{\mrm{a}} 
\lVert  G( \rmb_1 ) - G( \rmb_2 )  \rVert_{2,2, \Omega_0^c} \lVert  \gd \vcT(\rmb_2) \rVert_{2,2, \Omega_0^c} \notag\\
& \leq  
C
C_{\mrm{a}}^2  C_{\mrm{F}}
\lVert f   \rVert_{1,2, \RR^2 } 
\lVert DG \rVert_{\mathcal{M}} \lVert \rmb_1 - \rmb_2 \rVert_{3,2, \Omega_0} .
\label{eq:point-fixe:14}
\end{align}

\medskip
At this point we have computed two upper bounds for the norms of $f_F$ and $h_F$.   
Thus, by combining \eqref{eq:point-fixe:000}, \eqref{eq:point-fixe:7}, and \eqref{eq:point-fixe:14}, we finally obtain that there exists a constant $\bd{C_{\mrm{F}}} = \bd{C_{\mrm{F}}} ( C_{\mrm{F}} , C_{\mrm{a}} , C_{\infty} , C_{\R\gamma} , \M )$ such that for all $\rmb_1$, $\rmb_2$ in $B_{\M}$: 
\begin{equation}
\lVert \boldsymbol{\delta} \vcT \rVert_{3,2, \Omega_0^c} +  \lVert \boldsymbol{\delta} \prT \rVert_{2,2, \Omega_0^c} 
\leq \bd{C_{\mrm{F}}}
\lVert f \rVert_{2,2, \RR^2 } 
 \lVert \rmb_1 -\rmb_2 \rVert_{3,2, \Omega_0} .
 \label{eq:point-fixe:9}
\end{equation}



\smallskip\noindent
\textbullet\textit{Step 2: continuity of the structure problem.}
We first prove that Problem \eqref{pbm:elasstokes-displacement-fixed:strong} has a unique solution. For $\rmb \in B_{\M}$, Problem \eqref{pbm:elasstokes-displacement-fixed:strong} involves the source term on $\Gamma_0$:
\begin{equation}
\label{eq:point-fixe:8_a}
f_b = \left[ 
\nu \nabla \vcT(\rmb) F(\rmb) - \prT (\rmb) G(\rmb)
 \right]\nvI_0 , 
\end{equation}
where $\left( \vcT (\rmb), \prT (\rmb)\right)$ is the unique solution of the fluid equations \eqref{pbm:stokes-displacement-fixed:strong} studied in Step~1.
In view of the regularity of the fields involved in the expression \eqref{eq:point-fixe:8_a} and from Lemma~\ref{lem:algebra}, we have that  
\begin{equation}\label{eq:source_term0}
\nu \nabla \vcT(\rmb)F(\rmb) - \prT (\rmb) G(\rmb) \in H^2(\Omega^c_0) .
\end{equation}
Thus $f_b$ belongs to $ ( H^{3/2}  (\Gamma_0) )^2$ and Theorem  \ref{thm:resol_stokes_mixedBC} for $\rmA = \rmB =\Id$ can be applied: for all $\rmb\in B_{\M}$, there exists a unique solution $\left(\dsp(\rmb), \LagVol(\rmb)\right) \in ( H^1_{0,\Gamma_{\omega}}(\Omega_0) \cap H^2(\Omega_0) )^2 \times H^2(\Omega_0)$ of Problem \eqref{pbm:elasstokes-displacement-fixed:strong} and there exists a positive constant $C_{\mrm{s}}$ such that
\begin{equation}\label{eq:apriori0}
\lVert  \dsp(\rmb) \rVert_{3,2,\Omega_0}
+ \lVert \LagVol(\rmb) \rVert_{2,2,\Omega_0}
\leq 
C_{\mrm{s}} \left( \lVert g \rVert_{1,2,\Omega_0} + \lVert f_b \rVert_{H^{3/2}(\Gamma_0)} \right).
\end{equation}

\medskip
Now, we establish a continuity property for Problem \eqref{pbm:elasstokes-displacement-fixed:strong}.
Let  $( \dsp(\rmb_1) , \LagVol(\rmb_1) )$ and $( \dsp(\rmb_2) , \LagVol(\rmb_2) )$ be the solutions of problem \eqref{pbm:elasstokes-displacement-fixed:strong} for respectively $\rmb_1$ and  $\rmb_2$ in $B_{\M}$ (note that in the system $\vcT(\rmb_i)$ are given and solve the fluid equation studied in Step~1).
We set $\boldsymbol{\delta} \dsp := \dsp(\rmb_1) - \dsp(\rmb_2) $ and $\boldsymbol{\delta} \LagVol := \LagVol(\rmb_1) - \LagVol(\rmb_2) $. In view of \eqref{pbm:elasstokes-displacement-fixed:strong}, by difference, we infer that the pair $(\boldsymbol \delta \dsp, \boldsymbol \delta \LagVol)$ solves
\begin{equation}
\label{syst:delta-w}
\left\lbrace
\begin{aligned}
- \mu \div ( \nabla^s \bd{\delta} \dsp ) + \nabla \bd{\delta}\LagVol  & =  0   && \text{ in } \Omega_0 ,\\
\div  \bd{\delta} \dsp & =  0  && \text{ in } \Omega_0 ,\\
 \bd{\delta} \dsp & =   0  && \text{ on } \Gamma_{\omega} , \\
(  \mu  \gd \bd{\delta} \dsp  -\bd{\delta}\LagVol \Id ) n_0  & = 
f_b
&& \text{ on } \Gamma_0,
\end{aligned}
\right.
\end{equation}
with $f_b$ the surface force on $\Gamma_0$,
\begin{equation}
\label{eq:point-fixe:8}
f_b = [ 
\nu \nabla \vcT(\rmb_1) F(\rmb_1) - \nu \nabla \vcT(\rmb_2) F(\rmb_2)
 - q(\rmb_1) G(\rmb_1) +  q(\rmb_2) G(\rmb_2) 
 ]\nvI_0   .
\end{equation}
In view of \eqref{eq:source_term0},
$f_b \in ( H^{3/2}  (\Gamma_0) )^2$ and Theorem  \ref{thm:resol_stokes_mixedBC} applies  giving the a priori estimate
\begin{equation}\label{eq:apriori}
\lVert  \boldsymbol{\delta}\dsp \rVert_{3,2}
+ \lVert  \boldsymbol{\delta}\LagVol \rVert_{2,2}
\leq 
C_{\mrm{s}} \lVert f_b \rVert_{H^{3/2}(\Gamma_0)} . 
\end{equation}
Let us furtherly bound from above the right-hand side, in order to make the norm of the difference $\rmb_1-\rmb_2$ appear.
The first two terms of $f_b$ (see expression~\eqref{eq:point-fixe:8}) satisfy
\begin{align}
& \lVert \left(\nabla \vcT(\rmb_1)F(\rmb_1) - \nabla \vcT(\rmb_2)F(\rmb_2) \right) \nvI_0 \rVert_{3/2 , 2,  \Gamma_0 }
\notag \\
& \hspace{6ex} \leq  C \left(
\lVert \nabla \vcT(\rmb_2) ( F(\rmb_1) -F(\rmb_2) )\rVert_{2,2, \Omega^c_0}   \right.
\left. +
\lVert (\nabla \vcT(\rmb_1)   - \nabla \vcT(\rmb_2))F(\rmb_1) \rVert_{2,2 , \Omega^c_0}\right).
\label{eq:point-fixe:15}
\end{align}
We bound the two terms of the right-hand side of \eqref{eq:point-fixe:15} by using respectively \eqref{eq:point-fixe:1} and \eqref{eq:point-fixe:9}, and noting that $H^{2}$ norm of $F( \rmb )$ is bounded in $B_{\M}$ by a positive constant $C_{\bd{2}} = C_{\bd{2}} ( \M )$. 
This gives:
\begin{align}
&
\nu \lVert (\nabla \vcT(\rmb_1) F(\rmb_1) - \nabla \vcT(\rmb_2)F(\rmb_2) ) \nvI_0 \rVert_{3/2 , 2 , \Gamma_0} \notag
\\
& \hspace{20ex} \leq  \nu
\left( C
C_{\mrm{a}}^2 C_{\mrm{F}} \lVert DF \rVert_{\mathcal{M}} + 
C_{\mrm{a}} \bd{C_{\mrm{F}}} C_{\bd{2}} \right)
\lVert f \rVert_{2,2, \RR^2}\lVert \rmb_1 -\rmb_2 \rVert_{3,2}. \label{eq:point-fixe:16} 
\end{align}
In a same manner, exploiting \eqref{eq:point-fixe:2}, \eqref{eq:point-fixe:9}, and a bound $C_{\bd{3}} = C_{\bd{3}} ( \M )$ of  the $H^{2}$ norm of $G (\rmb )$ for $\mrm{b}$ in $B_{\M}$, we get
\begin{align}
& \lVert ( q(\rmb_1) G(\rmb_1) -  q(\rmb_2) G(\rmb_2)  )\nvI_0 \rVert_{3/2 , 2 , \Gamma_0 } \notag \\
& \hspace{20ex}  \leq 
( C
C_{\mrm{a}}^2 C_{\mrm{F}}  \lVert DG \rVert_{\mathcal{M}} + 
C_{\mrm{a}} \bd{C_{\mrm{F}}} C_{\bd{3}} )
\lVert f \rVert_{2,2, \RR^2 }   \lVert \rmb_1 -\rmb_2 \rVert_{3,2}    .
\label{eq:point-fixe:17}
\end{align}
By combining \eqref{eq:apriori}, \eqref{eq:point-fixe:16}, and \eqref{eq:point-fixe:17}, we conclude that there exists
a positive constant 
$\bd{C_{\M}} = \bd{C_{\M}} ( C_{\mrm{F}} , C_{\mrm{a}} , C_{\infty} , C_{\R\gamma} , \M )$ 
such that:
\begin{equation}
\label{eq:point-fixe:13}
\lVert  \dsp(\rmb_1) - \dsp(\rmb_2) \rVert_{3,2}
+ \lVert  \LagVol(\rmb_1) - \LagVol(\rmb_2) \rVert_{2,2}
\leq C_{\mrm{s}} \bd{C_{\M}}  \lVert f \rVert_{2,2, \RR^2 }   \lVert \rmb_1 - \rmb_2 \rVert_{3,2}  .
\end{equation}  
%

\smallskip\noindent
\textbullet \textit{Step 3: contraction property.} In the sequel we prove that the map $\contract : \rmb \mapsto \dsp(\rmb)$ defined in \eqref{eq:def:contraction} is a contraction.  
From estimate \eqref{eq:point-fixe:13}, we have that there exists a positive constant $C_I$ with  $C_I C_{\mrm{s}} \bd{C_{\M}} < 1$ such that if $\lVert f \rVert_{2,2, \RR^2 } < C_I$, then  $\contract $ is a contraction in $B_{\M}$. 
From \eqref{eq:apriori0}, we deduce that 
there exists a constant $C_{II}$ such that if $\lVert f \rVert_{1,2, \RR^2 } < C_{II}$ and $\lVert g \rVert_{1,2, \Omega_0} <  C_{II}$, then 
\begin{equation}
\label{eq:poin-fixe:15}
\lVert  \dsp(\rmb )  \rVert_{3,2}
+ \lVert  \LagVol(\rmb )   \rVert_{2,2}
\leq \M .
\end{equation} 
By defining
\begin{equation}
C_{\contract} = \min ( C_I , C_{II}),
\end{equation}
we have that if $\lVert f \rVert_{2,2, \RR^2 } < C_{\contract}$ and $\lVert g \rVert_{1,2, \Omega_0} < C_{\contract}$, then the map $\contract$ is a contraction which maps $B_{\M}$ onto $B_{\M}$. 
Thus, the Banach fixed-point theorem ensures that $\contract$ admits a unique fixed point in $B_{\M}$ denoted by $\dsp$.
It results that the solution $( \vcT (\dsp) , \prT(\dsp) , \dsp , \LagVol (\dsp) )$ is the unique solution to the Fluid Structure Interaction problem \eqref{eq:complete-syst-stokes-stokes-transported}.
Finally, combining \eqref{eq:point-fixe:-1}, \eqref{eq:point-fixe:8_a}, \eqref{eq:apriori0}, and \eqref{eq:poin-fixe:15}, we obtain estimate \eqref{eq:estimate:theorem}. 
The proof of Theorem \ref{thm:FSI:pt-fixe} is then complete.


\section{Velocity method and shape differentiability  of the FSI system}
\label{sec:hadam:method}
After having proved the existence of solutions of the FSI system for a prescribed reference configuration, we now address the so called {\it shape sensitivity analysis}: we analyse the behavior of the solutions with respect to infinitesimal perturbations of the reference configuration. The section is organized as follows: we start, in \S \ref{sec:hadam:intro}, by introducing the classical velocity method. Then, in \S \ref{subsec:settings} and \S \ref{subsec:settings:fixed:domain}, we perform some preliminary computations, preparatory to the main result of this section: the shape differentiability of the solutions of the FSI problem, stated in \S \ref{sec:domain-diff}, Theorem \ref{thm:mainofsection4}.

\subsection{Presentation of the method} 
\label{sec:hadam:intro}

We are interested in the study of the behavior of a shape functional $\SHPfun ( \Omega )$ with respect to infinitesimal variations of its argument, the set $\Omega$. 
This topic, referred to as {\it shape derivative} or {\it shape sensitivity analysis}, is now a standard tool in shape optimization. See, e.g., \cite[Chapter 2]{sokolowski_zolesio1992introduction}, \cite[Section 5.1]{henrot_pierre2018shapo_geom}, or \cite[Chapter 6]{allaire2007conception}.

Let us present the classical approach: the {\it velocity method}.
Given an admissible domain $\Omega_0$ for $\SHPfun$, we consider a 1-parameter family of shapes $(\Omega_{0,t})_{t}$ of the form
\begin{equation}
\label{eq:def:Omega_t}
\Omega_{0,t} :=  \Phi_t (\Omega_0)  , 
\end{equation}
where $(\Phi_t)_{t}$ is family of diffeomorphisms, chosen with the following properties:
\begin{itemize}
\item[-] at $t=0$ there holds $\Phi_0=\id_{\mathbb R^2}$;
\item[-] the map $t\mapsto\Phi_t$ is of class $C^1$;
\item[-] each diffeomorphism $\Phi_t$ preserves the imposed geometrical constraints on $\Omega_0$, so that every $\Omega_{0,t}$ is admissible for $\SHPfun$.
\end{itemize}

If the function $t\mapsto \SHPfun(\Omega_{0,t})$ is differentiable at $t=0$, then it admits the following development in $t$:
$$
\SHPfun ( \Omega_{0,t} ) = \SHPfun ( \Omega_0 ) + t \SHPfun ' (\Omega_0 )   + o ( t ) .
$$
The coefficient $\SHPfun ' (\Omega_0 ) $ of $t$ is the so-called shape derivative of $\SHPfun$ at $\Omega_0$ with respect to the deformations $(\Phi_t)_t$.
\\
In the literature, it is classical to take diffeomorphisms of the form 
$$
 \Phi_t = \id_{\RR^{2}} + t  V,
$$
for a suitable vector field $V$, representing the velocity (when $t$ is seen as the time) of $\Phi_t$ at $t=0$.

In order to write the expression of $\SHPfun ' (\Omega_0 ) $, it is useful to introduce the notion of {\it material derivative} of a family of functions $(\varphi_t)_t$
defined on the family of transformed domains $( \Omega_{0,t} )_{t\geq 0}$ given by \eqref{eq:def:Omega_t}.
By definition, $\varphi_t \circ \Phi_t$ are all defined in the fixed domain $\Omega_0$. If the map $t\mapsto \varphi_t \circ \Phi_t$ is differentiable at $t=0$, we define the material derivative $\dot{\varphi}$ of $\varphi_t$ at $t=0$ as the coefficient of $t$ in the expansion
$$
\varphi_t \circ \Phi_t = \varphi_0 + t \dot{\varphi} + o(t).
$$
Note that $\varphi_0$ and $\dot{\varphi}$ do not depend on $t$.

\subsection{Shape transformation of the FSI problem} 
\label{subsec:settings}

\begin{figure}[ht]
\centering
\begin{tikzpicture}[scale=1.1]
\draw (-1.5,-1.5) -- (-1.5,1.5) -- (1.5,1.5) -- (1.5,-1.5) -- cycle;
\draw [domain=0:2*pi, samples=80, smooth] plot ({-(1 + 0.5*exp(-cos(\x r)))*cos(\x r)/2}, {sin(\x r)/2}) ;
\fill[color=gray!20] [domain=0:2*pi, samples=80, smooth] plot ({-(1 + 0.5*exp(-cos(\x r)))*cos(\x r)/2}, {sin(\x r)/2}) ;
\fill[color=gray!60] (0,0) circle (0.2) ; 
\draw (0,0) circle (0.2) ;
\draw (0,0) node{$\omega$} ;
\draw (-1.5,1.5) node[above]{$\had$} ;
\draw (-1.1,1.2) node{$\Omega_{0}^{c}$} ;
\draw (0.5,0) node{$\Omega_{0}$} ;
\begin{scope}[xshift = 6cm]
\draw (-1.5,-1.5) -- (-1.5,1.5) -- (1.5,1.5) -- (1.5,-1.5) -- cycle;
\draw [domain=0:2*pi, samples=80, smooth] plot ({-(1 + 0.5*exp(-cos(\x r)))*cos(\x r)/2}, {(1 - 0.6*( exp(-10*(\x - 3*pi/4)^2 ) ) )*sin(\x r)/2}) ;
\fill[color=gray!20] [domain=0:2*pi, samples=80, smooth] plot ({-(1 + 0.5*exp(-cos(\x r)))*cos(\x r)/2}, {(1 - 0.6*( exp(-10*(\x - 3*pi/4)^2 ) ) )*sin(\x r)/2}) ;
\fill[color=gray!60] (0,0) circle (0.2) ; 
\draw (0,0) circle (0.2) ;
\draw (0,0) node{$\omega$} ;
\draw (-1.5,1.5) node[above]{$\had$} ;
\draw (-1.1,1.2) node{$\Omega_{0,t}^{c}$} ;
\draw (0.6,0) node{$\Omega_{0,t}$} ;
\end{scope}
\begin{scope}[yshift = -5.5cm]
\draw (-1.5,-1.5) -- (-1.5,1.5) -- (1.5,1.5) -- (1.5,-1.5) -- cycle;
\draw [domain=0:2*pi, samples=80, smooth] plot ({-(1 + 0.5*exp(-cos(\x r)))*cos(\x r)/2}, {(1 - 0.3*( exp(-10*(\x - 3*pi/4)^2 ) +exp(-10*(\x - 5*pi/4)^2 ) ) )*sin(\x r)/2}) ;
\fill[color=gray!20] [domain=0:2*pi, samples=80, smooth] plot ({-(1 + 0.5*exp(-cos(\x r)))*cos(\x r)/2}, {(1 - 0.3*( exp(-10*(\x - 3*pi/4)^2 ) +exp(-10*(\x - 5*pi/4)^2 ) ) )*sin(\x r)/2}) ;
\fill[color=gray!60] (0,0) circle (0.2) ; 
\draw (0,0) circle (0.2) ;
\draw (0,0) node{$\omega$} ;
\draw (-1.5,1.5) node[above]{$\had$} ;
\draw (-1.1,1.2) node{$\Omega_{F}$} ;
\draw (0.5,0) node{$\Omega_{S}$} ;
\end{scope}
\begin{scope}[xshift = 6cm, yshift = -5.5cm]
\draw (-1.5,-1.5) -- (-1.5,1.5) -- (1.5,1.5) -- (1.5,-1.5) -- cycle;
\draw [domain=0:2*pi, samples=80, smooth] plot ({-(1 + 0.5*exp(-cos(\x r)) + 0.4*exp(-10*(\x - pi)^2 ) )*cos(\x  r)/2}, {(1 - 0.8*( exp(-6*(\x - 3*pi/4)^2 ) ) )*sin(\x r)/2}) ;
\fill[color=gray!20] [domain=0:2*pi, samples=80, smooth] plot ({-(1 + 0.5*exp(-cos(\x r)) + 0.4*exp(-10*(\x - pi)^2 ) )*cos(\x  r)/2}, {(1 - 0.8*( exp(-6*(\x - 3*pi/4)^2 ) ) )*sin(\x r)/2}) ;
\fill[color=gray!60] (0,0) circle (0.2) ; 
\draw (0,0) circle (0.2) ;
\draw (0,0) node{$\omega$} ;
\draw (-1.5,1.5) node[above]{$\had$} ;
\draw (-1.1,1.2) node{$\Omega_{F,t}$} ;
\draw (0.6,-0.1) node{$\Omega_{S,t}$} ;
\end{scope}
\draw (0,-1.5) node[below]{$Y\in\Omega_{0},\ X\in\Omega_{0}^{c} $} ;
\draw (6,-1.5) node[below]{$Y_{t}\in\Omega_{0,t},\ X_{t}\in\Omega_{0,t}^{c} $} ;
\draw (0,-7.) node[below]{$y\in\Omega_{S},\ x\in\Omega_{F} $} ;
\draw (6,-7.) node[below]{$y_{t}\in\Omega_{S,t},\ x_{t}\in\Omega_{F,t} $} ;
\draw [->] (0,-2.25) -- (0,-3.75) ;
	\draw (0,-3.) node[right]{$\eT$} ;
\draw [->] (6,-2.25) -- (6,-3.75) ;
	\draw (6,-3.) node[right]{$\eT_{t}$} ;
\draw [->] (2,0) -- (4,0) ;
	\draw (3.,0) node[above]{$\Phi  _{t}$} ;
\end{tikzpicture}
\caption{The geometries of the fluid-elasticity system submitted to transformation $\Phi  _{t}$ and the resolution of the coupled problems, characterised by $\eT_{t}$.}
\label{fig:domains}
\end{figure}
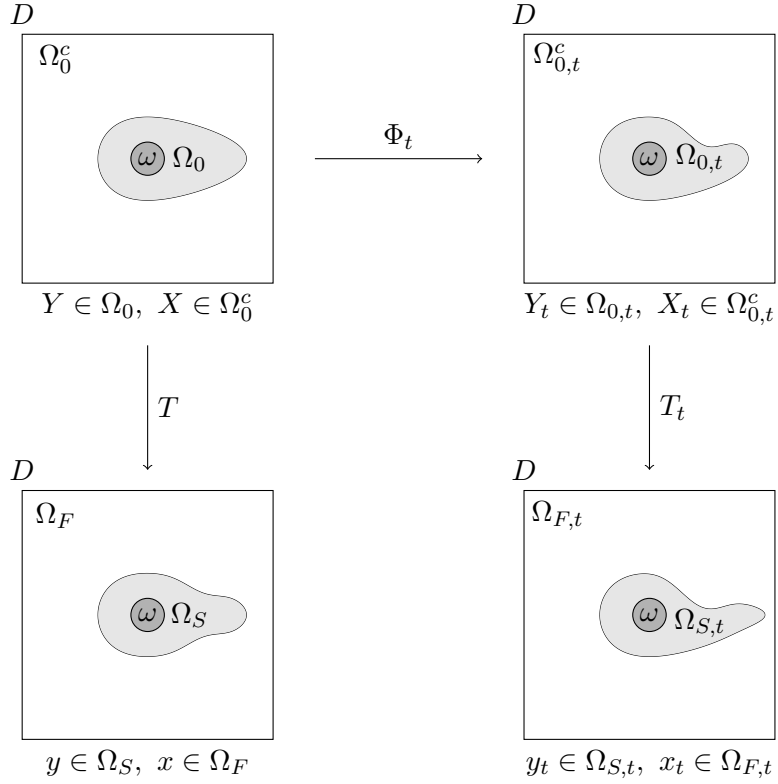

In order to apply the velocity method in our framework, let us start by specifying the transformations $\Phi_t$ that we choose. We consider $t\geq0$ small (the threshold will be specified later) and
\begin{equation}
\label{eq:def:Phit}
\Phi  _{t} := \id_{\RR^{2}} + t V .
\end{equation}
Here $V$ is taken in the space
\begin{equation}
\label{eq:def:THETA}
\Theta :=
\left\lbrace  V  \in  H^{3}(\RR^2 , \RR^2 ) \ :\ 
\mathrm{supp} V \subset \subset  D \setminus \overline{\omega}\right\rbrace.
\end{equation}

Let $\Omega_0$ be defined as in Section \ref{sec:FSI_model}, namely the reference configuration of an elastic body contained into $D$ and attached to the rigid support $\omega$. For $t\geq0$ (small), we set 
\begin{equation}
\label{eq:def:Omega0_t}
\Omega_{0,t}:=\Phi  _{t} (\Omega_{0}),
\quad
\Omega_{0,t}^{c}:=\Phi  _{t} (\Omega_{0}^{c}),
\quad \text{ and } \quad 
\Gamma_{0,t} = \Phi  _{t} (\Gamma_{0}).
\end{equation}
We recall that $\Omega_{0}^{c}$ is the open complementary of $\Omega_0$ in $D\setminus\overline{\omega}$ (see Figure \ref{fig:domains}). 
The assumptions on $\Theta$ ensure that every $\Omega_{0,t}$ is contained into $D$ and its boundary is the union of $\Gamma_{0,t}$ and $\Gamma_\omega$.
Let $(\vct_{t}, \pr_t,  \dsp_{t}, \LagVol_t )$ be the solution of the coupled FSI problem (see \eqref{eq:S-ou-NS}-\eqref{eq:bvp:incompress_elast}) posed on the perturbed elastic body $\Omega_{0,t}$ and on the perturbed fluid domain $\Omega_{F,t}$, defined by 
\begin{align}
\Omega_{F,t} &:= 
D\setminus (\overline{\Omega_{S,t}\cup \omega})\label{eq:def:OmegaSt}
\\
 \Omega_{S,t} &:= (\id_{\mathbb R^2} + \dsp_{t})(\Omega_{0,t}).
\label{eq:def:OmegaFt}
\end{align}
The map $\id_{\mathbb R^2} + \dsp_{t}$ is one to one from $\Omega_{0,t}$ to $\Omega_{S,t}$ for a $\dsp_t$ small enough (see Lemma~\ref{lem:phi(b)-regularite}). 
Thus $\Omega_{S,t}$ and $\Omega_{F,t}$ represent respectively the shape of the elastic body and the incompressible fluid after resolution of the coupled problem. 
In the same way as in Section \ref{sec:FSI:fixed-domain-formul}, we can transport the fluid equations on the reference domain $\Omega_{0,t}^c$. 
In principle, we could repeat the very same steps, by replacing $\Omega_0$ with $\Omega_{0,t}$ and by introducing suitable lifting and trace operators which depend on $t$. An alternative approach, that we follow here, consists in exploiting the change of variables $\Phi_t$, which allows to use the lifting and trace operators defined in \eqref{eq:def:trace-op}-\eqref{eq:def:lifting-op} constructed starting from $\Omega_0$, which of course do not depend on $t$:
\begin{align}
\mathcal{R}  \  : \   H^{3-1/2}  (\Gamma_{0 }) \longrightarrow H^3(\Omega^c_{0 }) ,
\end{align} 
and 
\begin{equation}
\gamma   \   : \ H^3(\Omega_{0 }) \longrightarrow  H^{3-1/2} (\Gamma_{0 }) . 
\end{equation}
We set
\begin{equation}
\label{eq:def:Tt:new}
\eT_t := \id_{\RR^2} + \mathcal{R} ( \gamma  ( \dsp_t \circ \Phi  _t ) ) \circ \Phi  _t^{-1},
\end{equation}
where $\dsp_t \in H^3(\Omega_{0,t})$ is the displacement solving the fluid-structure problem and $\Phi_t$ is defined in \eqref{eq:def:Phit}. The transformation $\eT_t$ maps the domain $\Omega_{0,t}$ onto $\Omega_{S,t}$ and the domain $\Omega_{0,t}^c$ onto $\Omega_{F,t}$ (see Figure \ref{fig:domains}).

Now we can define the Lagrangian fluid velocity and pressure variables
\begin{align}
\label{def:vt-}
\vcT_t &:= \vct_t \circ \eT_{t} , \\
\label{def:-qt}
 \prT_t &:= \pr_t \circ \eT_{t},
\end{align} 
and we find that the transported FSI problem for $( \vcT_t , \prT_t , \dsp_t , \LagVol_t )$ can be written as follows (see \eqref{eq:complete-syst-stokes-stokes-transported}):

\begin{equation}
\label{eq:complete-syst-stokes-stokes-transported-shape-transf1}
\left\{  
\begin{aligned}
- \nu\div ( (\gd \vcT_t ) F( \eT_t  )) + G( \eT_t  )\gd  \prT_t  
%
 & =  (f\circ \eT_t ) J( \eT_t  ) && \text{ in }  \Omega_{0,t}^c, \\
\div   (G( \eT_t  )^{\tp} \vcT_t )  & =  0 &&  \text{ in }   \Omega_{0,t}^c ,  \\
\vcT_t & =  0 && \text{ on }  \partial \Omega_{0,t}^c , \\
\end{aligned}
\right.
\end{equation}
\begin{equation}
\label{eq:complete-syst-stokes-stokes-transported-shape-transf2}
\left\{  
\begin{aligned}
- \mu \div ( \gd \dsp_t ) + \gd \LagVol_t & =  g && \text{ in }  \Omega_{0,t}, \\ 
\div \dsp_t & =  0 && \text{ in }  \Omega_{0,t} ,\\
\dsp_t & =  0 && \text{ on }  \Gamma_{\omega} ,\\
( \mu \gd \dsp_t - \LagVol_t \Id )n_{0,t} &  =    \nu ( \gd \vcT_t ) F(\eT_t )   n_{0,t} 
  - \prT_t  G( \eT_t )   n_{0,t} && \text{ on }  \Gamma_{0,t} ,
\end{aligned}
\right.
\end{equation}
where we formally define for any vector field $\varphi$:
\begin{align}
%
%
\label{def:F(Tt)}
F( \varphi )&=(\gd \varphi)^{-1}\cof (\gd \varphi) , \\
\label{def:G(Tt)}
G( \varphi )&= \cof (\gd \varphi) , \\
\label{def:J(Tt)}
J( \varphi )&= \det (\nabla \varphi) .
\end{align}
It has to be noted that these maps, which will be used in the rest of the article, differ from the ones defined in \eqref{def:J(T)_2}, \eqref{def:G(T)_2}, and \eqref{def:F(T)_2} and used in Section \ref{sec:FSI:resolution}. 
Nevertheless, we still denote them by $F$, $G$, and $J$ for the sake of readability.
\\

Now we want to investigate in Section \ref{sec:domain-diff} the shape differentiability at $0$ of the solutions of the family of FSI problems \eqref{eq:complete-syst-stokes-stokes-transported-shape-transf1}-\eqref{eq:complete-syst-stokes-stokes-transported-shape-transf2},
for $t\geq 0$. 
In particular we want to show the existence of the material derivative at $0$ of $( \vcT_t , \prT_t , \dsp_t , \LagVol_t )$, which amounts to study the differentiability of $( \vcT_t , \prT_t , \dsp_t , \LagVol_t ) \circ \Phi_t$. 
For this, we transport in the following Section \ref{subsec:settings:fixed:domain} the fields which are defined on the transformed domains $\Omega_{0,t}$ and $\Omega^c_{0,t}$ onto the reference domains $\Omega_{0}$ and $\Omega^c_{0}$.

\subsection{Formulation in a fixed domain} 
\label{subsec:settings:fixed:domain}

\subsubsection{Fluid equations}

In Section \ref{subsec:settings}, we have written the Stokes equations transported onto the reference domain $\Omega_{0,t}^c$, by setting new variables $\vcT_t = \vct_t \circ \eT_{t}$ and $\pr_t = \prT_t \circ \eT_{t}$ (see~\eqref{def:vt-} and \eqref{def:-qt}). 
The variational formulation of problem \eqref{eq:complete-syst-stokes-stokes-transported-shape-transf1} can be written as follows.
\begin{equation} 
\label{eq:var:NS:sur-Omega0tc-2}
\left\lbrace
\begin{aligned}
& \text{Find }
(\vcT_t, \prT_t) \in ( H^1_0(\Omega^c_{0,t}) )^2 \times L^2_0(\Omega^c_{0,t}), \\
&\text{ such that }
\forall ( \vcTDual , \prTDual ) \in ( H^1_0(\Omega^c_{0,t}) )^2 \times L^2 (\Omega^c_{0,t})
\text{ :} \\
&\nu\int_{\Omega_{0,t}^c } \gd (\vcT_t)F(\eT_{t} ) \Mscalp  \gd  \vcTDual  
- \int_{\Omega_{0,t}^c } \prT_t ( G(\eT_{t} ) \Mscalp  \nabla  \vcTDual ) 
= \int_{\Omega_{0,t}^c } f_t J(\eT_{t} ) \cdot \vcTDual  ,  \\
& \int_{\Omega_{0,t}^c }   \prTDual   ( G(\eT_{t}) \Mscalp  \nabla \vcT_t) = 0 ,
\end{aligned}
\right.
\end{equation}   
where $f_t := f \circ \eT_{t}$, and $F( \eT_{t} )=(\gd \eT_{t})^{-1}\cof (\gd \eT_{t})$, $G( \eT_{t} )= \cof (\gd \eT_{t})$, and $J( \eT_{t} ) = \det (\nabla \eT_{t})$ (see \eqref{def:F(Tt)}, \eqref{def:G(Tt)}, and \eqref{def:J(Tt)}), with $\eT_{t}$ defined by \eqref{eq:def:Tt:new} for $\dsp_t$ being the displacement solution of the structure part of problem \eqref{eq:complete-syst-stokes-stokes-transported-shape-transf1}-\eqref{eq:complete-syst-stokes-stokes-transported-shape-transf2}.
In \eqref{eq:var:NS:sur-Omega0tc-2}, we have used the Piola identity \eqref{eq:Piola-identity} which yields $\div  ( G(T_t)^\top \vcTDual ) = G(T_t) \Mscalp \gd  \vcTDual$.
Let us recall that $\Phi_t$ defined in \eqref{eq:def:Phit} is such that  $\Phi_t ( \Omega_0^c )  = \Omega^c_{0,t}$.
Let $( \vcTDual , \prTDual ) \in ( H^1_0(\Omega^c_{0}) )^2 \times L^2_0(\Omega^c_{0})$.  
We rewrite the problem \eqref{eq:var:NS:sur-Omega0tc-2} with the test functions $( \vcTDual  \circ \Phi  _t^{-1},   \prTDual   \circ \Phi  _t^{-1} 
)$.
%
%
%
%
We have that the following relations hold
\begin{align}
\label{eq:relation:F:2}
 \nabla \Phi_t ^{-1} ( F(\eT_{t}) \circ \Phi  _t )  \nabla \Phi_t ^{-\tp}J(\Phi_t) 
&= F(\eT_{t} \circ \Phi  _t)  , \\
\label{eq:relation:G:2}
G(\eT_{t} )\circ \Phi  _t  \nabla \Phi_t ^{-\tp} J(\Phi_t)  &= G(\eT_{t} \circ \Phi  _t)  ,  \\
\label{eq:relation:J:2}
J(\eT_{t} )\circ \Phi  _t  J(\Phi_t)  &= J(\eT_{t} \circ \Phi  _t)  , 
\end{align}
where $F$, $G$, and $J$ are defined such as in \eqref{def:F(Tt)}, \eqref{def:G(Tt)}, and \eqref{def:J(Tt)}.

Then we transport the integrals from \eqref{eq:var:NS:sur-Omega0tc-2} onto $\Omega_0^c$ by means of the change of variable $X_t = \Phi  _t(X)$, and we introduce 
\begin{align} 
\label{eq:def:vt2}
\vcT^t  & :=  \vcT_t \circ \Phi  _t  ,  \\
\label{eq:def:pt2}
 \prT^t   &  :=   \prT_t \circ \Phi  _t . 
\end{align} 
After a simplification using \eqref{eq:relation:F:2}, \eqref{eq:relation:G:2}, and  \eqref{eq:relation:J:2}, we obtain
that $( \vcT^t , \prT^t )$ is the solution of the following problem:
\begin{equation}
\label{eq:var:NS:sur-Omega0c2}
\left\lbrace
\begin{aligned}
& \text{Find } ( \vcT^t , \prT^t ) \text{ in } ( H^1_0(\Omega_{0}^c) )^2 \times L^2(\Omega_{0}^c) ,  \\
&  \text{ such that }
\forall (  \vcTDual , \prTDual ) \in ( H^1_0(\Omega_{0}^c) )^2 \times L^2 (\Omega_{0}^c)  
\text{ :} \\
&
\begin{aligned}
 a_{F}^t ( \dsp^t ; \vcT^t   , \vcTDual ) + b_{F}^t ( \dsp^t ; \vcTDual,  \prT^t   ) &= f_{F}^t ( \dsp^t ; \vcTDual    ) ,  \\
 b_{F}^t ( \dsp^t ; \vcT^t  , \prTDual ) &= 0 , 
\end{aligned}
\end{aligned}
\right.
\end{equation}
where we have defined for all $\vcT, \vcTDual$ in $H^1_0(\Omega^c_{0})$ and for all $\prT$ in $L^2 (\Omega^c_{0})$:
\begin{align}
\label{eq:def:aF^t}
a_{F}^t(\dsp^t ; \vcT , \vcTDual)  &:=
%
%
\nu\int_{\Omega_{0}^c} ( \gd \vcT   ) F(\eT_{t}\circ \Phi  _t) \Mscalp  \gd \vcTDual   , \\
\label{eq:def:bF^t}
b_{F}^t(\dsp^t ; \vcTDual , \prT ) & :=  
%
%
- \int_{\Omega_{0}^c}  \prT  ( G(\eT_{t} \circ \Phi  _t) \Mscalp  \gd \vcTDual ) 
,  \\
\label{eq:def:fF^t}
f_{F}^t(\dsp^t ; \vcTDual )  &:=  
%
%
\int_{\Omega_{0}^c} J(\eT_{t} \circ \Phi  _t) f \circ \eT_{t} \circ\Phi  _t   \cdot  \vcTDual ,
\end{align}
and where, by recalling that $\dsp_t$ is the structure displacement solution of problem \eqref{eq:complete-syst-stokes-stokes-transported-shape-transf1}-\eqref{eq:complete-syst-stokes-stokes-transported-shape-transf2} for $t \geq 0$, $\dsp^t$ is defined by 
\begin{equation}
\label{eq:def:wt_phi}
\dsp^t := \dsp_t \circ \Phi_t .
\end{equation}
We see from definition of $\eT_t$ in \eqref{eq:def:Tt:new} that we have
\begin{equation}\label{eq:T_toP_t}
\eT_t \circ \Phi_t = \Phi_t +  \mathcal{R} ( \gamma  ( \dsp^t ) ).
\end{equation}
The weak formulation \eqref{eq:var:NS:sur-Omega0c2} corresponds to the following problem for 
$(\vcT^t,\prT^t)$ posed in the domain $\Omega_0^c$.
%
%
%
\begin{equation}
\label{eq:pbm:fluid-Lag}
\left\lbrace 
\begin{aligned}
- \nu\div ( (\gd \vcT^t ) F( \eT_t \circ \Phi_t  )) + G( \eT_t \circ \Phi_t )\gd  \prT^t  
 & =  (f\circ \eT_t  \circ \Phi_t ) J( \eT_t \circ \Phi_t  ) 
 && \text{ in }  \Omega_{0}^c, \\
\div   ( G( \eT_t \circ \Phi_t  )^{\tp} \vcT^t )  & =   0 
&&  \text{ in }    \Omega_{0}^c ,  \\
\vcT^t & =   0 
&& \text{ on }  \partial \Omega_{0}^c .
\end{aligned}
\right.
\end{equation}
%

\subsubsection{Structure equations}

With the notations introduced right above, the surface force applied by the fluid on the structure can be expressed with respect to $\vcT_t$ and $\prT_t$ by $(\nu(\gd \vcT_t) F(\eT_{t}) - \prT_t G(\eT_{t})) n_{0,t}$. 
Let us then write the variational formulation of the structure problem
\eqref{eq:complete-syst-stokes-stokes-transported-shape-transf2}.
\begin{equation}
\left\lbrace
\begin{aligned}
\label{pbm:elast-incomp-t:var-form}
& \text{Find } (\dsp_t , \LagVol_t)\in ( H^1_{0, \Gamma_{\omega}}(\Omega_{0,t}) )^2 \times L^2(\Omega_{0,t}),   \\
&  \text{ such that } 
\forall ( \dspDual ,  \LagVolDual ) \in ( H^1_{0, \Gamma_{\omega}}(\Omega_{0,t}) )^2 \times L^2(\Omega_{0,t}) 
\text{ :} \\
& \mu \int_{\Omega_{0,t}}  \gd \dsp_t \Mscalp \gd  \dspDual   
-   \int_{\Omega_{0,t}} \LagVol_t \div  \dspDual 
= \int_{\Omega_{0,t}} g\cdot \dspDual  \\
&  \hspace{25ex} +  \int_{\Gamma_{0,t}}  \dspDual  \cdot (\nu(\gd \vcT_t) F(\eT_{t}) - \prT_t  G(\eT_{t})) n_{0,t}\, \mathrm{d}\Gamma_{0,t} , \\
&  \int_{\Omega_{0,t}}  \LagVolDual  \div \dsp_t = 0  .
\end{aligned}
\right.
\end{equation}
where $H^1_{0, \Gamma_{\omega}}(\Omega_{0,t})$ is defined for all $t\geq 0$ such as in \eqref{eq:def:H1_0_omega} by $
H^1_{0,\Gamma_{\omega}}( \Omega_{0,t} ) := 
\lbrace u \in   H^1( \Omega_{0,t} )    \sep u = 0 \text{ on } \Gamma_{\omega} 
 \rbrace 
$.

\smallskip
Let $( \dspDual  ,  \LagVolDual )$ be in $( H^1_{0, \Gamma_{\omega}}(\Omega_{0}) )^2 \times L^2(\Omega_{0 })$.
 We insert $( \dspDual  \circ \Phi  _t^{-1} ,   \LagVolDual  \circ \Phi  _t^{-1})$ as test functions into \eqref{pbm:elast-incomp-t:var-form}, 
and  then we transport the integrals onto $\Omega_0$ or $\Gamma_0$ by means of the change of variable $Y_t = \Phi  _t(Y)$.
We recall that we have (see e.g. \cite{ciarlet_three-dimensional_1988}):
\begin{equation}
\label{eq:change:var:surf2}
\nvI_{0,t}\, \mathrm{d}\Gamma_{0,t}  =  [ \det ( \gd \Phi_t )   \gd \Phi_t  ^{-\mathsf{T}}  \nvI_{0}  ] \, \mathrm{d}\Gamma_0 ,
\end{equation} 
where $\mathrm{d}\Gamma_0$ and $\mathrm{d}\Gamma_{0,t}$ are the length elements of the surfaces $\Gamma_0$ and $\Gamma_{0,t}$ respectively, 
and $\nvI_0$ and $\nvI_{0,t}$ are the normal vectors to $\Gamma_0$ and $\Gamma_{0,t}$ respectively. 
We also recall that $\vcT_t = \vcT^t \circ \Phi_t ^{-1}$ (see  \eqref{eq:def:vt2}), and consequently we have
\begin{equation}
\gd \vcT_t = ( \gd \vcT^t )  \nabla \Phi_t ^{-1} .
\end{equation}
Thus the surface term in \eqref{pbm:elast-incomp-t:var-form} is transported as follows
\begin{align}
& \int_{\Gamma_{0,t}}  \dspDual \circ \Phi_t^{-1}  \cdot \Big(\nu(\gd \vcT_t) F(\eT_{t}) - \prT_t  G(\eT_{t})\Big) n_{0,t}\, \mathrm{d}\Gamma_{0,t}
= \notag\\
& \hspace{5ex}
\int_{\Gamma_{0 }}  \dspDual  \cdot \Big(\nu(\gd \vcT^t) \gd\Phi_t ^{-1} ( F(\eT_{t}) \circ \Phi_t  ) \gd\Phi_t ^{-\tp}    -  \prT^t ( G(\eT_{t})  \circ \Phi_t  )   \nabla \Phi_t ^{-\tp} \Big) J(\Phi_t) n_{0 } \, \mathrm{d}\Gamma_{0 },
\label{eq:mat:der:fluid:1}
\end{align}
where we recall that $\prT_t \circ \Phi  _t  =  \prT^t $ (see \eqref{eq:def:pt2}).
In view of \eqref{eq:relation:F:2} and \eqref{eq:relation:G:2}, we can rewrite \eqref{eq:mat:der:fluid:1} as
\begin{align}
 \int_{\Gamma_{0,t}}  \dspDual \circ \Phi_t^{-1}  \cdot (\nu(\gd \vcT_t) F(\eT_{t}) - \prT_t  G(\eT_{t})) n_{0,t}\, \mathrm{d}\Gamma_{0,t}
&= \notag\\
& \hspace{-25ex}
\int_{\Gamma_{0 }}  \dspDual  \cdot (\nu(\gd \vcT^t)  F( \eT_{t}  \circ \Phi_t  )  - \prT^t   G(  \eT_{t} \circ \Phi_t  ) ) n_{0 }\, \mathrm{d}\Gamma_{0 }.
\label{eq:mat:der:fluid:2}
\end{align}
With $\dsp^t$ defined in \eqref{eq:def:wt_phi} and $\LagVol^t$ defined by 
\begin{align}
\label{eq:def:st_phi}
 \LagVol^t :=   \LagVol_t \circ \Phi  _t , 
\end{align}
we have thus that $( \dsp^t , \LagVol^t )$ is solution of the problem:
\begin{equation}
\label{eq:lag:structure:pbm}
\left\lbrace
\begin{aligned}
& \text{Find } 
( \dsp^t , \LagVol^t ) \text{ in } ( H^1_{0, \Gamma_{\omega}}(\Omega_{0}) )^2 \times L^2(\Omega_{0}) ,  \\
& 
\text{ such that }
\forall ( \dspDual , \LagVolDual ) \text{ in } ( H^1_{0, \Gamma_{\omega}}(\Omega_{0}) )^2 \times L^2(\Omega_{0})  
\text{ :} \\
&
\begin{aligned}
 a_{S}^t ( \dsp^t , \dspDual) + b_{S}^t ( \dspDual, \LagVol^t ) &= f_{S}^t ( \dsp^t ; \vcT^t ; \prT^t ;  \dspDual ),  \\
 b_{S}^t ( \dsp^t , \LagVolDual ) &= 0 ,  
\end{aligned}
\end{aligned}
\right.
\end{equation}
with $\vcT^t$ and $\prT^t$ solutions of the weak formulation \eqref{eq:var:NS:sur-Omega0c2}
and where, for all $\dsp, \dspDual$ in $( H^1_{0, \Gamma_{\omega}}(\Omega_{0}) )^2$,
and for all $\LagVol$ in $L^2(\Omega_{0})$:
\begin{align}
\label{eq:def:aS^t}  
a_{S}^t ( \dsp , \dspDual) &:= 
%
%
\mu \int_{\Omega_{0}}  [ ( \gd \dsp )  \nabla \Phi_t  ^{-1} ]  \Mscalp   [ ( \gd \dspDual )  \nabla \Phi_t  ^{-1} ]    J(\Phi_t)  , \\
\label{eq:def:bS^t}
b_{S}^t ( \dspDual, \LagVol ) &:= 
%
%
 - \int_{\Omega_{0}} \LagVol ( \Id \Mscalp  ( \gd \dspDual )  \nabla \Phi_t  ^{-1} )  J(\Phi_t)   ,  \\
\label{eq:def:fS^t}
f_{S}^t ( \dsp ; \vcT ; \prT ;  \dspDual ) &:=
%
%
 \int_{\Omega_{0}}  J(\Phi_t)   ( g\circ \Phi  _t ) \cdot \dspDual \notag\\
 & + \int_{\Gamma_{0}} \dspDual\cdot \Big(\nu (\gd \vcT ) F( \eT_t \circ \Phi  _t ) - \prT  G( \eT_t \circ \Phi  _t ) 
  \Big) n_{0}\,\mathrm{d}\Gamma_0  .
\end{align}
Thus, from definitions of $F$, $G$, and $J$ given in \eqref{def:F(Tt)}-\eqref{def:G(Tt)}, we can write the transformed structure problem written on the fixed domain $\Omega_0$ as follows:
\begin{equation}
\label{eq:pbm:struct-Lag}
\left\lbrace  
\begin{aligned}
- \mu \div ( (\gd \dsp^t) F(\Phi_t) ) 
+ G(\Phi_t) \gd \LagVol^t & =   ( g\circ \Phi  _t ) J(\Phi_t) && \text{ in }  \Omega_{0}, \\ 
\div ( G(\Phi_t)^\top  \dsp^t ) & =  0 && \text{ in }  \Omega_{0} ,\\
\dsp^t & =  0 && \text{ on }  \Gamma_{\omega} ,\\
(  \mu (\gd \dsp^t) F(\Phi_t)
- \LagVol_t G(\Phi_t)   )n_{0 } 
&  =    
(
\nu ( \gd \vcT^t ) F( \eT_t \circ \Phi_t )   
 - \prT^t  G( \eT_t \circ \Phi_t )  ) n_{0}  && \text{ on }  \Gamma_{0} .
\end{aligned}
\right.
\end{equation}

\subsubsection{FSI problem in a fixed domain formulation}

In the two previous subsections, we have shown that the fields $( \vcT^t , \prT^t , \dsp^t , \LagVol^t  )$ defined in 
\eqref{eq:def:vt2}, \eqref{eq:def:pt2}, \eqref{eq:def:wt_phi}, and \eqref{eq:def:st_phi}
from the solution of the transformed FSI problem~\eqref{eq:complete-syst-stokes-stokes-transported-shape-transf1}-\eqref{eq:complete-syst-stokes-stokes-transported-shape-transf2}, 
are solutions to the following problem:
\begin{equation}
\left\lbrace 
\begin{aligned} 
- \nu\div ( (\gd \vcT^t ) F( \eT_t \circ \Phi_t  )) + G( \eT_t \circ \Phi_t )\gd  \prT^t  
%
 & =   (f\circ \eT_t  \circ \Phi_t ) J( \eT_t \circ \Phi_t  ) 
 && \text{ in }   \Omega_{0}^c, \\
\div   ( G( \eT_t \circ \Phi_t  )^{\tp} \vcT^t )  & =   0 
&&  \text{ in }   \Omega_{0}^c ,  \\
\vcT^t & =   0 
&& \text{ on }  \partial \Omega_{0}^c , \\
- \mu \div ( (\gd \dsp^t) F(\Phi_t) ) 
+ G(\Phi_t) \gd \LagVol^t & =   ( g\circ \Phi  _t ) J(\Phi_t) 
&& \text{ in }  \Omega_{0}, \\ 
\div ( G(\Phi_t)^\top  \dsp^t ) & =  0 
&& \text{ in }  \Omega_{0} ,\\
\dsp^t & =  0 
&& \text{ on }  \Gamma_{\omega} ,\\
(  \mu (\gd \dsp^t) F(\Phi_t)
- \LagVol_t G(\Phi_t)   )n_{0 } 
&  =    \nu ( \gd \vcT^t ) F( \eT_t \circ \Phi_t )   n_{0} &&  \\
  &  \hspace{7ex}  - \prT^t  G( \eT_t \circ \Phi_t )   n_{0}  
  && \text{ on }  \Gamma_{0} .
\end{aligned}
\right.
\label{pbm:FSI:t:domaine-fixe}
\end{equation}
%
%
%
%

\subsection{Uniform well-posedness for small $t$}

Applying the same procedure as in Section~\ref{sec:FSI:resolution}, we have the following result.

\begin{prop}
\label{thm:wellposed:unif}
Let $f\in (H^{2} ( \RR^2 ) )^2$ and  $g\in (H^1 (\RR^2 ))^2$.
There exists a three positive constants $t_\M$, $C_{\contract}$ and $C_{FS}$ such that if
$\lV f \rV_{2,2} \leq C_{\contract}$ and $\lV g \rV_{1,2} \leq C_{\contract}$ then for all $t\in [0 , t_\M)$, 
Problem \eqref{pbm:FSI:t:domaine-fixe} admits a unique solution 
$( \vcT^t , \prT^t , \dsp^t , \LagVol^t  )\in 
( H^1_0 (\Omega^c_0) \cap H^3 (\Omega^c_0) )^2 \times 
( L^2_0(\Omega^c_0) \cap  H^2(\Omega^c_0) ) \times
( H^1_{0,\Gamma_{\omega}}(\Omega_0) \cap H^3(\Omega_0) )^2 \times 
H^2(\Omega_0)$.
Furthermore, there exists a positive constant $C_{FS}$ which does not depend on $t$, such that
\begin{equation}
\lV \vcT^t \rV_{3,2,\Omega^c_0}
+ \lV \prT^t  \rV_{2,2,\Omega^c_0}
+ \lV \dsp^t \rV_{3,2,\Omega_0}
+ \lV \LagVol^t \rV_{2,2,\Omega_0}
\leq 
C_{FS}
(\lV f \rV_{2,2, \RR^2 }
+ \lV g \rV_{1,2, \RR^2 })  .
\end{equation}
\end{prop}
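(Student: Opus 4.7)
The plan is to mimic the fixed-point procedure of the proof of Theorem \ref{thm:FSI:pt-fixe}, tracking carefully how every constant depends on the parameter $t$. Following Subsection \ref{subsection:FixPointProc}, for each fixed $t\in[0,t_{\mathcal{M}})$ (with the threshold $t_{\mathcal{M}}$ to be chosen small), I would define a map
\[
\contract_t: B_{\mathcal{M}} \longrightarrow (H^3(\Omega_0))^2, \qquad \rmb \longmapsto \dsp^t(\rmb),
\]
built in two stages. Given $\rmb\in B_{\mathcal{M}}$, set $\widetilde T_t(\rmb) := \Phi_t + \R(\gamma(\rmb))$, which plays the role of $\eT_t\circ\Phi_t$ in view of \eqref{eq:T_toP_t}. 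First I would solve the perturbed Stokes system analogous to \eqref{eq:pbm:fluid-Lag} with coefficients $F(\widetilde T_t(\rmb))$, $G(\widetilde T_t(\rmb))$ and source $(f\circ\widetilde T_t(\rmb))J(\widetilde T_t(\rmb))$, producing $(\vcT^t(\rmb),\prT^t(\rmb))$ via Theorem \ref{thm:cattabriga-perturbe}. Then I would solve the perturbed elasticity system analogous to \eqref{eq:pbm:struct-Lag}, whose bulk coefficients are $F(\Phi_t), G(\Phi_t), J(\Phi_t)$ and whose surface datum on $\Gamma_0$ is $\nu(\nabla\vcT^t(\rmb)) F(\widetilde T_t(\rmb)) - \prT^t(\rmb)\, G(\widetilde T_t(\rmb))$, producing $(\dsp^t(\rmb),\LagVol^t(\rmb))$ via Theorem \ref{thm:resol_stokes_mixedBC}. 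The hypothesis $\rmB = \cof\nabla\psi$ required by that theorem holds with $\psi=\Phi_t\in(H^3)^2$.

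Uniformity in $t$ is the heart of the matter, and it hinges on the observation that $\Phi_t=\id_{\RR^2}+tV$ with $V\in\Theta\subset(H^3(\RR^2))^2$, so that $\|\Phi_t-\id_{\RR^2}\|_{3,2}=O(t)$. By Lemma \ref{lem:regu:JGF} and Lemma \ref{lem:algebra} this yields
\[
\|\Id-F(\Phi_t)\|_{2,2}+\|\Id-G(\Phi_t)\|_{2,2}+\|1-J(\Phi_t)\|_{2,2}\xrightarrow[t\to 0]{} 0.
\]
Combined with the uniform bound $\|\R(\gamma(\rmb))\|_{3,2}\le C\mathcal{M}$ valid for $\rmb\in B_{\mathcal{M}}$, this shows that for $\mathcal{M}$ and $t_{\mathcal{M}}$ both small enough, the pairs $F(\widetilde T_t(\rmb)), G(\widetilde T_t(\rmb))$ and $F(\Phi_t), G(\Phi_t)$ simultaneously satisfy the smallness conditions \eqref{eq:stokes-norm-proche-idt} and \eqref{eq:stokes-norm-proche-idt:2} uniformly in $(\rmb,t)$, and the two-sided bounds \eqref{eq:point-fixe:05bis}--\eqref{eq:point-fixe:05} hold for $\widetilde T_t(\rmb)$ with constants $C_1,C_2$ independent of $t$. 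Hence Theorems \ref{thm:cattabriga-perturbe} and \ref{thm:resol_stokes_mixedBC} apply with constants $C_{\mrm{F}}$ and $C_{\mrm{s}}$ that do not depend on $t$, giving well-posedness of the two subproblems with $t$-uniform a priori estimates.

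The contraction argument of Step 3 in the proof of Theorem \ref{thm:FSI:pt-fixe} then transfers essentially verbatim. The computations leading to \eqref{eq:point-fixe:9} and \eqref{eq:point-fixe:13} only rely on (i) Lemma \ref{lem:regu:JGF} for the smoothness of $J,G,F$, (ii) the uniform bounds established above, and (iii) the composition estimate of Lemma 5.3.9 in \cite{henrot_pierre2018shapo_geom} applied to $\rmb\mapsto f\circ\widetilde T_t(\rmb)$; all three items remain $t$-independent on $[0,t_{\mathcal{M}})$. One therefore obtains a contraction estimate for $\contract_t$ with rate controlled by $C_{\mrm{s}}\bd{C}_{\mathcal{M}}\|f\|_{2,2,\RR^2}$, the constant $\bd{C}_{\mathcal{M}}$ being $t$-independent. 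Choosing $C_{\contract}$ small enough (depending only on $\Omega_0$, $V$ and $\mathcal{M}$) so that this rate is strictly less than $1$ and so that $\contract_t(B_{\mathcal{M}})\subset B_{\mathcal{M}}$ via the $t$-uniform analog of \eqref{eq:poin-fixe:15}, the Banach fixed-point theorem yields a unique fixed point $\dsp^t\in B_{\mathcal{M}}$, from which the full quadruple $(\vcT^t,\prT^t,\dsp^t,\LagVol^t)$ is recovered together with the announced estimate.

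The main obstacle, and essentially the only nontrivial work beyond Section \ref{sec:FSI:resolution}, is the bookkeeping required to verify that \emph{every} constant entering the fixed-point argument—including the derivative norms \eqref{eq:point-fixe:10}--\eqref{eq:point-fixe:12}, the composition bound \eqref{eq:point-fixe:04}, and the Sobolev embedding constants—stays bounded uniformly as $t$ varies over $[0,t_{\mathcal{M}})$. This is essentially automatic because $V$ is compactly supported in $D\setminus\overline{\omega}$ and $\Phi_t\to\id_{\RR^2}$ in $H^3$, but requires careful parameter tracking in each of the inequalities \eqref{eq:point-fixe:1}--\eqref{eq:point-fixe:17}. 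The strengthened hypothesis $g\in H^1(\RR^2)$ (rather than $H^1(\Omega_0)$) is used precisely to make $g\circ\Phi_t$ well defined on $\Omega_0$ with a $t$-uniform $H^1$-bound, via an estimate in the spirit of \eqref{eq:point-fixe:04}.
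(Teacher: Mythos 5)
Your proposal is correct and follows essentially the same route as the paper: the paper also runs the fixed-point scheme of Section \ref{sec:FSI:resolution} on the composed map $\eT(\rmb)\circ\Phi_t=\Phi_t+\R(\gamma(\rmb))$ (your $\widetilde T_t(\rmb)$), chooses $t_\M$ with $t_\M\lV V\rV_{3,2}$ small so that the perturbation $tV+\R(\gamma(\rmb))$ stays in the regime where Lemma \ref{lem:phi(b)-regularite}, the bounds \eqref{eq:point-fixe:05bis}--\eqref{eq:point-fixe:05}, and the smallness conditions \eqref{eq:stokes-norm-proche-idt}, \eqref{eq:stokes-norm-proche-idt:2} hold uniformly in $(t,\rmb)$, and then applies Theorems \ref{thm:cattabriga-perturbe} and \ref{thm:resol_stokes_mixedBC} and the contraction argument with $t$-independent constants (the paper does the bookkeeping on the ball $B_{\M/2}$ rather than shrinking $\M$ itself, a purely cosmetic difference). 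Your remark on why $g$ must be taken in $H^1(\RR^2)$ matches the role it plays in the paper's statement.
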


\noindent
 {\it Proof of Proposition} \ref{thm:wellposed:unif}.
We copy the fixed point procedure built in Section 3.2 with the new definition of the transformation $T_t$ in \eqref{eq:def:Tt:new}. This leads to consider the adapted transformation
\begin{equation}
\eT (\rmb) = \id_{\RR^2} + \R \left(\gamma ( \rmb )\right)\circ \Phi_t^{-1}
\end{equation}
for $\rmb\in H^3(\Omega_0)^2$ and to introduce the following problem
\begin{equation} \label{eq:FSI_uniform1}
\left\lbrace 
\begin{aligned}
- \nu\div ( (\gd \vcT^t(\rmb) ) F( \eT (\rmb) \circ \Phi_t  )) \hspace{1cm} & && \\
 + G( \eT (\rmb) \circ \Phi_t )\gd  \prT^t(\rmb)  
%
 &  =  (f\circ \eT(\rmb)  \circ \Phi_t ) J( \eT(\rmb) \circ \Phi_t  ) 
 && \text{ in }   \Omega_{0}^c, 
 \\
\div   ( G( \eT(\rmb) \circ \Phi_t  )^{\tp} \vcT^t(\rmb) )  & =  0  
&&  \text{ in }   \Omega_{0}^c ,  
\\
\vcT^t(\rmb) & =  0 
&& \text{ on }  \partial \Omega_{0}^c , 
\\
- \mu \div ( (\gd \dsp^t(\rmb) ) F(\Phi_t) ) 
+ G(\Phi_t) \gd \LagVol^t(\rmb) & =   ( g\circ \Phi  _t ) J(\Phi_t) 
&& \text{ in }  \Omega_{0},
\\ 
\div ( G(\Phi_t)^\top  \dsp^t(\rmb) ) & =  0 
&& \text{ in }  \Omega_{0} ,
\\
\dsp^t & =   0 
&& \text{ on }  \Gamma_{\omega} ,
\\
(  \mu (\gd \dsp^t (\rmb) ) F(\Phi_t)
- \LagVol^t(\rmb) G(\Phi_t)   )n_{0 } 
&  =     \nu ( \gd \vcT^t(\rmb) ) F( \eT(\rmb) \circ \Phi_t )   n_{0} 
&&      
\\
  &   - \prT^t(\rmb)  G( \eT(\rmb) \circ \Phi_t )   n_{0}  
  && \text{ on }  \Gamma_{0} .
\end{aligned}
\right.
\end{equation}
%
%
%
%
%
%
%
We can adapt the proof of Theorem \ref{thm:FSI:pt-fixe} in Section 3 to prove that the map 
\begin{equation}
\label{eq:def:contraction_t}
\begin{array}{rccc}
\contract_t  : & ( H^3(\Omega_{0}) )^2  & \longrightarrow & ( H^3(\Omega_{0}) )^2 \\
& \rmb & \longmapsto &  \dsp^t ( \rmb ) 
\end{array}
\end{equation}
has a unique fixed point $\dsp^t$ such that $\left(\vcT^t(\dsp^t), \prT^t(\dsp^t), \dsp^t, \LagVol^t(\dsp^t)\right)$ corresponds to the solution of Problem \eqref{pbm:FSI:t:domaine-fixe}.

\smallskip
We recall that 
$\Phi_t = \id_{\RR^2} + t V$ and we have
$\eT (\rmb) \circ \Phi_t = \id_{\RR^2} + \eta_t(\rmb)$ with
\begin{equation}
\eta_t(\rmb) :=t V + \R \left(\gamma ( \rmb )\right).
\end{equation}
We know that $\lV \R \left(\gamma ( \rmb ) \right)\rV_{3,2,\had} \leq C_{\R\gamma} \lV \rmb \rV_{3,2,\Omega_0}$.
Then, let $t_{\M} > 0$ be such that $t_{\M} \lV V \rV_{3,2} \leq C_{\R\gamma}  \M / 2$. Thus, we have that
$$
\lV \eta_t(\rmb) \rV_{3,2,\had} \leq C_{\R\gamma} \M
$$
for all $t\in [0 , t_\M)$ and for all $\rmb \in B_{\M/2} := \lbrace \rmb \in ( H^3(\Omega_0) )^2 \sep \lVert \rmb \rVert_{3,2,\Omega_0} \leq \M / 2  \rbrace$.
Now, we can choose the constant $\M>0$ independent of $t$ such that
for all $u \in H^3 (\had)$ with $\lV u \rV_{3,2,\had} \leq C_{\R\gamma} \M$, then $(\id_{\RR^2} + u)$ satisfies all the  properties required in Section~\ref{sec:FSI:resolution}.
In particular, we have that, for all $t\in [0 , t_\M)$ and for all $\rmb \in B_{\M/2}$:
\begin{itemize}
    \item Lemma \ref{lem:phi(b)-regularite} and inequalities \eqref{eq:point-fixe:05bis} and \eqref{eq:point-fixe:05} are satisfied for both $\Phi_t$ and $\eT (\rmb) \circ \Phi_t$,
    \item Conditions \eqref{eq:stokes-norm-proche-idt} are satisfied for $\rmA = F( \eT (\rmb) \circ \Phi_t)$, $\rmB = G( \eT (\rmb) \circ \Phi_t )$, and  \eqref{eq:stokes-norm-proche-idt:2} are satisfied for $\rmA = F(\Phi_t)$, $\rmB = G(\Phi_t)$. 
\end{itemize}
As a consequence, we can proceed as in Section \eqref{sec:S_constraction} by applying successively Theorems~\ref{thm:cattabriga-perturbe} and \ref{thm:resol_stokes_mixedBC} in order to solve 
Problem \eqref{eq:FSI_uniform1}.
Thereafter,
we show that there exists a constant $C_{\contract}$ which depend only on $\M$ and $\Omega_0$ -- and not on $t$ -- such that if $\lV f \rV_{2,2} \leq C_{\contract}$ and $\lV g \rV_{1,2} \leq C_{\contract}$, then $\contract_t$ is a contraction and $\contract_t (B_{\M/2}) \subset B_{\M/2}$.



\subsection{Differentiability with respect to the domain}
\label{sec:domain-diff}

We want to analyse the shape sensitivity of these solutions, namely their behavior with respect to small variations of $t$.   
For this, we apply the classical method presented in \cite{henrot_pierre2018shapo_geom} Sections 5.3.3 and 5.3.4. 
The main result of this Section is the following.

\begin{thm}\label{thm:mainofsection4}
Under assumptions of Proposition \ref{thm:wellposed:unif}, let $( \vcT^t , \prT^t, \dsp^{t} , \LagVol^t )$ be the unique solution to the FSI problem \eqref{pbm:FSI:t:domaine-fixe} for all $t \in [0 , t_\M)$. In addition, we assume that $g$ belongs to $(H^2 (\RR^2 ))^2$.
Then the map
\begin{equation}
   t \in [0 , t_\M) \mapsto ( \vcT^t , \prT^t, \dsp^{t} , \LagVol^t )  
\end{equation}
is differentiable in the vicinity of $0$ in 
\begin{equation}
    (H^1_0(\Omega_{0}^c) \cap H^3(\Omega_{0}^c))^2
   \times L^2_0(\Omega_{0}^c)\cap H^2 (\Omega_{0}^c)
   \times (H^1_{0, \Gamma_{\omega}}(\Omega_{0}) \cap H^3 (\Omega_{0}) )^2 
   \times H^2 (\Omega_{0}) .
\end{equation}
\end{thm}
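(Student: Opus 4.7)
The plan is to cast the transported FSI system \eqref{pbm:FSI:t:domaine-fixe} as a single nonlinear equation on a $t$-independent Banach space and then apply the implicit function theorem in Banach spaces. Set
\begin{equation}
\mathcal{X} := (H^1_0(\Omega_{0}^c) \cap H^3(\Omega_{0}^c))^2
   \times (L^2_0(\Omega_{0}^c)\cap H^2 (\Omega_{0}^c))
   \times (H^1_{0, \Gamma_{\omega}}(\Omega_{0}) \cap H^3 (\Omega_{0}) )^2
   \times H^2 (\Omega_{0}) ,
\end{equation}
and let $\mathcal{Y}$ be the natural image space in which the equations of \eqref{pbm:FSI:t:domaine-fixe} take their values (including the boundary trace space on $\Gamma_0$). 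Encode the system as $\mathcal{F}(t,U)=0$ with $U=(\vcT,\prT,\dsp,\LagVol)\in\mathcal{X}$, where $\mathcal{F}:[0,t_{\M})\times\mathcal{X}\to\mathcal{Y}$ collects the six bulk equations and the interface condition appearing in \eqref{pbm:FSI:t:domaine-fixe}.

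First, I would establish that $\mathcal{F}$ is of class $C^1$ in a neighbourhood of $(0,U^0)$. Since $\Phi_t=\id_{\RR^2}+tV$ is affine in $t$ with $V\in\Theta\subset H^3$, the maps $t\mapsto\Phi_t$, $t\mapsto J(\Phi_t)$, $t\mapsto G(\Phi_t)$, $t\mapsto F(\Phi_t)$ are smooth in the relevant Sobolev norms by Lemma \ref{lem:regu:JGF} combined with the algebra property of Lemma \ref{lem:algebra}. The composite map $(t,\dsp)\mapsto \eT_t\circ\Phi_t=\Phi_t+\mathcal{R}(\gamma(\dsp))$ (see \eqref{eq:T_toP_t}) is affine in $\dsp$ and smooth in $t$. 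The delicate terms are the composition $f\circ\eT_t\circ\Phi_t$ and $g\circ\Phi_t$: their $C^1$ dependence on $(t,\dsp)$ follows from \cite{henrot_pierre2018shapo_geom}, Lemma 5.3.9, exactly as in Step~1 of the proof of Theorem \ref{thm:FSI:pt-fixe}, provided $f\in (H^2(\RR^2))^2$ and $g\in (H^2(\RR^2))^2$ (this is why the extra regularity on $g$ is assumed in the statement). Multiplication by the matrix fields $F,G,J$ is handled by Lemma \ref{lem:algebra}, and the trace of $\vcT$ onto $\Gamma_0$ in the interface condition is continuous in the relevant norms. Each ingredient is $C^\infty$ in its argument, so $\mathcal{F}\in C^1([0,t_{\M})\times\mathcal{X},\mathcal{Y})$.

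Next, I would verify that $D_U\mathcal{F}(0,U^0):\mathcal{X}\to\mathcal{Y}$ is a bijection. At $t=0$ one has $\Phi_0=\id$, $J(\Phi_0)=1$, $F(\Phi_0)=G(\Phi_0)=\Id$, and the linearization is a coupled linear Stokes--elasticity problem: a perturbed Stokes problem for $(\delta\vcT,\delta\prT)$ on $\Omega_0^c$ with the same structure as \eqref{syst:stokes-perturb-v}, coupled through a linear trace on $\Gamma_0$ to a mixed-boundary Stokes-like problem for $(\delta\dsp,\delta\LagVol)$ on $\Omega_0$ of the form \eqref{eq:stokes_mixedBC:1}. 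Its matrix coefficients involve only $U^0$ and are smooth perturbations of the identity. Invertibility is obtained by reproducing the three-step contraction argument of Section~\ref{sec:S_constraction}: apply Theorem \ref{thm:cattabriga-perturbe} for the fluid block, Theorem \ref{thm:resol_stokes_mixedBC} for the structure block, and close the coupling by a contraction map analogous to $\contract$ of \eqref{eq:def:contraction}. The smallness condition $\lV f\rV_{2,2},\lV g\rV_{1,2}\leq C_{\contract}$ inherited from Proposition \ref{thm:wellposed:unif} controls the coupling constants, so the coupling map in the linearized system is a contraction and $D_U\mathcal{F}(0,U^0)$ is a topological isomorphism.

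Finally, I would invoke the implicit function theorem: there exist $t^\ast\in(0,t_{\M})$ and a unique $C^1$ curve $t\mapsto U(t)\in\mathcal{X}$ on $[0,t^\ast)$ with $\mathcal{F}(t,U(t))=0$ and $U(0)=U^0$. By the uniqueness statement in Proposition \ref{thm:wellposed:unif}, $U(t)$ must coincide with $(\vcT^t,\prT^t,\dsp^t,\LagVol^t)$, proving the desired differentiability.

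I expect the main obstacle to be the coupling in the linearized operator $D_U\mathcal{F}(0,U^0)$: the interface condition on $\Gamma_0$ mixes $(\delta\vcT,\delta\prT)$ and $(\delta\dsp,\delta\LagVol)$, so one must not treat each block in isolation but re-run the fixed-point coupling of Section \ref{sec:S_constraction} in the linear setting, keeping careful track of the trace estimates on $\Gamma_0$ (as in \eqref{eq:point-fixe:16}--\eqref{eq:point-fixe:17}) to guarantee the contraction property under the same smallness assumption on the data.
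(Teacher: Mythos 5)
Your proposal follows essentially the same route as the paper's proof: the system \eqref{pbm:FSI:t:domaine-fixe} is recast as $\rrF(t,\X)=0$ on the fixed product space, the $C^1$ regularity of $\rrF$ is obtained from the algebra property (Lemma \ref{lem:algebra}), Lemma \ref{lem:regu:JGF} and Lemma 5.3.9 of \cite{henrot_pierre2018shapo_geom} (whence the extra $H^2$ assumption on $g$), and the invertibility of $D_{\X}\rrF(0,\X^0)$ is established by re-running the fixed-point coupling of Section \ref{sec:S_constraction} with Theorems \ref{thm:cattabriga-perturbe} and \ref{thm:resol_stokes_mixedBC} under the smallness assumption on the data, before concluding with the implicit function theorem and uniqueness. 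The only slight imprecision is that the linearized coupling enters not only through the interface condition on $\Gamma_0$ but also through the bulk fluid equations (the source terms $\Bf_1,\Bf_2$ built from $D_w F(\eT^0)$, $D_w G(\eT^0)$, $D_w J(\eT^0)$), yet your plan to reproduce the contraction argument in the linear setting handles these terms exactly as the paper does.
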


\begin{proof}
The key argument is the Implicit Function Theorem, that will be applied to an adequate operator characterizing the problem, and which depends on both $t$ and the state variables representing the solution. 

Let us set
\begin{align}
& \mathrm{H}_1 :=  (H^1_0(\Omega_{0}^c) \cap H^3(\Omega_{0}^c))^2,   
&& \mathrm{H}_2 :=  L^2_0(\Omega_{0}^c)\cap H^2 (\Omega_{0}^c)   ,  \\
& \mathrm{H}_3 :=  (H^1_{0, \Gamma_{\omega}}(\Omega_{0}) \cap H^3 (\Omega_{0}) )^2  ,  
&& \mathrm{H}_4 := H^2 (\Omega_{0})   ,  \\
& \mathrm{K}_1 :=  (H^1(\Omega_{0}^c))^2  ,  
&& \mathrm{K}_3 := (H^1 (\Omega_{0}) )^2  ,  \\
& \mathrm{K}_4 := H^1 (\Omega_{0})   ,  
&& \mathrm{K}_5 :=  H^{3/2} (\Gamma_{0})  ,  
\end{align}
and 
\begin{equation}
\mathrm{K}_2 := \left\lbrace h \in H^1 (\Omega_{0}^c) \ \Big\vert \  \int_{\Omega_0^c} h = 0   \right\rbrace .
\end{equation}
From this, we define the following sets:
\begin{equation}
\rmH := \mathrm{H}_1  \times \mathrm{H}_2  \times \mathrm{H}_3  \times \mathrm{H}_4 ,
\end{equation}
\begin{equation}
\rmK :=  
\mathrm{K}_1 \times \mathrm{K}_2 \times \mathrm{K}_3 \times \mathrm{K}_4 \times \mathrm{K}_5 .
\end{equation}
Before defining the adequate operator we want to study,
we can remark that the map $T_{t}$ defined in \eqref{eq:def:Tt:new} and involved in the FSI problem, depends both on the parameter $t$  through the map $\Phi_t$ given by \eqref{eq:def:Phit} and the field $\dsp^t$.
To make a distinction between these two dependencies, we introduce the following map  defined from $\RR_+ \times \mrm{H}_3$ to $( H^{3}(\Omega_0^c) )^2$ by
\begin{equation}
\label{eq:def:T-t-up-w-down}
\Tupt_w :=  \Phi_t + \mathcal{R} \gamma (w)    ,
\quad 
\forall t \geq 0 , \ \forall w \in \mrm{H}_3 .
\end{equation}
In this manner, the map $\Tupt_w$ depends on functions $w$ belonging to the fixed space $\mrm{H}_3$, 
and we have furthermore that 
\begin{equation}\label{eq:T_o_phi}
\eT_t \circ \Phi_t = \Tupt_{\dsp^t}.
\end{equation}

Let us denote by $\X^t$ the vector of $\rmH$ solution of the FSI problem defined for all $t\geq 0$ by
\begin{equation}
\X^t := ( \vcT^t , \prT^t, \dsp^{t} , \LagVol^t ) , 
\end{equation} 
while 
\begin{equation}
\X = (v,q,w,s)  \quad  \text{ and } \quad \Y = (\vcTDual,\dspDual,\prTDual,\LagVolDual)
\end{equation}
stand for arbitrary vectors of $\rmH$. 
The FSI coupling problem 
\eqref{pbm:FSI:t:domaine-fixe}
leads us to define the following operator.
Let 
\begin{equation}
\rrF : \RR \times \rmH \rightarrow \rmK    
\end{equation}
be the map defined by
\begin{equation}
\label{eq:def:rFF-map:diff}
\left\lbrace
\begin{aligned}
\rrF_1 ( t , \X ) &:=  - \nu\div ( (\gd v ) F( \Tupt_w  )) 
+ G( \Tupt_w )\gd  q  
- (f\circ \Tupt_w ) J( \Tupt_w  ) , \\
\rrF_2 ( t , \X ) &:=  \div   ( G( \Tupt_w  )^{\tp} v )  , \\
\rrF_3 ( t , \X ) &:=  - \mu \div ( (\gd w ) F(\Phi_t) ) 
+ G(\Phi_t) \gd s - J(\Phi_t) ( g\circ \Phi  _t ) , \\
\rrF_4 ( t , \X ) &:=  \div ( G(\Phi_t)^\top  w ), \\
\rrF_5 ( t , \X ) &:=  \left[  \mu (\gd w ) F(\Phi_t)
- s G(\Phi_t)   
-  \nu ( \gd v ) F( \Tupt_w )  + q  G( \Tupt_w )   \right] n_{0}, 
\end{aligned}
\right.
\end{equation}
where we recall that $F( \Tupt_w )$, $G( \Tupt_w  )$, and $J( \Tupt_w  )$ are given by expressions \eqref{def:F(Tt)}, \eqref{def:G(Tt)}, and \eqref{def:J(Tt)} respectively. 
As we said, 
for $t=0$, 
the vector $\X^0  = ( \vcT^0 , \prT^0 , \dsp^0 , \LagVol^0 )$ is the solution of the coupling FSI problem 
\eqref{pbm:FSI:t:domaine-fixe}
posed on $\Omega_0$ and $\Omega_0^c$. 
Thus by definition \eqref{eq:def:rFF-map:diff} of $\rrF$, we have $\rrF (0,\X^0) = 0$. 
From there, we want to apply the Implicit Functions Theorem to $\rrF$, by showing that:
\begin{enumerate}
\item $\rrF$ is of class $C^1$ in a neighbourhood of $(0,\X^0)$, \label{hyp:1}
\item $D_{\X} \rrF (0,\X^0)$ is a bi-continuous isomorphism.
\label{hyp:2}
\end{enumerate}
In this case, by uniqueness of the FSI problem, we will have as result that the map $t \mapsto \X^t$ is of class $C^1$ in a neighbourhood of $(0, \X^0)$.

\subsubsection{Step (1).}
We first show that the map $\rrF$ is of class $C^1$ in a neighbourhood of $(0,\X^0)$. Obviously, $\rrF=\rrF(t,v,q,w,s)$ is of class $C^1$ with respect to $v$, $q$ and $s$ since it is linear with  these variables. So we only have to check that $\rrF$ is also of class $C^1$ with $t$ and $w$.
We have that the map
$(t, \dspTestb ) \in \mathbb{R}_+ \times H^{3}(\Omega_0) \mapsto \gd ( \Phi_t + \mathcal{R}\gamma ( \dspTestb ) ) \in H^{2} (\Omega_0^c)$ 
is of class $C^{\infty}$. Indeed, $\dspTestb \mapsto \mathcal{R}\gamma ( \dspTestb )$ is linear and continuous and $t\mapsto \Phi_t $ is affine since
$\Phi_t := \id_{\RR^2}  + t V$ with $V \in  ( H^{3}(\RR^2)  )^2 $.
We can also show that $A\in ( H^2 (\Omega_0^c)  )^{2\times 2} \mapsto A^{-1} \in ( H^2 (\Omega_0^c) )^{2\times 2}$ is of class $C^{\infty}$ in a neighbourhood of the identity matrix $\Id$.  
Thus, the maps $t\mapsto  J(\Phi_t) \in  H^2 (\Omega_0^c) $ and $t\mapsto 
(\gd \Phi_t)^{-1}
\in ( H^2 (\Omega_0^c) )^{2\times 2}$ are $C^\infty$.
Finally, from Lemma \ref{lem:regu:JGF}, we have that $(t, \dspTestb ) \in \mathbb{R}_+ \times H^3 (\Omega_0) \mapsto F( \Tupt_w )$, $G( \Tupt_w ) \in (H^2 (\Omega_0^c) )^{2\times 2}$, and $J( \Tupt_w ) \in H^2 (\Omega_0^c) $ are of class $C^{\infty}$.
Finally, because of the regularity of $f \in (H^2 (\RR^2))^2$ and $g \in (H^2 (\RR^2))^2$, we have from Lemma 5.3.9 in \cite{henrot_pierre2018shapo_geom} that $(t, \dspTestb ) \mapsto (f\circ \Tupt_w ) J( \Tupt_w  ) $ and $(t, \dspTestb ) \mapsto 
 J(\Phi_t)  ( g\circ \Phi  _t )$ are $C^1$ in the vicinity of $0$.

\subsubsection{Step (2).}

We calculate the following element of $\rmK$:
\begin{align}
D_{\X}  \rrF (0, \X^0)  \X & =  
\begin{pmatrix}
   D_{\X} \rrF_1(0, \X^0)  \X  \\ 
   D_{\X} \rrF_2(0, \X^0)  \X  \\
   D_{\X} \rrF_3(0, \X^0)  \X  \\
   D_{\X} \rrF_4(0, \X^0)  \X  \\ 
   D_{\X} \rrF_5(0, \X^0)  \X 
\end{pmatrix}^{\tp}
,
\end{align}
for a $\X = (v,q,w,s)$ in $\rmH$, given from its components by
\begin{align}
D_{\X} \rrF_1(0, \X^0)  \X & = 
- \nu \div ( ( \gd  v   ) F( \eT^0 ) )
- \nu \div ( ( \gd \vcT^0 ) D_{w} F( \eT^0 )w ) , \notag \\
& 
+ G( \eT^0 )  \nabla q 
+ ( D_{w} G( \eT^0 )w )  \nabla \prT^0
- D_{w} ( J( \eT^0 ) f { \circ \eT^0 } ) w 
\\
D_{\X} \rrF_2(0, \X^0)  \X & = 
   \div ( G( \eT^0 )^{\tp}   v )
+  \div ( (D_{w} G( \eT^0 )w) \vcT^0 ) , 
\\
D_{\X} \rrF_3(0, \X^0)  \X & =  
- \mu \div (   \gd w   )   
+  \nabla s  ,
\\
D_{\X} \rrF_4(0, \X^0)  \X & =  
  \div ( w ) ,
\\
D_{\X} \rrF_5(0, \X^0)  \X & = 
[\mu  \gd  w   
- s \Id  
-\nu (\gd v ) F( \eT^0 ) 
- \nu (\gd \vcT^0 ) ( D_{w} F( \eT^0 )w )] n_{0}  \notag \\
&-[ 
 q  G( \eT^0 )
+ \prT^0  ( D_{w} G( \eT^0 )w  )
] n_{0} , 
\end{align}
where $\eT^0 := \id_{\RR^2} + \mathcal{R} \gamma ( \dsp^0 ) $,  where the expressions of $( D_{w} J( \eT^0 )w )$, $( D_{w} G( \eT^0 )w )$, and  $( D_{w} F( \eT^0 )w )$ 
are given in Appendix \ref{sec:derivJGF} with expressions \eqref{eq:DJbar}, \eqref{eq:DGbar}, and \eqref{eq:DFbar} respectively, and with
\begin{equation}
     D_{w} ( J( \eT^0 ) f { \circ \eT^0 } ) w 
     := ( D_{w} J( \eT^0 )w )  ( f \circ \eT^0 )
     + J( \eT^0 ) ( \gd f \circ \eT^0 ) \gd \eT^0 .
\end{equation}

Let 
$ \mathscr{F}  = ( 
\mathscr{F}_1 , 
\mathscr{F}_2 , 
\mathscr{F}_3 ,
\mathscr{F}_4 ,
\mathscr{F}_5   ) \in \rmK$, 
we want to show that there exists a unique $\X = (v,q,w,s) \in \rmH$ such that:
\begin{equation}\label{eq:isomorph_system}
D_{\X} \rrF (0,\X^0) \X 
= \mathscr{F}.
\end{equation}
This amounts to solving the following problem: find $(v,q,w,s) \in \rmH$ such that
\begin{equation}
\label{eq:pbm:diff-diffeom}
\left\{  
\begin{aligned}
- \nu\div ( (\gd v ) F( \eT^0  )) + G( \eT^0 )\gd  q  & =  \Bf_1 (w) + \mathscr{F}_1  && \text{ in }  \Omega_{0}^c, \\
\div   ( G( \eT^0  )^{\tp} v  )  & =  \Bf_2 (w) + \mathscr{F}_2  &&  \text{ in }   \Omega_{0}^c ,  \\
v & =  0 && \text{ on }   \partial \Omega_{0}^c , \\
 -  \mu \div  ( \gd w ) + \gd s  & =  \Bf_3 (w) + \mathscr{F}_3 && \text{ in }  \Omega_{0}, \\ 
\div w & =  \Bf_4 ( w ) + \mathscr{F}_4  && \text{ in }  \Omega_0 ,\\
w  & = 0 && \text{ on }  \Gamma_{\omega} ,\\
\left(
  \mu\gd w  - s \Id  -  \nu ( \gd v ) F( \eT^0 ) + q  G( \eT^0 )  \right)n_{0} &  =   \Bf_5 (w) + \mathscr{F}_5 && \text{ on }  \Gamma_{0} , 
\end{aligned}
\right.
\end{equation}
where the maps $\Bf_j$ for $j=1,\cdots 5$, are respectively linear forms from $\mathrm{H}_3 $ to $\mrm{K}_j$, given by $\Bf_3 \equiv \Bf_4 \equiv 0$, and 
\begin{align}
\Bf_1 ( w ) &:=
\nu \div ( ( \gd \vcT^0 ) D_{w} F( \eT^0 )w )  
- ( D_{w} G( \eT^0 )w )  \nabla \prT^0
+ D_{w} ( J( \eT^0 ) f { \circ \eT^0 } ) w  ,
\label{eq:diff:j1}
\\
\Bf_2 ( w ) &:= 
- \div ( (D_{w} G( \eT^0 )w) \vcT^0 ) ,\label{eq:diff:j2}
\\
\Bf_5 ( w ) &:= 
[ \nu (\gd \vcT^0 ) ( D_{w} F( \eT^0 )w )
- \prT^0  ( D_{w} G( \eT^0 )w  )
] n_{0} .\label{eq:diff:j5}
\end{align}
Let $\rmb \in H^{3}(\Omega_0)$ be an arbitrary field. 
In order to prove that Problem \eqref{eq:pbm:diff-diffeom} admits a unique solution, we introduce the following parametrized problem with $\rmb$:
\begin{equation}
\label{eq:pbm:diff-diffeom-b} 
\left\lbrace  
\begin{aligned}
- \nu\div ( (\gd v(\rmb) ) F( \eT^0  )) + G( \eT^0 )\gd  q(\rmb)  & =  \Bf_1 ( \rmb ) + \mathscr{F}_1  
&& \text{ in }  \Omega_{0}^c, \\
\div   ( G( \eT^0  )^{\tp} v(\rmb)  )  & =  \Bf_2 ( \rmb ) + \mathscr{F}_2 
&&  \text{ in }   \Omega_{0}^c ,  \\
v(\rmb) & =  0 
&& \text{ on }  \partial \Omega_{0}^c , \\
- \mu \div  (  \gd   w(\rmb) ) + \gd s(\rmb)  & =  \mathscr{F}_3 
&& \text{ in }  \Omega_{0}, \\ 
\div w(\rmb) & =  \mathscr{F}_4 
&& \text{ in }  \Omega_0 ,\\
w(\rmb)  & =  0 
&& \text{ on }  \Gamma_{\omega} ,\\
\left(
\mu \gd w(\rmb) - s(\rmb) \Id  -  \nu ( \gd v(\rmb) ) F( \eT^0 ) + q(\rmb)  G( \eT^0 )  \right)n_{0} &  =   \Bf_5 ( \rmb ) + \mathscr{F}_5 
&& \text{ on }  \Gamma_{0} .
\end{aligned}
\right.
\end{equation}
%
%
%
%
%

In the same way as done in Section \ref{sec:S_constraction}, we 
can prove that for any $\rmb \in (H^{3}(\Omega_0))^2$, there exists a unique solution $( v (\rmb) , q (\rmb) , w (\rmb) ,  s (\rmb) ) \in \rmH$ to this problem, allowing us to define a map $\contract ( \rmb ) = w (\rmb)$.  
Indeed, the fields $\Bf_1 ( w )$, $\Bf_2 ( w )$, and $\Bf_5 ( w )$ have the required regularity to apply consecutively Theorems \ref{thm:cattabriga-perturbe} and \ref{thm:resol_stokes_mixedBC}, and $\Bf_2 ( w )$ together with $\mathscr{F}_2$ satisfy the compatibility condition \eqref{eq:compatibility-condition:2}. 
Moreover, since $\dsp^0$ is the displacement solution of the coupling FSI problem \eqref{pbm:FSI:t:domaine-fixe} for $t=0$, 
we have that $\eT^0 := \id_{\RR^2} + \mathcal{R} \gamma ( \dsp^0 ) $ is such that $F( \eT^0  )$ and $G( \eT^0  )$ satisfy the assumption \eqref{eq:stokes-norm-proche-idt} of Theorem \ref{thm:cattabriga-perturbe}. 
Thus applying consecutively Theorems \ref{thm:cattabriga-perturbe} and \ref{thm:resol_stokes_mixedBC}, we obtain a unique solution $( v (\rmb) ,  q (\rmb) ,  w (\rmb) , s (\rmb) ) \in \rmH$ to Problem \eqref{eq:pbm:diff-diffeom-b}.

Now we want to show that this map is a contraction for data $f$ and $g$ small enough.
Let $\rmb_1$ and  $\rmb_2$ be in 
$ ( H^3(\Omega_0) )^2$. 
We set $\boldsymbol{\delta} v :=  v (\rmb_1) - v (\rmb_2) $, 
$\boldsymbol{\delta} q := q (\rmb_1) - q (\rmb_2) $, 
$\boldsymbol{\delta} w := w (\rmb_1) - w (\rmb_2) $,
and $\boldsymbol{\delta} s  := s (\rmb_1) - s (\rmb_2) $. 
By linearity of Problem \eqref{eq:pbm:diff-diffeom-b},
and applying Theorem \ref{thm:cattabriga-perturbe} for $( \boldsymbol{\delta} v , \boldsymbol{\delta} q )$ and Theorems \ref{thm:resol_stokes_mixedBC} for 
$( \boldsymbol{\delta} w , \boldsymbol{\delta} s )$, we have
\begin{equation}
\lVert \boldsymbol{\delta} v  \rVert_{3,2,\Omega_0^c} + \lVert \boldsymbol{\delta} q  \rVert_{2,2,\Omega_0^c}
\leq
C_{\mrm{F}} (  
\lVert \Bf_1 ( \rmb_1 - \rmb_2 )   \rVert_{1,2,\Omega_0^c}  
+ \lVert \Bf_2 ( \rmb_1 - \rmb_2 )   \rVert_{2,2,\Omega_0^c}  ) ,
\end{equation}
and 
\begin{equation}
\lVert  \boldsymbol{\delta}  w  \rVert_{3,2}
+ \lVert  \boldsymbol{\delta} s \rVert_{2,2}
\leq C_{\mrm{s}}  (
\lV \Bf_5 ( \rmb_1 - \rmb_2 ) \rV_{H^{3/2} (\Gamma_0)}  
+ C ( \lVert \boldsymbol{\delta} v  \rVert_{3,2,\Omega_0^c} + \lVert \boldsymbol{\delta} q  \rVert_{2,2,\Omega_0^c} )
),
\end{equation} 
where $C_{\mrm{F}}$, $C_{\mrm{s}}$ depend only on $\Omega_0$ and $C_1$, $C_2$ in \eqref{eq:point-fixe:05bis}, \eqref{eq:point-fixe:05}.
We can see in expressions \eqref{eq:diff:j1}, \eqref{eq:diff:j2}, and \eqref{eq:diff:j5}, by using Lemma \ref{lem:algebra} and same kind of estimations that those written in Section \ref{sec:S_constraction}, that the norms of the linear maps $\Bf_1$, $\Bf_2$, and $\Bf_5$ are bounded by the norms of $\vcT^0$, $\prT^0$, and the volume force $f$. 
Yet, from Theorem \ref{thm:FSI:pt-fixe}, we have that
\begin{equation}
\lV \vcT^0 \rV_{3,2,\Omega^c_0}
+ \lV \prT^0  \rV_{2,2,\Omega^c_0}
+ \lV \dsp^0 \rV_{3,2,\Omega_0}
+ \lV \LagVol^0 \rV_{2,2,\Omega_0} 
\leq 
C_{FS}
(\lV f \rV_{2,2, \RR^2 }
+ \lV g \rV_{1,2, \Omega_0 }) .
\end{equation}
Then we can choose the data $f$ and $g$ of our problem small enough so that $\contract$ is a contraction on $(H^{3}(\Omega_0))^2$. 
Therefore, $\contract$ admits a unique fixed point showing that Problem \eqref{eq:isomorph_system} has a unique solution $\X = (v, q, w, s) \in \rmH$. 

Finally, from Problem \eqref{eq:pbm:diff-diffeom} we have the following estimates:
\begin{equation}
    \lVert v  \rVert_{3,2,\Omega_0^c} + \lVert q  \rVert_{2,2,\Omega_0^c} +
\leq
C_{\mrm{F}} \Big[ 
\sum_{i=1}^2   {\lVert \mathscr{F}_i  \rVert}_{K_i}
+ (\lVert  \Bf_1 \rVert_{\mathcal{L}(\mrm{H}_3 , \mrm{K}_1 )}
+ \lVert  \Bf_2 \rVert_{\mathcal{L}(\mrm{H}_3 , \mrm{K}_2 )}
)
\lVert   w  \rVert_{3,2,\Omega_0}
\Big] ,
\label{eq:apriori-estimate:diff:1}
\end{equation}
and
\begin{align}
    \lVert   w  \rVert_{3,2,\Omega_0}
+ \lVert s \rVert_{2,2,\Omega_0}
 \leq
C_{\mrm{s}} \Big[  
\sum_{i=3}^5   {\lVert \mathscr{F}_i  \rVert}_{K_i}
& +  \lVert  \Bf_5 \rVert_{\mathcal{L}(\mrm{H}_3 , \mrm{K}_5 )}
\lVert   w  \rVert_{3,2,\Omega_0} 
 \notag \\
& 
+  C ( \lVert v  \rVert_{3,2,\Omega_0^c} + \lVert q  \rVert_{2,2,\Omega_0^c} )
\Big] .
\label{eq:apriori-estimate:diff:2}
\end{align}
Once again, $\lVert  \Bf_1 \rVert_{\mathcal{L}(\mrm{H}_3 , \mrm{K}_1 )}$, $\lVert  \Bf_2 \rVert_{\mathcal{L}(\mrm{H}_3 , \mrm{K}_2 )}$, and $\lVert  \Bf_5 \rVert_{\mathcal{L}(\mrm{H}_3 , \mrm{K}_5 )}$ can be chosen small enough so that combining \eqref{eq:apriori-estimate:diff:1} and \eqref{eq:apriori-estimate:diff:2}, we obtain that the solution $\X = (v, q, w s) \in \rmH$ of the linear elliptic system  \eqref{eq:isomorph_system} (see also \eqref{eq:pbm:diff-diffeom}), satisfies the following estimate
\begin{equation}
    \lVert v \rVert_{3,2,\Omega_0^c} + \lVert q  \rVert_{2,2,\Omega_0^c} +
    \lVert   w  \rVert_{3,2,\Omega_0}
+ \lVert s \rVert_{2,2,\Omega_0}
\leq
C \sum_{i=1}^5   {\lVert \mathscr{F}_i  \rVert}_{K_i}  \ ,
\end{equation}
where $C$ is a positive constant depending on the norms of  $(\vcT^0,\prT^0,\dsp^0,\LagVol^0)$, $f$ and $g$.
Then,
$D_{\X} \rrF (0,\X^0)$ is a bi-continuous isomorphism.

\end{proof}

\section{Shape derivative of $\SHPfun (\Omega) $}
\label{sec:shp-der}

\subsection{Direct calculus}
\label{sec:shader:costfunc}

In this paragraph, we compute the shape derivative of functionals depending on the FSI problem.
In the last part, we give an example of an energy type functional.

We consider a functional of the form
\begin{equation}
\label{eq:def:grnl:shpfun}    
\SHPfun (\Omega_0) \hspace{-0.5ex}
= \hspace{-0.5ex} \SHPfun _S (\Omega_0) + \SHPfun _F(\Omega_0) \hspace{-0.5ex}
= \hspace{-0.5ex} \int_{\Omega_0} \hspace{-0.5ex} j_S (Y, \dsp(Y), \gd \dsp(Y)) \,\mathrm{d}Y
+ \int_{\Omega_F} \hspace{-0.5ex} j_F (x, \vct(x), \gd \vct(x)) \,\mathrm{d}x ,
\end{equation}
where $j_S$ and $j_F$ are differentiable functions. 
As we have done in the previous Section, we consider a 1-parameter family of shapes $\Omega_{0,t}$ defined in \eqref{eq:def:Omega0_t}.

Performing the shape derivative of $\SHPfun$ with respect to the deformation chosen amounts to compute the derivative of $t\mapsto \SHPfun(\Omega_{0,t})$ at $t=0$.

The shape functional evaluated on the domain $\Omega_{0,t}$ is given by:
\begin{align}
\SHPfun (\Omega_{0,t}) = \SHPfun _S (\Omega_{0,t}) + \SHPfun _F(\Omega_{0,t})
& = \int_{\Omega_{0,t}} j_S (Y, \dsp_t (Y), \gd \dsp_t (Y)) \,\mathrm{d}Y \notag\\
&+ \int_{\Omega_{F,t}} j_F (x, \vct_t (x), \gd \vct_t (x)) \,\mathrm{d}x .\notag
\end{align} 
where $(\dsp_t,\vct_t)$ are the solution fields of the FSI problem \eqref{eq:complete-syst-stokes-stokes-transported-shape-transf1},\eqref{eq:complete-syst-stokes-stokes-transported-shape-transf2}.

Let us first compute the derivative of $\SHPfun_S(\Omega_{0,t})$. 
After transporting the integral from $\Omega_{0,t}$ to $\Omega_0$, we obtain
\begin{align}
\label{eq:gnrl:shpfun:1}
\SHPfun _S (\Omega_{0,t})  
& = \int_{\Omega_{0 }} j_S \left( \Phi_t (Y ) , \dsp_t \circ \Phi_t (Y ), (\gd \dsp_t ) \circ \Phi_t (Y) \right) \det ( \gd \Phi_t) \,\mathrm{d}Y  .
\end{align}
Thus the shape derivative of $\SHPfun_S$ is given by
\begin{align}\label{eq:shape_deriv_J1}
\SHPfun _S' (\Omega_0) 
&= \int_{\Omega_0} j_S (Y, \dsp(Y), \gd \dsp(Y)) \div V \,\mathrm{d}Y \notag\\
&+ \int_{\Omega_0} D_1 j_S (Y, \dsp(Y), \gd \dsp(Y)) V \,\mathrm{d}Y \notag\\
&+ \int_{\Omega_0} D_2 j_S (Y, \dsp(Y), \gd \dsp(Y)) \dot{\dsp}\,  \mathrm{d}Y  \notag\\
&+ \int_{\Omega_0} D_3 j_S (Y, \dsp(Y), \gd \dsp(Y)) (\gd \dot{\dsp} - \gd \dsp \gd V) \,\mathrm{d}Y ,
\end{align}
where $\dot{\dsp}$ is the material derivative of $\dsp_t$ at $t=0$, defined by 
\begin{align}
\label{eq:def:matder:w}
\dot{\dsp}  := \frac{\mathrm{d}}{\mathrm{d}t} \Big\vert_{t=0}  (  \dsp^t) 
 = \frac{\mathrm{d}}{\mathrm{d}t} \Big\vert_{t=0}  (  \dsp_t \circ \Phi  _t),
\end{align} 
and $D_1$, $D_2$, $D_3$ stand for the differential on each argument of $\shpFUN_S$. 
In \eqref{eq:shape_deriv_J1}, we have used the relation
\begin{equation}\label{eq:ddt(det)_divV}
\frac{\mathrm{d}}{\mathrm{d} t} \Big\vert_{t=0}  \det ( \gd \Phi_t ) = \div V
\end{equation}
with the definition \eqref{eq:def:Phit} of $\Phi_t$ (see \eqref{eq:Diff:det} in Appendix \ref{sec:FSI:appendix}).
 \\

Secondly we consider the  shape derivative of $\SHPfun_F$ with respect to $t$. 
We perform a change of variable $x = \eT_{t} \circ \Phi  _t (X)$, in order to rewrite the integrals from $\Omega_{F,t}$ to $\Omega_{0}^c$. 
This gives 
\begin{align}
\label{eq:shader:gnrl:1}
\SHPfun _F(\Omega_{0,t})
& = \int_{\Omega_{0}^c} \Big( j_F ( \eT_{t} \circ \Phi_t (X), \vct_t \circ \eT_{t} \circ \Phi_t (X), (\gd \vct_t ) \circ \eT_{t} \circ \Phi_t (X) ) \notag\\
& \hspace{6.5cm} \det ( \gd ( \eT_{t} \circ \Phi_t (X) ))  \Big) \,\mathrm{d}X .
\end{align}
We calculate the shape derivative of $\SHPfun _F$, setting 
\begin{equation}
\vcT = \vct \circ \eT 
\end{equation}
where $T=T_0=\id_{\RR^{2}} +R\gamma(\dsp)$.
This gives 
\begin{align}
\SHPfun _F' (\Omega_0)
&= \int_{\Omega_0^c} j_F(\eT,\vcT,\gd \vcT (\gd \eT)^{-1})   \tr(\cof(\nabla \eT)^{\tp}\nabla\dot{\eT})  \,\mathrm{d}X  \notag\\
&+ \int_{\Omega_0^c} D_1 j_F(\eT,\vcT,\gd \vcT (\gd \eT)^{-1}) \dot{\eT} \det(\gd \eT)  \,\mathrm{d}X  \notag\\
&+ \int_{\Omega_0^c} D_2 j_F(\eT,\vcT,\gd \vcT (\gd \eT)^{-1}) \dot{\vcT} \det(\gd \eT) \,\mathrm{d}X  \notag\\
&+ \int_{\Omega_0^c} D_3 j_F(\eT,\vcT,\gd \vcT (\gd \eT)^{-1})  \left(\gd\dot{\vcT} - \gd \vcT(\gd \eT)^{-1}\gd\dot{\eT} \right)\cof(\nabla \eT)^{\tp} \,\mathrm{d}X  , 
\label{eq:shader:gnrl:2}
\end{align}
where we denote by 
$\dot{\vcT}$ the material derivative of $\vcT$ defined by
\begin{align}
\label{eq:def:matder:v}
\dot{\vcT} & :=  \frac{\mathrm{d}}{\mathrm{d}t} \Big\vert_{t=0}  (   \vcT^t )
= \frac{\mathrm{d}}{\mathrm{d}t} \Big\vert_{t=0}  ( \vcT_t \circ \Phi  _t) ,
\end{align}
and by  
$\dot{\eT}$ the material derivative of $\eT_t$  defined by 
\begin{align}
\label{eq:def:matder:T}
\dot{\eT}  &= \frac{\mathrm{d}}{\mathrm{d}t} \Big\vert_{t=0} \left( \eT_{t}\circ\Phi  _{t} \right) .
\end{align}
From the definitions of $\eT_t$ in \eqref{eq:def:Tt:new} and of $\dot{\eT}$, we have 
\begin{equation}
\dot{\eT}  = V +  \mathcal{R}\gamma ( \dot{\dsp} ) .
\end{equation}
The term $\tr(\cof(\nabla \eT)^{\tp}\nabla\dot{\eT})$ in \eqref{eq:shader:gnrl:2} comes from the differentiation of $\det ( \gd ( \eT_{t} \circ \Phi_t (X) ))$ in \eqref{eq:shader:gnrl:1}. 
The terms $ \dot{\eT}$ and $\dot{\vcT} $ in \eqref{eq:shader:gnrl:2} are respectively the results of the differentiation through the chain rule of the terms $ \eT_{t} \circ \Phi_t (X) $ and $  \vct_t \circ \eT_{t} \circ \Phi_t (X)$ in  \eqref{eq:shader:gnrl:1}. 
For the last term $(\gd\dot{\vcT} - \gd \vcT(\gd \eT)^{-1}\gd\dot{\eT} )\cof(\nabla \eT)^{\tp}$ in \eqref{eq:shader:gnrl:2} deriving from $(\gd \vct_t ) \circ \eT_{t} \circ \Phi_t (X)$ in \eqref{eq:shader:gnrl:1}, we can write
\begin{align}
(\gd \vct_t ) \circ \eT_{t} \circ \Phi_t (X) 
&= (\gd ( \vct_t  \circ \eT_{t} \circ \Phi_t) )  (X)  (\gd (\eT_t \circ \Phi_t ))^{-1}  (X) , \notag \\
&  = (\gd ( \vcT_t  \circ \Phi_t) )  (X)  (\gd (\eT_t \circ \Phi_t ))^{-1}  (X) ,
\label{eq:shader:gnrl:3}
\end{align}
with $\vcT_t = \vct_t \circ \eT_t$ (see \eqref{def:vt-}). 
From there, we can write in the following proposition the formula of the shape derivative $\SHPfun ' (\Omega_0 )$ of the abstract shape functional $\SHPfun (\Omega_0 ) $
defined by \eqref{eq:def:grnl:shpfun}.

\begin{prop}
Let $\SHPfun$ be the shape functional defined by \eqref{eq:def:grnl:shpfun}, where $j_S$ and $j_F$ are differentiable functions. 
Let $V$ be a velocity field belonging to the space $\Theta$ introduced in \eqref{eq:def:THETA}.
Then, the shape derivative of $\SHPfun$ in the direction $V$ computed at $\Omega_0$ is given by 
\begin{align}
\SHPfun ' (\Omega_0 ) 
&= \int_{\Omega_0} j_S (Y, \dsp , \gd \dsp ) \div V \,\mathrm{d}Y 
+ \int_{\Omega_0} D_1 j_S (Y, \dsp , \gd \dsp ) V \,\mathrm{d}Y \notag\\
&+ \int_{\Omega_0} D_2 j_S (Y, \dsp , \gd \dsp ) \dot{\dsp} \,\mathrm{d}Y 
%
+ \int_{\Omega_0} D_3 j_S (Y, \dsp , \gd \dsp ) (\gd \dot{\dsp} - \gd \dsp \gd V) \,\mathrm{d}Y \notag\\
&+ \int_{\Omega_0^c} j_F(\eT,\vcT,\gd \vcT (\gd \eT)^{-1})   \tr(\cof(\nabla \eT)^{\tp}\nabla\dot{\eT})  \,\mathrm{d}X  \notag\\
&+ \int_{\Omega_0^c} D_1 j_F(\eT,\vcT,\gd \vcT (\gd \eT)^{-1}) \dot{\eT} \det(\gd \eT)  \,\mathrm{d}X  \notag\\
&+ \int_{\Omega_0^c} D_2 j_F(\eT,\vcT,\gd \vcT (\gd \eT)^{-1}) \dot{\vcT} \det(\gd \eT) \,\mathrm{d}X  \notag\\
&+ \int_{\Omega_0^c} D_3 j_F(\eT,\vcT,\gd \vcT (\gd \eT)^{-1})  \left(\gd\dot{\vcT} - \gd \vcT(\gd \eT)^{-1}\gd\dot{\eT} \right)\cof(\nabla \eT)^{\tp} \,\mathrm{d}X  .
\label{eq:shpder:final}
\end{align}
\end{prop}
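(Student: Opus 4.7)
The plan is to split the functional into its structure and fluid contributions, $\SHPfun = \SHPfun_S + \SHPfun_F$, and differentiate each at $t=0$ after a suitable change of variables onto a fixed reference domain. This reduces the computation to differentiating under the integral sign, a step which is justified by the $C^1$-regularity of the map $t \mapsto (\vcT^t, \prT^t, \dsp^t, \LagVol^t)$ established in Theorem \ref{thm:mainofsection4}, together with the smoothness of $j_S$ and $j_F$ and of $t \mapsto \Phi_t$.

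For $\SHPfun_S(\Omega_{0,t})$, I would transport the integral from $\Omega_{0,t}$ to $\Omega_0$ via $Y \mapsto \Phi_t(Y)$, obtaining \eqref{eq:gnrl:shpfun:1}, whose integrand involves $\Phi_t$, $\dsp_t \circ \Phi_t$, $(\gd \dsp_t) \circ \Phi_t$ and $\det(\gd \Phi_t)$. At $t=0$ we have $\Phi_0 = \id_{\RR^2}$ and $\gd \Phi_0 = \Id$; the derivative of the Jacobian determinant is $\div V$ by \eqref{eq:ddt(det)_divV}; the chain rule on the first slot yields $D_1 j_S \cdot V$; and the material derivative of $\dsp_t \circ \Phi_t$ is $\dot{\dsp}$ by definition \eqref{eq:def:matder:w}. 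The only nontrivial term is $(\gd \dsp_t) \circ \Phi_t$: rewriting it as $\gd(\dsp_t \circ \Phi_t)(\gd \Phi_t)^{-1}$ and differentiating at $t=0$ gives $\gd \dot{\dsp} - \gd \dsp \, \gd V$, since $\frac{d}{dt}\big|_{t=0} (\gd \Phi_t)^{-1} = -\gd V$. Summing these four contributions produces the first four integrals in \eqref{eq:shpder:final}.

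For $\SHPfun_F(\Omega_{0,t})$, I would transport the integral from $\Omega_{F,t}$ to $\Omega_0^c$ via $x \mapsto \eT_t \circ \Phi_t(X)$, arriving at \eqref{eq:shader:gnrl:1}. The integrand now involves $\eT_t \circ \Phi_t$, $\vct_t \circ \eT_t \circ \Phi_t = \vcT_t \circ \Phi_t$ (recall \eqref{def:vt-}), $(\gd \vct_t) \circ \eT_t \circ \Phi_t$ and $\det(\gd(\eT_t \circ \Phi_t))$. Jacobi's formula gives the derivative of the determinant as $\tr(\cof(\gd \eT)^{\tp} \gd \dot{\eT})$; the chain rule in the first and second slots yields the material derivatives $\dot{\eT}$ and $\dot{\vcT}$ from \eqref{eq:def:matder:T} and \eqref{eq:def:matder:v}; and for the third slot one uses the identity \eqref{eq:shader:gnrl:3} to write $(\gd \vct_t)\circ \eT_t \circ \Phi_t = \gd(\vcT_t \circ \Phi_t)\bigl(\gd(\eT_t \circ \Phi_t)\bigr)^{-1}$. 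Differentiating this product at $t=0$ and combining with the Jacobian factor $\det(\gd \eT)$ produces
\[
\bigl(\gd \dot{\vcT} - \gd \vcT (\gd \eT)^{-1} \gd \dot{\eT}\bigr)(\gd \eT)^{-1}\det(\gd \eT) = \bigl(\gd \dot{\vcT} - \gd \vcT (\gd \eT)^{-1} \gd \dot{\eT}\bigr) \cof(\gd \eT)^{\tp},
\]
thanks to the identity $\cof(A)^{\tp} = \det(A) A^{-1}$.

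The main technical hurdle is to rigorously justify the differentiation under the integral sign: one needs uniform integrable bounds for the difference quotients. This follows from the $C^1$-regularity from Theorem \ref{thm:mainofsection4} in Sobolev spaces that, by the embeddings $H^3 \hookrightarrow W^{1,\infty}$ and $H^2 \hookrightarrow L^\infty$ in dimension two, control the nonlinear compositions appearing in the integrands uniformly for $t$ in a neighborhood of $0$; together with the $C^1$-smoothness of $\Phi_t$ and the assumed differentiability of $j_S$ and $j_F$, this allows passing to the limit as $t \to 0^+$. Adding the two contributions then yields the announced formula \eqref{eq:shpder:final}.
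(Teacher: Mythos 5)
Your proposal is correct and follows essentially the same route as the paper: transport $\SHPfun_S$ to $\Omega_0$ via $\Phi_t$ and $\SHPfun_F$ to $\Omega_0^c$ via $\eT_t\circ\Phi_t$, then differentiate at $t=0$ using Jacobi's formula, the chain rule for the composed slots, the identity $(\gd \vct_t)\circ\eT_t\circ\Phi_t=\gd(\vcT_t\circ\Phi_t)\bigl(\gd(\eT_t\circ\Phi_t)\bigr)^{-1}$ and $\cof(A)^{\tp}=\det(A)A^{-1}$, exactly as in \eqref{eq:gnrl:shpfun:1}--\eqref{eq:shader:gnrl:3}. Your added remark on justifying differentiation under the integral sign via Theorem \ref{thm:mainofsection4} and the Sobolev embeddings is consistent with what the paper relies on implicitly.
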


\begin{ex}
\label{sec:FSI:energy:func}
Let $\vct$ and $\dsp$ be the velocity and displacement solutions of the FSI problem associated to $\Omega_0$.
 Let us consider the following energy shape functional 
\begin{equation}
\label{eq:def:energy-type:shpfun}
\SHPfun_E (\Omega_{0}) = 
\frac{1}{2}\int_{\Omega_{F }} \lvert \gs_{x }(\vct ) \rvert^{2} \mathrm{d}x  
+ \frac{1}{2} \int_{\Omega_{0 }}  \lvert \gs_{Y }(\dsp) \rvert^{2} \mathrm{d}Y   ,
\end{equation} 
where $\gs$ is defined in \eqref{eq:def:sym:matrix}, and the norm of a matrix is defined in \eqref{eq:def:matrixnorm}.

The shape derivative of $\SHPfun$ in direction $V$ evaluated at $\Omega_0$ is
\begin{align}
\SHPfun_E  ' (\Omega_{0} ) 
&=   \int_{\Omega_{0}}    \gs \dsp \Mscalp  \left( \nabla \dot{\dsp} - \nabla \dsp \nabla V \right) \, \mathrm{d}Y 
+ \frac{1}{2} \int_{\Omega_{0}}  \lvert \gs \dsp \rvert^{2}\div(V)\, \mathrm{d}Y  \notag\\
&+ \int_{\Omega_{0}^{c}}\left[ \nabla \vcT(\nabla \eT)^{-1}  \right]^s  \Mscalp    \left(\nabla \dot{\vcT} - \nabla \vcT(\nabla \eT)^{-1}\nabla\dot{\eT}  \right)\cof(\nabla \eT)^{\tp}      \mathrm{d}X  \notag\\
& \hspace{2cm} + 
\frac{1}{2} \int_{\Omega_{0}^{c}}
\left\lvert \left[ \nabla \vcT(\nabla \eT)^{-1}  \right]^s  \right\rvert^{2}  \tr(\cof(\nabla \eT)^{\tp}\nabla\dot{\eT})\, \mathrm{d}X .
\end{align}
\end{ex}

Notice that the expression \eqref{eq:shpder:final} of $\SHPfun'$ depends on the material derivatives $\dot{\vcT}$ and $\dot{\dsp}$ of the velocity and of the displacement. 
These material derivatives can be computed as solutions of boundary value problems which depend on the direction $V$ (see \cite[Section 3.4.4]{calisti2021}).  For a practical use of the shape derivative -- within a shape optimization algorithm for example -- it is suitable to find an expression which does not depend on $\dot{\vcT}$ and $\dot{\dsp}$.
For this, we apply in the next section the classical \emph{adjoint method} allowing for a simplified expression of $\SHPfun'$.

\subsection{Adjoint method, or C\'{e}a's method}
\label{sec:adjoint-state}

The \emph{adjoint method}, or \emph{C\'{e}a's method}, was introduced in \cite{cea1986calcul_rapide}. 
It allows to guess straightforwardly the \emph{adjoint states} we need to introduce in order to simplify the expression of the shape derivative.
After the introduction to this method, we apply it to the FSI problem.
We refer to \cite[Section 6.4.3]{allaire2007conception} for a more detailed presentation of this method.


\subsubsection{Presentation of the method}
\label{sec:adjoint-state:prez}

Let $\Omega_0$ be an admissible domain of the fluid-structure problem, standing for the elastic material (see Figure \ref{fig:contour}). We denote by $\Omega$ any smooth perturbation of $\Omega_0$. 
For example we can consider $\Omega = (\id_{\RR^{2}} + tV) (\Omega_0)$, for $V \in\Theta $ 
where $\Theta$ is defined in \eqref{eq:def:THETA} and for $t>0$. 
 First we define the following functional space: 
\begin{equation}
\mathbf{Q} := ( H^1_0(\Omega^c_{0}) )^{2} \times L^2_0(\Omega^c_{0}) \times ( H^1_{0, \Gamma_{\omega} } (\Omega_{0}) )^{2} \times L^2 (\Omega_{0}).
\end{equation} 
We will denote by $\X_0 = (\vcT , \prT , \dsp , \LagVol)$ the solution of the fluid-structure problem with initial datum $\Omega_0$, and by $\Y = (\vcTDual , \prTDual , \dspDual , \LagVolDual )$ a test quadruplet. 
Let us rewrite the FSI problem with these notations.
Let $\BF :  \mathcal{U}_{\mrm{ad}}\times \RR^{2 \times 2} \times  \mathbf{Q}  \times  \mathbf{Q}  \rightarrow \mathbb{R}$ be a differentiable map, bilinear on $ \mathbf{Q}  \times  \mathbf{Q} $
and $\LF : \mathcal{U}_{\mrm{ad}}\times \RR^{2 \times 2} \times  \mathbf{Q}   \rightarrow \mathbb{R}$ be a differentiable map, linear on $ \mathbf{Q} $, where $\mathcal{U}_{ad}$ is a class of admissible domains for the FSI problem. 
Finally, let $\NL :  \mathbf{Q}  \rightarrow \RR^{2 \times 2}$ be a non linear differentiable map. We consider the solution $\X_{\Omega}$ of the following problem:
\begin{equation}\label{eq:general_form1}
\text{Find } \X_{\Omega}\in \mathbf{Q}  \text{ such that: }
\hspace{1ex}
\BF \big( \Omega , \NL (\X_{\Omega}) , \X_{\Omega}, \Y \big) = 
\LF \big( \Omega , \NL (\X_{\Omega}) , \Y \big) ,
\hspace{1ex}
\forall \Y \in  \mathbf{Q} .
\end{equation}
Now we define a shape functional of the form:  
\begin{equation}
\SHPfun(\Omega) = \shpFUN ( \Omega , \X_{\Omega} ).
\quad 
\end{equation}
This suggests the definition of the following Lagrangian for all $\Omega$ and  $\forall \X , \Y \in\mathbf{Q} $:
\begin{equation}
\label{eq:def:lagrangien}
\mathcal{L} \big( \Omega ,  \X , \Y \big)
=  \shpFUN ( \Omega , \X )
+ \BF \big( \Omega , \NL (\X), \X, \Y \big) 
- \LF \big( \Omega , \NL (\X), \Y \big) .
\end{equation}
By definition we have $\forall \Y \in  \mathbf{Q} $:
\begin{equation}
\label{pbm:adjoint-A3}
\mathcal{L} \big( \Omega ,  \X_\Omega , \Y \big)
=  \shpFUN ( \Omega , \X_\Omega )
\end{equation}
Thus the shape derivative of $\shpFUN$ is 
\begin{align}
\shpFUN'(\Omega_0 , \X_0) 
&=  \frac{\partial \mathcal{L}}{\partial \Omega} \big( \Omega_0 ,  \X_0 , \Y \big)
+ \left\langle \frac{\partial \mathcal{L}}{\partial \X} \big( \Omega_0 ,  \X_0 , \Y \big) , \dot{\X_0} \right\rangle ,
\label{pbm:adjoint-A2}
\end{align}
where $\dot{\X_0} = \partial_\Omega \X_0$ is the material derivative of $\X_0$.

If the following problem admits a solution
\begin{equation}
\text{Find } \Y_0 \in \mathbf{Q}  \text{ such that: }
\quad
  \left\langle \frac{\partial \mathcal{L}}{\partial \X} \big( \Omega_0 ,  \X_0 , \Y_0 \big) , \Z \right\rangle = 0 
\quad
\forall \Z \in  \mathbf{Q} ,
\label{pbm:adjoint-A}
\end{equation}
then $\Y_0$ is called the \emph{adjoint solution}, or \emph{adjoint state}.

In general the existence of $\Y_0$ has to be proved. However, in the following we do not care about this aspect and we write (formally) the expression of $\SHPfun'$, assuming the well-posedness of \eqref{pbm:adjoint-A}. We finally have
\begin{equation}
\SHPfun'(\Omega_0) = \shpFUN'(\Omega_0 , \X_0)  =  \frac{\partial \mathcal{L}}{\partial \Omega} \big( \Omega_0 ,  \X_0 , \Y_0 \big).
\label{pbm:adjoint-A1}
\end{equation}

We can see that the shape derivative $\SHPfun'(\Omega_0)$ in expression \eqref{pbm:adjoint-A1} does not depend on the material derivative $\dot{\X_0} $, unlike expression \eqref{pbm:adjoint-A2}.
Let us give a slightly more detailed expression of $\SHPfun'(\Omega_0)$.
We develop problem \eqref{pbm:adjoint-A}:
\begin{align}
 \left\langle \frac{\partial \mathcal{L}}{\partial \X} \big( \Omega_0 ,  \X_0 , \Y \big) , \Z \right\rangle
&= 
\left\langle \frac{\partial \shpFUN}{\partial \X} \big( \Omega_0 ,  \X_0  \big) , \Z \right\rangle 
+ \BF \big( \Omega_0 , \NL (\X_0), \Z, \Y \big) \notag \\
&\hspace{-2.6cm}+
\left\langle \frac{\partial \BF}{\partial \NL}  \big( \Omega_0 , \NL (\X_0), \X_0, \Y \big)  \mathfrak{F}' (\X_0) , \Z  \right\rangle 
-
\left\langle  \frac{\partial \LF}{\partial \NL} \big( \Omega_0 , \NL (\X_0), \Y \big) \mathfrak{F}'(\X_0)  , \Z \right\rangle  ,
\end{align}
where we have used the fact that $\BF$ is linear with respect to $\Z$, and we develop  the shape derivative
\begin{align}
\frac{\partial \mathcal{L}}{\partial \Omega} \big( \Omega_0 ,  \X_0 , \Y_0 \big)
&= \frac{\partial \shpFUN}{\partial \Omega} ( \Omega_0 , \X_0 )
+ \frac{\partial \BF}{\partial \Omega} \big( \Omega_0 , \NL (\X_0), \X_0, \Y_0 \big)
- \frac{\partial \LF}{\partial \Omega} \big( \Omega_0 , \NL (\X_0),  \Y_0 \big) .
\end{align}
Finally we can write the shape derivative involving the adjoint state $\Y_0$ as follows:
\begin{align}
\label{eq:shape-der:j-A-L}
\SHPfun'(\Omega_0) 
&= \frac{\partial \shpFUN}{\partial \Omega} ( \Omega_0 , \X_0 )
+ \frac{\partial \BF}{\partial \Omega} \big( \Omega_0 , \NL (\X_0), \X_0, \Y_0 \big)
- \frac{\partial \LF}{\partial \Omega} \big( \Omega_0 , \NL (\X_0),  \Y_0 \big) .
\end{align}
Let us apply this method to the FSI problem.

\subsubsection{Shape functional and its related Lagrangian}
\label{sec:shp:and:lag}

We consider the shape functional defined by \eqref{eq:def:grnl:shpfun} that we can rewrite as
\begin{align}
\label{def:shape-func:2}
\SHPfun  (\Omega_{0 })
& = \int_{\Omega_{0 }^c} j_F (\eT   , \vcT , \gd \vcT  (\gd \eT  )^{-1}) \det(\gd \eT  ) \dLf   
+ \int_{\Omega_{0 }} j_S ( Y  , \dsp, \gd \dsp  ) \dLs  .
\end{align}
We want to explicitly construct the related Lagrangian of $\SHPfun  (\Omega_{0 })$ as in \eqref{eq:def:lagrangien}. 
Then we will turn to the calculation of its derivatives with respect to $( \vcT , \prT , \dsp , \LagVol )$ as well as with respect to the parameter $t$ which are required for computing the shape derivative of $\SHPfun  $ (see \eqref{pbm:adjoint-A2}). 
Writing $\SHPfun$ on a perturbed domain $\Omega_{0,t}$ leads to
\begin{align}
\label{def:shape-func}
\SHPfun  (\Omega_{0,t})
&= \shpFUN ( t , \vcT_t , \dsp_t ) , \notag \\
& = \int_{\Omega_{0,t}^c} j_F (\eT_{t}  , \vcT_t, \gd \vcT_t (\gd \eT_{t} )^{-1}) \det(\gd \eT_{t} ) \dLft  
+ \int_{\Omega_{0,t}} j_S ( Y_t , \dsp_t, \gd \dsp_t ) \dLst .
\end{align}
where $\eT_{t}$ is defined in \eqref{eq:def:Tt:new}, with $\dsp_t \in ( H^1_{0, \Gamma_{\omega}}  (\Omega_{0,t}) )^2$ the displacement solution of the problem \eqref{pbm:elast-incomp-t:var-form}, and where $\vcT_t \in ( H^1_0(\Omega^c_{0,t}) )^2$ is the velocity solution on the reference domain of the problem \eqref{eq:var:NS:sur-Omega0tc-2}. 

We want to apply Cea's method presented in Section \ref{sec:adjoint-state:prez} to find the adjoint states needed for the calculation of the shape derivative of $\SHPfun$.  
For this, we need to define a Lagrangian functional having independent variables lying in the space $H^1_0(\Omega_{0}^c) \times L^2_0(\Omega_{0}^c) \times  H^1_{0, \Gamma_{\omega}}(\Omega_{0}) \times  L^2(\Omega_{0})$ which is independent of $t$. 
This is done by writing down the FSI formulation \eqref{eq:var:NS:sur-Omega0c2}-\eqref{eq:lag:structure:pbm} into the general form depicted in \eqref{eq:general_form1}.
 \\

The construction of the Lagrangian requires to use the transformation  $\lT^t_{ w }$ defined in \eqref{eq:def:T-t-up-w-down}. 
We recall that 
\begin{equation}\label{eq:def_Ttw2}
\lT^t_{ w } = \Phi_t + \mathcal{R}\gamma( w ).
\end{equation}

For $t\geq 0$, for all $( v ,  q )$ and $( \vcTDual, \prTDual )$ in $( H^1_0(\Omega^c_{0}) )^2  \times L^2_0(\Omega^c_{0})$, and for all $(  w  ,  s   )$ and $( \dspDual ,  \LagVolDual )$ in $ ( H^1_{0, \Gamma_{\omega} } (\Omega_{0}) )^2 \times L^2 (\Omega_{0})$, 
as it is suggested by \eqref{eq:def:lagrangien} in the adjoint method introduced in the previous section, we define the Lagrangian by:
\begin{align}
 \mathcal{L}  (t, ( v , q  , w  , s  ), ( \vcTDual, \prTDual , \dspDual , \LagVolDual ) ) & =
\SHPfun^t  (\Omega_{0};  v , w )  \notag\\
& + a_{F}^t ( w  ;  v  , \vcTDual ) + b_{F}^t ( w  ; \vcTDual,  q  ) - f_{F}^t ( w  ; \vcTDual    ) +
 b_{F}^t ( w ;  v   , \prTDual )  \notag \\ 
& +  a_{S}^t ( w  , \dspDual) + b_{S}^t ( \dspDual,  s  ) - f_{S}^t ( w  ; v  ; q  ;  \dspDual ) +  b_{S}^t ( w  , \LagVolDual )  ,  
\label{eq:def:LAG}
\end{align}
where 
\begin{equation}
\SHPfun^t  (\Omega_{0};  v , w ) :=
\int_{\Omega_{0}^c} j_F (  \lT^t_{ w }\hspace{0cm}  , v , \gd  v  \gd (  \lT^t_{ w }\hspace{0cm} )^{-1}) J(  \lT^t_{ w }) 
+ \int_{\Omega_{0}} j_S ( \Phi  _t ,  w  , \gd w  \nabla \Phi_t ^{-1}  ) J(\Phi_t)  .
\end{equation}
In the expression \eqref{eq:def:LAG} of $\mathcal{L}$, the functionals $a_{F}^t$, $b_{F}^t$, $f_{F}^t$ are defined in \eqref{eq:def:aF^t}, \eqref{eq:def:bF^t}, \eqref{eq:def:fF^t}, and the functionals $a_{S}^t$, $b_{S}^t$, $f_{S}^t$ are defined in  \eqref{eq:def:aS^t}, \eqref{eq:def:bS^t}, \eqref{eq:def:fS^t}
in which the transform 
$\eT_{ w }^t$ is used in place of $T_t\circ \Phi_t$ that is, e.g. 
$$
a_{F}^t( w ; v  , \vcTDual)  :=
%
%
\nu\int_{\Omega_{0}^c} ( \gd  v   ) F( \eT_{ w }^t ) \Mscalp  \gd \vcTDual.
$$

The expression of the Lagrangian is then given by:
\begin{align}
\mathcal{L}  (t, ( v , q &, w , s  ), ( \vcTDual, \prTDual , \dspDual , \LagVolDual ) ) =   \notag \\
& \hspace{3ex} \int_{\Omega_{0}^c} j_F (  \lT^t_{ w }\hspace{0cm}  ,  v , \gd  v \gd (  \lT^t_{ w }\hspace{0cm} )^{-1}) J(  \lT^t_{ w }\hspace{0cm} ) 
+ \int_{\Omega_{0}} j_S ( \Phi  _t ,  w  , \gd  w  \nabla \Phi_t ^{-1}  ) J(\Phi_t)   \notag \\
&+ \int_{\Omega_{0}^c} \Big( \nu (\gd v ) F(  \lT^t_{ w }\hspace{0cm} ) \Mscalp  \gd  \vcTDual 
-   q  ( 
%
%
G(  \lT^t_{ w }\hspace{0cm} ) \Mscalp  \gd  \vcTDual )  
\Big) \notag \\
& \hspace{1cm} - \int_{\Omega_{0}^c}  \Big( \prTDual  ( 
%
%
G(  \lT^t_{ w }\hspace{0cm} ) \Mscalp  \nabla  v )   
+  ( f {\circ   \lT^t_{ w }\hspace{0cm} }\cdot  \vcTDual )  J(  \lT^t_{ w }\hspace{0cm} )  \Big)   \notag \\
&+ \int_{\Omega_{0}}  \Big(
\mu (\gd w ) F ( \Phi_t ) \Mscalp  (\gd  \dspDual ) 
-    s   G ( \Phi_t ) \Mscalp 
 \gd  \dspDual 
-   ( (g \circ \Phi_t) \cdot  \dspDual  )  J(\Phi_t)   
\Big)
\notag\\
&\hspace{1cm} 
- \int_{\Gamma_{0}}  \dspDual   \cdot(\nu(\gd  v ) F(  \lT^t_{ w }\hspace{0cm} ) 
-  q   G(  \lT^t_{ w }\hspace{0cm} ) ) n_{0}
- \int_{\Omega_{0} }   \LagVolDual  G ( \Phi_t ) \Mscalp 
 \gd  w   .
\label{eq:der:lag:2}
\end{align}  

\bigskip
Let $( \vcT^t , \prT^t , \dsp^t , \LagVol ^t )$ defined in 
\eqref{eq:def:vt2}, \eqref{eq:def:pt2}, \eqref{eq:def:wt_phi}, and \eqref{eq:def:st_phi}, be the transported solutions of the coupling Stokes problem \eqref{eq:var:NS:sur-Omega0c2} and incompressible elasticity problem \eqref{eq:lag:structure:pbm}.
We start remarking that 
$\lT^{t}_{ \dsp^t } = T_t \circ \Phi_t$ (see \eqref{eq:def:Tt:new}) and as a result the following property holds 
\begin{equation}
\SHPfun^t  (\Omega_{0}; \vcT^t, \dsp^t) = \SHPfun(\Omega_{0,t}),
\end{equation}
where $\SHPfun(\Omega_{0,t})$ is given by \eqref{def:shape-func}.
Such as for \eqref{pbm:adjoint-A3} in the introductory paragraph \ref{sec:adjoint-state:prez}, we have that for all $( \vcTDual, \prTDual ,\dspDual ,  \LagVolDual )$ in $ ( H^1_0(\Omega^c_{0}) )^2 \times L^2_0(\Omega^c_{0}) \times  ( H^1_{0, \Gamma_{\omega} } (\Omega_{0}) )^2 \times L^2 (\Omega_{0})$:
\begin{align}
\mathcal{L} (t, (\vcT^t, \prT ^t , \dsp^t,  \LagVol ^t), ( \vcTDual, \prTDual , \dspDual , \LagVolDual )) 
= \SHPfun (\Omega_{0,t}) .
\end{align}  

Now we can compute the partial derivatives of the Lagrangian in order to find the suitable adjoint states allowing to simplify the expression \eqref{eq:shpder:final} of the shape derivative.

\subsubsection{Derivatives of the Lagrangian}
\label{sec:lag:der}

In order to obtain the adjoint problems, we need to derive the Lagrangian $\mathcal{L}$ with respect to the variables $ v $,  $ q $, $ w $, and $ s $. 
The derivatives of $\mathcal{L}$ are evaluated at $t\in\mathbb{R}_+$, 
$( v   ,  q   ), ( \vcTDual, \prTDual ) \in ( H^1_0(\Omega^c_{0}) )^2 \times L^2_0(\Omega^c_{0})$ and  $( w   ,  s   ), ( \dspDual,  \LagVolDual )  \in ( H^1_{0, \Gamma_{\omega} } (\Omega_{0}) )^2 \times L^2 (\Omega_{0})$. 
For the sake of readability, the dependence on these variables is not explicitly stated in the calculation of the derivatives.

We first derive the Lagrangian with respect to the variables $ q $ and $ s $. Let $d\in L^2_{0}(\Omega_0^c)$ and $e \in L^2(\Omega_0)$. We have
\begin{align}
\langle \frac{\partial \mathcal{L} }{\partial  q }
%
%
, d \rangle
&= - \int_{\Omega_{0}^c} d (
%
%
 G(  \lT^t_{ w }\hspace{0cm} ) \Mscalp \nabla  \vcTDual)  
+ \int_{\Gamma_0}  \dspDual  \cdot d ( 
%
%
G(  \lT^t_{ w }\hspace{0cm} ) ) n_0  , \label{pbm:dual:pression} \\
\langle \frac{\partial \mathcal{L} }{\partial  s }
%
%
, e \rangle
&= -  \int_{\Omega_{0} }   e G( \Phi_t ) \Mscalp  \gd  \dspDual  .
\label{pbm:dual:lag-mult}
\end{align}
For the derivative of the Lagrangian with respect to the variable $ v $ and $ w $, 
we shall simply write $D_\alpha j_F$ and $D_\alpha j_S$ instead of $D_\alpha j_F (  \lT^t_{ w }\hspace{0cm}  , v  , \gd v  \gd (  \lT^t_{ w }\hspace{0cm} )^{-1})$ and $D_\alpha j_S ( \Phi  _t ,  w  , \gd w (\gd \Phi_t)^{-1} )$ respectively, for $\alpha = 1,2,3$.
Let  $h \in H^{1}_0(\Omega_0^c)$ and $k \in H^{1}_{0, \Gamma_{\omega} }(\Omega_0)$. We have
\begin{align}
\langle \frac{\partial \mathcal{L} }{\partial  v } , h \rangle &=  
\int_{\Omega_{0}^c}  (  (D_2 j_F) h
+ (D_3 j_F)  \gd h \gd (   \lT^t_{ w }\hspace{0cm} )^{-1} )  J(  \lT^t_{ w }\hspace{0cm} )
  \notag \\
& + \int_{\Omega_{0}^c}  
\Big(
\nu(\gd h ) F(  \lT^t_{ w }\hspace{0cm} ) \Mscalp  \gd  \vcTDual  
-  \prTDual  
 G(  \lT^t_{ w }\hspace{0cm} ) \Mscalp \nabla h  \Big)
- \int_{\Gamma_{0}}  \dspDual   \cdot \nu(\gd h ) F(  \lT^t_{ w }\hspace{0cm} )   n_{0}   ,
\label{pbm:dual:velocity}
\end{align}
and
\begin{align}
\langle \frac{\partial \mathcal{L} }{\partial w } , k \rangle & =  
 \int_{\Omega_{0}^c} \Big( (j_F)  D_{ w }   J(  \lT^t_{ w }\hspace{0cm} )    k    
+ [ (D_1 j_F) D_{ w } (  \lT^t_{ w }\hspace{0cm}  )k  
+ (D_3 j_F) \gd  v    D_{ w } (\gd (  \lT^t_{ w }\hspace{0cm}  )^{-1})  k ] J(  \lT^t_{ w }\hspace{0cm} )  \Big)  \notag \\
&+ \int_{\Omega_{0}} \Big( (D_2 j_S)k  J(\Phi_t)  + (D_3 j_S)\gd k  \nabla \Phi_t ^{-1}  J(\Phi_t)  \Big)  \notag \\
&+ \int_{\Omega_{0}^c} \Big(  [ \nu \gd v   D_{ w } F(  \lT^t_{ w }\hspace{0cm} )   k  
-    q 
%
%
D_{ w } G(  \lT^t_{ w }\hspace{0cm} )   k ] \Mscalp  \gd  \vcTDual  
- ( D_{ w } G(   \lT^t_{ w }\hspace{0cm}  ) k  \Mscalp  \nabla  v )  \prTDual  \Big) 
 \notag\\
&\hspace{0.2cm} 
-\int_{\Omega_{0}^c} \Big(  ( D_{ w } (f \circ\lT^t_{ w } )   k \cdot  \vcTDual ) J(  \lT^t_{ w }\hspace{0cm} )   
+ ( f {\circ   \lT^t_{ w }\hspace{0cm} } \cdot  \vcTDual ) D_{ w } J(  \lT^t_{ w }\hspace{0cm} ) k  \Big) 
 \notag \\
&+  \int_{\Omega_{0}}  \mu  (\gd k) F( \Phi_t) \Mscalp   \gd  \dspDual 
- \int_{\Omega_{0} }     \LagVolDual G ( \Phi_t) \Mscalp  \gd k 
  \notag\\
&\hspace{0.2cm} 
- \int_{\Gamma_{0}}  \dspDual   \cdot \left( \nu \gd \vcT  D_{ w } F(   \lT^t_{ w }\hspace{0cm}  )  k  -  \prT  
%
%
 D_{ w } G(   \lT^t_{ w }\hspace{0cm}  )  k \right)   n_{0} ,
\label{pbm:dual:deplacement}
\end{align}
where the derivatives $D_{ w }(\cdot)$ with respect to the variable $ w $ are given in Appendix \ref{sec:FSI:appendix} expressions \eqref{eq:DJbar}, \eqref{eq:DGbar}, and \eqref{eq:DFbar}. \\

Finally we calculate partial derivative of the Lagrangian with respect to the variable $t$, referring once again to Appendix \ref{sec:FSI:appendix} for the expressions \eqref{eq:DJ2Vt}, \eqref{eq:DG2Vt}, and \eqref{eq:DF2Vt} of the time derivatives $D_t(\cdot)$ of $  J(   \lT^t_{w}  )$, $ G(   \lT^t_{w}  )$, and $  F(   \lT^t_{w}  )$.  
With the use of \eqref{eq:ddt(det)_divV}, we obtain
\begin{align}
\frac{\partial \mathcal{L} }{\partial t} & = 
\int_{\Omega_{0}^c}  \Big(
(j_F) D_t J(   \lT^t_{ w }\hspace{0cm}  )   
+  (D_1 j_F) D_t(   \lT^t_{ w }\hspace{0cm} )  J(   \lT^t_{ w }\hspace{0cm}  ) 
+  (D_3 j_F) \gd  v  D_t (\gd (   \lT^t_{ w }\hspace{0cm} )^{-1})      
J(   \lT^t_{ w }\hspace{0cm}  )  
\Big) \notag \\
&+ \int_{\Omega_{0}} \Big(  
(j_S) \div V  + (D_1 j_S) V   J(\Phi_t)  + (D_3 j_S) \gd  w  D_t  \nabla \Phi_t ^{-1}  J(\Phi_t)
\Big)  \notag \\
&+ \int_{\Omega_{0}^c} \Big( 
[  \nu (\gd v )  D_t ( F(   \lT^t_{ w }\hspace{0cm}  ))
-   q   D_t ( G(   \lT^t_{ w }\hspace{0cm}  ) )  ] \Mscalp  \gd  \vcTDual 
- \prTDual  D_t ( G(   \lT^t_{ w }\hspace{0cm}  ) )  \Mscalp \nabla v 
\Big)  \notag\\
&\hspace{1cm} 
- \int_{\Omega_{0}^c} \Big( 
( f {\circ   \lT^t_{ w }\hspace{0cm} } \cdot  \vcTDual ) D_t J(   \lT^t_{ w }\hspace{0cm}  )
+ ( D_t (f {\circ   \lT^t_{ w }\hspace{0cm} })  \cdot  \vcTDual ) J(   \lT^t_{ w }\hspace{0cm}  ) 
\Big)  \notag \\
& + \int_{\Omega_{0}} \Big( 
[ \mu  (\gd w  ) D_t F( \Phi_t )  - \LagVol  D_t G( \Phi_t ) ] \Mscalp    \gd  \dspDual 
-   \LagVolDual  D_t G( \Phi_t )  \Mscalp  \gd  w   
\Big) \notag\\
&\hspace{1cm} 
- \int_{\Gamma_{0}}  \dspDual   \cdot(\nu \gd  v     D_t F(   \lT^t_{\dsp}\hspace{0cm}  ) -  q   D_t (
%
%
G(   \lT^t_{ w }\hspace{0cm} ) ) ) n_{0} , 
\label{eq:Lag:der:t}
\end{align}
where we recall that 
$\lT^t_{ w }  = \Phi  _t + \mathcal{R}( \gamma ( w  ) )$ (see  \eqref{eq:def:T-t-up-w-down}).
This formula can be simplified by noticing that finally, $D_t \lT^t_{ w } = V_t := D_t \Phi_t $.

\subsubsection{Definition of the adjoint states}
\label{sec:FSI:def-adj}

Let us write the adjoint equations. For this, the partial derivatives of the Lagrangian calculated in the previous section are evaluated at 
$(t,\vcT, \prT, \dsp,  \LagVol ) = (0,\vcT^0, \prT^0, \dsp^0,  \LagVol^0 ) $
where $(\vcT^{0}, \prT ^{0})$ is the solution of Problem \eqref{eq:var:NS:sur-Omega0tc-2} written at $t=0$ and
$(\dsp^{0},  \LagVol ^{0})$ is the solution of Problem \eqref{pbm:elast-incomp-t:var-form} written at $t=0$. 
Since for $t=0$ we have $T_{\dsp}^0=\id_{\RR^2} + \R\gamma(\dsp) = T$, 
we obtain from equations \eqref{pbm:dual:pression}, \eqref{pbm:dual:lag-mult}, \eqref{pbm:dual:velocity}, and \eqref{pbm:dual:deplacement} that for all $( \vcTDual, \prTDual ),(h,d)\in ( H^1_0(\Omega^c_{0}) )^2 \times L^2_0(\Omega^c_{0})$, and for all $( \dspDual , \LagVolDual ),(k, e) \in ( H^1_{0,\Gamma_{\omega} } (\Omega_{0}) )^2 \times L^2 (\Omega_{0})$:
\begin{align}
&\langle \frac{\partial \mathcal{L} }{\partial  \prT }( \vcTDual, \prTDual , \dspDual  , \LagVolDual ) , d \rangle
= - \int_{\Omega_{0}^c} d  (G(\eT) \Mscalp \nabla  \vcTDual) 
+ \int_{\Gamma_0}  \dspDual  \cdot d \,  G(\eT)   n_0 , 
\label{eq:adjoint-state-q} \\
&\langle \frac{\partial \mathcal{L} }{\partial  \LagVol }( \vcTDual, \prTDual , \dspDual , \LagVolDual ) , e \rangle
= -  \int_{\Omega_{0} }    e \div  \dspDual  , 
\label{eq:adjoint-state-s} \\
&\langle \frac{\partial \mathcal{L} }{\partial \vcT}(( \vcTDual, \prTDual ), ( \dspDual , \LagVolDual )) , h \rangle 
=  
\int_{\Omega_{0}^c} \Big( (D_2 j_F)  h  J(\eT) 
+ (D_3 j_F)  \gd h \gd (\eT)^{-1} J(\eT) \Big)  \notag \\
& \hspace{2ex}
+ \int_{\Omega_{0}^c} \Big(  \nu (\gd h ) F(\eT) \Mscalp   \gd  \vcTDual 
- \prTDual   ( G(\eT) \Mscalp  \nabla h)  \Big)
- \int_{\Gamma_{0}}  \dspDual   \cdot(\nu(\gd h )   F(\eT)  )  n_{0} , 
\label{eq:adjoint-state-v}\\
&\langle \frac{\partial \mathcal{L} }{\partial \dsp}(( \vcTDual, \prTDual ),   ( \dspDual , \LagVolDual )) , k \rangle 
=   
 \int_{\Omega_{0}} \Big(  (D_2 j_s)k   + (D_3 j_s)\gd k  \Big) \notag \\
& \hspace{2ex}
+ \int_{\Omega_{0}^c} \Big( (j_F) D_{ w }  J (\eT)   k   
+ \big[  (D_1 j_F) (D_{ w } (\eT)   k)  
+ (D_3 j_F) \gd \vcT  D_{ w } (\gd (\eT)^{-1}) k \big]  J(T)  \Big)  \notag \\
%
%
%
%
& \hspace{2ex}
+\int_{\Omega_{0}^c} \Big( [ \nu (\gd \vcT)  (D_{ w }  F(\eT)   k )  
-      q ( (D_{ w }  G(\eT)   k ) ] \Mscalp  \gd  \vcTDual   
- \prTDual (D_{ w }  G(\eT)   k  \Mscalp  \nabla \vcT)  \Big)
  \notag\\
& \hspace{2ex}
-\int_{\Omega_{0}^c} \Big(  ( \gd f ) {\circ \eT} D_{ w } (\eT)   k   \cdot  \vcTDual  J(T)
+ ( f {\circ \eT} \cdot  \vcTDual ) D_{ w }  J(\eT)   k   \Big)   \notag \\
&\hspace{2ex}
+  \int_{\Omega_{0}} \Big(  \mu   \gd k    \Mscalp   \gd  \dspDual 
-   \LagVolDual \div  k \Big)
- \int_{\Gamma_{0}}  \dspDual   \cdot(\nu(\gd \vcT )  D_{ w }  F(\eT)   k   - q   D_{ w }  G(\eT)  k )   n_{0}.
\label{eq:adjoint-state-w}
\end{align}
Once again we have written $D_\alpha j_F$ instead of $D_\alpha j_F (\eT  , \vcT, \gd \vcT \gd (\eT )^{-1})$ and $D_\alpha j_S$ instead of $D_\alpha j_S ( Y , \dsp, \gd \dsp )$, for $\alpha = 1,2,3$. 

With these expressions, we can write in the following proposition the problem satisfied by the adjoint states associated to the shape functional $\SHPfun $ defined in \eqref{def:shape-func:2} and to the FSI problem \eqref{eq:complete-syst-stokes-stokes-transported}.

\begin{prop}
\label{thm:adj:state}
Let $\mathcal{L}$ be the Lagrangian defined in \eqref{eq:der:lag:2}, associated to the shape functional $\SHPfun $ defined in \eqref{def:shape-func:2} and to the FSI problem \eqref{eq:complete-syst-stokes-stokes-transported}. 
The related adjoint state $( \vcTDual, \prTDual , \dspDual , \LagVolDual ) \in ( H^1_0(\Omega_{0}^c) )^2 \times L^2_0(\Omega_{0}^c) \times ( H^1_{0,\Gamma_{\omega}}(\Omega_{0}) )^2 \times L^2(\Omega_{0})$ defined by \eqref{pbm:adjoint-A}
satisfies formally the following equation:
\begin{equation}
\left\lbrace
\begin{aligned}
%
&\langle \frac{\partial \mathcal{L} }{\partial  \prT }(( \vcTDual, \prTDual ), ( \dspDual , \LagVolDual )) , d \rangle 
+\langle \frac{\partial \mathcal{L} }{\partial  \LagVol }(( \vcTDual, \prTDual ), ( \dspDual , \LagVolDual )) , e \rangle \\
&\hspace{15ex}
+\langle \frac{\partial \mathcal{L} }{\partial \vcT}(( \vcTDual, \prTDual ), ( \dspDual , \LagVolDual )) , h \rangle 
+
\langle \frac{\partial \mathcal{L} }{\partial \dsp}(( \vcTDual, \prTDual ),   ( \dspDual , \LagVolDual )) , k \rangle 
= 0 , \\
& \forall ( h, d ,k , e) \in ( H^1_0(\Omega_{0}^c) )^2 \times L^2_0(\Omega_{0}^c) \times ( H^1_{0,\Gamma_{\omega}}(\Omega_{0}) )^2 \times  L^2(\Omega_{0})  , 
\end{aligned}
\right.
\label{eq:pbm:FSI:adj} 
\end{equation}
given by expressions \eqref{eq:adjoint-state-q}, \eqref{eq:adjoint-state-s}, \eqref{eq:adjoint-state-v}, and \eqref{eq:adjoint-state-w}.
\end{prop}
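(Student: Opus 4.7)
The plan is essentially to unfold the abstract definition of the adjoint state in \eqref{pbm:adjoint-A} into its four scalar components and then perform the routine (though long) differentiation of the explicit Lagrangian \eqref{eq:der:lag:2}. More precisely, writing the generic state as $\X=(v,q,w,s)$ and the test quadruplet as $\Z=(h,d,k,e)$ ranging in $\mathbf{Q}=(H^1_0(\Omega_{0}^c))^2 \times L^2_0(\Omega_{0}^c) \times (H^1_{0,\Gamma_{\omega}}(\Omega_{0}))^2 \times L^2(\Omega_{0})$, I would first observe that, by continuity and linearity of the partial differentials, the condition $\langle \partial_{\X}\mathcal{L}(0,\X_0,\Y_0),\Z\rangle = 0$ for every $\Z\in\mathbf{Q}$ is equivalent to the four scalar identities obtained by taking $\Z$ with only one nonzero component, namely
\begin{equation*}
\langle \partial_{v}\mathcal{L},h\rangle=0,\quad
\langle \partial_{q}\mathcal{L},d\rangle=0,\quad
\langle \partial_{w}\mathcal{L},k\rangle=0,\quad
\langle \partial_{s}\mathcal{L},e\rangle=0,
\end{equation*}
for all admissible $h,d,k,e$. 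Summing them back gives the system \eqref{eq:pbm:FSI:adj}, so it remains to identify each of the four partial derivatives with the expressions \eqref{eq:adjoint-state-q}--\eqref{eq:adjoint-state-w}.

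For $\partial_{q}\mathcal{L}$ and $\partial_{s}\mathcal{L}$ the computation is immediate: inspecting \eqref{eq:der:lag:2}, the variable $q$ appears only linearly in the terms $-\int_{\Omega_0^c} q(G(T^t_w)\Mscalp\nabla\vcTDual)$ and $\int_{\Gamma_0}\dspDual\cdot qG(T^t_w)n_0$, while $s$ appears only in $-\int_{\Omega_0}s\, G(\Phi_t)\Mscalp\nabla\dspDual$. Differentiating with respect to $q$ and $s$ therefore yields directly \eqref{pbm:dual:pression}--\eqref{pbm:dual:lag-mult}, which evaluated at $t=0$ (using $\Phi_0=\id$, whence $G(\Phi_0)=\Id$) give \eqref{eq:adjoint-state-q} and \eqref{eq:adjoint-state-s}. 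For $\partial_{v}\mathcal{L}$ the dependence of $\mathcal{L}$ on $v$ is linear in the Stokes bilinear form and in the shape functional through $j_F$; a direct chain rule -- differentiating $j_F(T^t_w,v,\nabla v\,\nabla(T^t_w)^{-1})$ in its second and third slots -- produces \eqref{pbm:dual:velocity}.

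The only genuinely computational step is $\partial_{w}\mathcal{L}$, because $w$ enters nonlinearly through $T^t_w=\Phi_t+\mathcal{R}\gamma(w)$, and hence through $F(T^t_w)$, $G(T^t_w)$, $J(T^t_w)$, the composition $f\circ T^t_w$, and the first and third slots of $j_F$. For each of these I would apply the chain rule combined with the formulas for $D_w J$, $D_w G$, $D_w F$ recalled in Appendix \ref{sec:FSI:appendix} (expressions \eqref{eq:DJbar}--\eqref{eq:DFbar}), together with the fact that $D_w T^t_w\,k=\mathcal{R}\gamma(k)$ and $D_w(f\circ T^t_w)\,k=(\nabla f\circ T^t_w)\mathcal{R}\gamma(k)$. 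Collecting all resulting volume and surface contributions gives \eqref{pbm:dual:deplacement}, and evaluating at $(t,\X)=(0,\X^0)$ -- so that $T^0_{\dsp^0}=T$ and $F(\Phi_0)=\Id$, $J(\Phi_0)=1$ -- yields \eqref{eq:adjoint-state-w}.

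The main difficulty is purely bookkeeping: making sure that every occurrence of $w$ in \eqref{eq:der:lag:2} (there are many, spread across volume integrals on $\Omega_0$ and $\Omega_0^c$ and the surface integral on $\Gamma_0$) is differentiated correctly and that the resulting terms are regrouped exactly in the form of \eqref{eq:adjoint-state-w}. Everything else is a direct consequence of the linearity of $\mathcal{L}$ in the remaining variables. Since the statement is formulated only at a formal level -- no claim of existence or uniqueness of $\Y_0$ is made -- there is no additional functional-analytic obstacle, and the proof reduces to this systematic differentiation followed by evaluation at the FSI solution.
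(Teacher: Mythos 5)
Your proposal is correct and follows essentially the same route as the paper: the paper likewise obtains \eqref{eq:pbm:FSI:adj} by differentiating the explicit Lagrangian \eqref{eq:der:lag:2} term by term (the linear occurrences of $q$ and $s$, the chain rule through $j_F$ for $v$, and the chain rule through $\lT^t_w$ with the differentials \eqref{eq:DJbar}--\eqref{eq:DFbar} and $D_w(\lT^t_w)k=\mathcal{R}\gamma(k)$ for $w$), then evaluating at $(t,\X)=(0,\X^0)$ where $\lT^0_{\dsp^0}=\eT$ and $F(\Phi_0)=G(\Phi_0)=\Id$, $J(\Phi_0)=1$, and finally unfolding the abstract condition \eqref{pbm:adjoint-A} componentwise. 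No gap to report.
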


We emphasise that the method we have just presented in Section \ref{sec:adjoint-state} is a formal method, since we have not shown that problem \eqref{pbm:adjoint-A} is well-posed. 


\subsection{Simplified formula for the shape derivative $\SHPfun ' (\Omega_0 ) $}
\label{sec:FSI:ShpDer:simplified}

We can simplify the formula of the shape derivative $\SHPfun ' (\Omega_0 ) $ given by \eqref{eq:shpder:final} obtained in Section \ref{sec:shader:costfunc}. 
For this, we follow what is done in Section \ref{sec:adjoint-state:prez}, using the formula \eqref{pbm:adjoint-A1}, that is
\begin{equation}
\SHPfun ' ( \Omega_0 ) = \frac{\partial \mathcal{L} }{\partial t} ( 0 , ( \vcT, \prT , \dsp , \LagVol ) , ( \vcTDual, \prTDual , \dspDual , \LagVolDual ) )
\end{equation}
with the formula obtained in \eqref{eq:Lag:der:t}, where $( \vcT, \prT , \dsp , \LagVol )$ is the solution of the FSI problem \eqref{eq:complete-syst-stokes-stokes-transported} and $( \vcTDual, \prTDual , \dspDual , \LagVolDual )$ is the solution of the adjoint problem \eqref{eq:pbm:FSI:adj}. 
This leads to the following theorem.

\begin{thm}
\label{thm:ShapeDer:simplified}
Let $\SHPfun (\Omega_0) $ be the shape functional defined by \eqref{def:shape-func:2}.    
Let $( \vcT, \prT , \dsp , \LagVol ) \in ( H^1_0(\Omega_{0}^c) )^2 \times L^2_0(\Omega_{0}^c) \times ( H^1_{0,\Gamma_{\omega}}(\Omega_{0}) )^2 \times L^2(\Omega_{0})$ be the solution of the FSI problem \eqref{eq:complete-syst-stokes-stokes-transported}, and $( \vcTDual, \prTDual , \dspDual , \LagVolDual ) \in ( H^1_0(\Omega_{0}^c) )^2 \times L^2_0(\Omega_{0}^c) \times ( H^1_{0,\Gamma_{\omega}}(\Omega_{0}) )^2 \times L^2(\Omega_{0})$ be the adjoint states solution of the adjoint problem \eqref{eq:pbm:FSI:adj}. 
Then the shape derivative of $\SHPfun (\Omega_0)$ can be written as follows:
\begin{align}
\SHPfun ' (\Omega_0) & = 
\int_{\Omega_{0}^c}  j_F (\eT,\vcT,\gd \vcT (\gd \eT)^{-1})  D J(V)   
+  D_1 j_F (\eT,\vcT,\gd \vcT (\gd \eT)^{-1}) V  J( \lT  )   \notag\\
&+ \int_{\Omega_{0}^c}  D_3 j_F (\eT,\vcT,\gd \vcT (\gd \eT)^{-1}) \gd \vcT (- \gd T ^{-1} \gd V  \gd T ^{-1}       )
J(  \lT  )\notag \\
%
%
&+ \int_{\Omega_{0}} \Big(  j_S (Y, \dsp , \gd \dsp ) \div V + D_1 j_S (Y, \dsp , \gd \dsp ) V  + D_3 j_S( Y, \dsp , \gd \dsp  ) \gd \dsp (-\gd V)   \Big) , \notag \\
& + \mathcal{A} ' ( (\vcT , \prT , \dsp , \LagVol) , (\vcTDual , \prTDual , \dspDual , \LagVolDual) , V ) ,
\label{eq:SHAPE-DER-SIMPLIF}
\end{align}
where $\mathcal{A} '$ is given by
\begin{align}
\mathcal{A} ' ( (\vcT , \prT , \dsp , \LagVol) , (\vcTDual , \prTDual , \dspDual , \LagVolDual) , V )
&:= \int_{\Omega_{0}^c} \Big( [  \nu \gd \vcT  D  F( V )
-   \prT    DG(V)   ] \Mscalp  \gd  \vcTDual 
- \prTDual DG(V)  \Mscalp \nabla \vcT \Big)  
  \notag\\
%
%
&\hspace{1cm} 
- \int_{\Omega_{0}^c} \Big( ( f {\circ   \lT  } \cdot  \vcTDual ) D J(V)
+ ( D_t (f {\circ   \lT  })  \cdot  \vcTDual ) J( \lT ) \Big) 
  \notag \\
%
%
& + \int_{\Omega_{0}} \Big( [ \mu  (\gd \dsp ) D F(V)  - \LagVol  D G(V) ] \Mscalp    \gd  \dspDual 
-   \LagVolDual  D G(V)  \Mscalp  \gd \dsp \Big) \notag\\
&\hspace{1cm} 
- \int_{\Gamma_{0}}  \dspDual   \cdot (\nu \gd \vcT    D  F( V  ) 
-  \prT   DG(V)   ) n_{0} , 
\label{eq:SHAPE-DER-SIMPLIF:2}
\end{align}
and where $\eT := \eT_0$ is given by \eqref{eq:def:Tt:new}, $V$ is the velocity of the transformation $\Phi_t$ given by \eqref{eq:def:Phit}, and $DJ(V)$, $DG(V)$, and $DFJ(V)$ are given by 
\begin{align}
\label{eq:DJ2V} 
DJ (V) &= 
\tr ( \cof(\gd \eT)^\tp   \gd V  )
 , \\
\label{eq:DG2V}
DG (V) &=
\cof(\gd \eT)  
\left[
\tr \left( (\gd \eT)^{-1} \gd V \right) \mathrm{I} 
 - [ (\gd \eT)^{-1}  \gd V   ]^\tp
\right]  , \\
\label{eq:DF2V}
DF (V) &= 
\cof(\gd \eT)^{\tp}
\left[   \tr \left( (\gd \eT)^{-1} \gd V \right) \mathrm{I}   
 - 2[ \gd V (\gd \eT)^{-1}]^s  \right]  (\gd \eT)^{-\tp} , 
\end{align}
and denote the time derivatives of $  J(   \lT^t_{w}  )$, $ G(   \lT^t_{w}  )$, and $  F(   \lT^t_{w}  )$ computed in \eqref{eq:DJ2Vt}, \eqref{eq:DG2Vt}, and \eqref{eq:DF2Vt}, and evaluated at $t=0$ and $w = \dsp$.
\end{thm}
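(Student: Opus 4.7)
The plan is to apply Céa's identity \eqref{pbm:adjoint-A1} in order to reduce the shape derivative to the single expression
\[
\SHPfun'(\Omega_0)=\frac{\partial\mathcal L}{\partial t}\bigl(0,(\vcT,\prT,\dsp,\LagVol),(\vcTDual,\prTDual,\dspDual,\LagVolDual)\bigr),
\]
where $(\vcT,\prT,\dsp,\LagVol)$ solves the FSI problem and $(\vcTDual,\prTDual,\dspDual,\LagVolDual)$ is any solution of the adjoint problem \eqref{eq:pbm:FSI:adj}. This identity is precisely what cancels the material derivatives appearing in the direct formula \eqref{eq:shpder:final}: indeed, the chain rule gives
\[
\SHPfun'(\Omega_0)=\partial_t\mathcal L\big|_{t=0}+\Big\langle \partial_{\X}\mathcal L\big|_{t=0},\dot{\X}\Big\rangle,
\]
and the second bracket vanishes by \eqref{eq:pbm:FSI:adj}. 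Thus, everything reduces to evaluating \eqref{eq:Lag:der:t} at $t=0$ and at the state/adjoint pair.

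First, I would collect the time-derivatives that appear in \eqref{eq:Lag:der:t}, namely $D_t J(\Tupt_w)$, $D_t G(\Tupt_w)$, $D_t F(\Tupt_w)$, $D_t\lT^t_w$, $D_t(\gd \lT^t_w)^{-1}$, and their analogues with $\Phi_t$ in place of $\lT^t_w$, and evaluate them at $t=0$, $w=\dsp$. Since $\lT^0_{\dsp}=\eT$ and $\Phi_0=\id_{\RR^2}$, one gets $D_t\lT^t_{\dsp}|_{t=0}=V$ and $D_t\Phi_t|_{t=0}=V$, while the derivatives of $J$, $G$, $F$ are exactly the expressions $DJ(V)$, $DG(V)$, $DF(V)$ displayed in \eqref{eq:DJ2V}--\eqref{eq:DF2V} (these come from the appendix formulas \eqref{eq:DJ2Vt}--\eqref{eq:DF2Vt}). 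In particular, $D_t J(\Phi_t)|_{t=0}=\div V$ and $D_t(\gd\Phi_t)^{-1}|_{t=0}=-\gd V$, and $D_t(\gd\lT^t_w)^{-1}|_{t=0}=-(\gd T)^{-1}\gd V(\gd T)^{-1}$.

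Next, I would regroup the resulting integrand into two blocks. The block containing the densities $j_F$, $j_S$ and their partials $D_1,D_2,D_3$ is independent of the adjoint state, and, after substitution of the above values, reproduces exactly the first three integrals of \eqref{eq:SHAPE-DER-SIMPLIF}: the factor $\tr(\cof(\gd \eT)^\tp\gd V)$ is the expression $DJ(V)$; the $(D_3 j_F)$ term receives the $-(\gd T)^{-1}\gd V(\gd T)^{-1}$ contribution multiplying $\gd \vcT$; on $\Omega_0$ the factor $J(\Phi_t)$ yields $\div V$, $D_t\Phi_t$ yields $V$, and $D_t(\gd\Phi_t)^{-1}$ yields $-\gd V$. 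The remaining block — the terms coupling $\vcT,\prT,\dsp,\LagVol$ with $\vcTDual,\prTDual,\dspDual,\LagVolDual$ through $D_tF,D_tG,D_tJ,D_t(f\circ\lT^t_w)$ on the bulk and on $\Gamma_0$ — is precisely the expression $\mathcal A'$ defined in \eqref{eq:SHAPE-DER-SIMPLIF:2}.

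The routine but careful step is the bookkeeping above; nothing is deep once the adjoint equation is invoked. The main delicate point is that Céa's identity \eqref{pbm:adjoint-A1} requires the differentiability of $t\mapsto \X^t$, which is not automatic. For this I would rely on Theorem \ref{thm:mainofsection4}, which guarantees that $t\mapsto(\vcT^t,\prT^t,\dsp^t,\LagVol^t)$ is $C^1$ near $0$ in the appropriate functional space, so that the formal manipulation $\SHPfun'(\Omega_0)=\partial_t\mathcal L+\langle\partial_{\X}\mathcal L,\dot{\X}\rangle$ is rigorous. With this regularity available, the cancellation of $\dot{\X}$ via the adjoint equation is legitimate, and the simplified formula \eqref{eq:SHAPE-DER-SIMPLIF}--\eqref{eq:SHAPE-DER-SIMPLIF:2} follows.
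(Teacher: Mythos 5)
Your proposal is correct and follows essentially the same route as the paper: invoke C\'ea's identity \eqref{pbm:adjoint-A1} so that the adjoint equation \eqref{eq:pbm:FSI:adj} cancels the material-derivative term, then evaluate $\partial_t\mathcal L$ from \eqref{eq:Lag:der:t} at $t=0$, $w=\dsp$ using the appendix formulas \eqref{eq:DJ2Vt}--\eqref{eq:DF2Vt}, which yields \eqref{eq:SHAPE-DER-SIMPLIF}--\eqref{eq:SHAPE-DER-SIMPLIF:2}. Your added appeal to Theorem \ref{thm:mainofsection4} to legitimize the chain rule is consistent with (and slightly more explicit than) the paper's presentation, which otherwise acknowledges the adjoint step as formal.
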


\begin{ex}
In the case of the energy-type shape functional $\SHPfun_E$ given by \eqref{eq:def:energy-type:shpfun}, denoting by 
$( \vcT, \prT , \dsp , \LagVol )$ the solution of the FSI problem \eqref{eq:complete-syst-stokes-stokes-transported}, and by $( \vcTDual, \prTDual , \dspDual , \LagVolDual )$ the adjoint states solution of the adjoint problem \eqref{eq:pbm:FSI:adj} written for $\SHPfun_E$, we have from Theorem~\ref{thm:ShapeDer:simplified} that
\begin{align}
\SHPfun_E' (\Omega_{0}) &=
\frac{1}{2} \int_{\Omega_{0}}  \lvert \gs \dsp \rvert^{2}\div(V)  
+ \frac{1}{2} \int_{\Omega_{0}^{c}}
\big\lvert \big[ \nabla \vcT(\nabla \eT)^{-1}  \big]^s  \big\rvert^{2}  \tr(\cof(\nabla \eT)^{\tp}\nabla V )  \notag\\
& -  \int_{\Omega_{0}}    \gs \dsp \Mscalp  \left(  \nabla \dsp \nabla V \right) 
-  \int_{\Omega_{0}^{c}}\big[ \nabla \vcT(\nabla \eT)^{-1}  \big]^s  \Mscalp    \big( \nabla \vcT(\nabla \eT)^{-1} \nabla V \cof(\nabla \eT)^{\tp}  \big)         \notag\\
&+ \mathcal{A} ' ( (\vcT , \prT , \dsp , \LagVol) , (\vcTDual , \prTDual , \dspDual , \LagVolDual) , V ) ,
%
\end{align}
where $\mathcal{A} '$ is given by \eqref{eq:SHAPE-DER-SIMPLIF:2}.
\end{ex}



\section{Appendix}
\label{sec:FSI:appendix}

\subsection{Derivatives of $J$, $G$, and $F$ maps}
\label{sec:derivJGF}

Let $\alpha : U \subset \VV \mapsto \varphi_\alpha \in (H^3 ( \Omega ))^2$ be a differentiable map, where $\VV$ is a normed vector space endowed with the norm $\lV\cdot\rV_{\VV}$, and $U$ is an open subset of $\VV$, and $\Omega$ is an open subset of $\RR^2$. 
Thus $\alpha : U \subset \VV \mapsto \gd \varphi_\alpha \in (H^2 ( \Omega ))^{2 \times 2}$ is differentiable, and we  
denote by $D_\alpha ( \gd \varphi_\alpha )$ the differential of $\alpha \mapsto \gd \varphi_\alpha$ at $\alpha$. 
Namely $D_\alpha ( \gd \varphi_\alpha )$ is the continuous linear map from $\VV$ to $(H^2 ( \RR^2 ))^{2 \times 2}$ such that for all $d\alpha \in \VV$:
\begin{equation}
\gd \varphi_{\alpha + d\alpha} =  \gd \varphi_{\alpha } + D_\alpha ( \gd \varphi_\alpha )d\alpha + o ( \lV d\alpha \rV_{\VV} ).
\end{equation}
Assuming $\gd \varphi_\alpha$ being invertible, we define the following maps depending on $\varphi_\alpha$:
\begin{align}
\label{eq:def:barJ}
J (\varphi_\alpha) &:= \det ( \gd \varphi_\alpha) , \\
\label{eq:def:barG}
G (\varphi_\alpha) &:= \cof( \gd \varphi_\alpha) , \\
\label{eq:def:barF}
F (\varphi_\alpha) &:= (\gd \varphi_\alpha)^{-1} \cof( \gd \varphi_\alpha ) ,
\end{align}
where $\cof( \gd \varphi_\alpha ) $ is the cofactor matrix of $\gd \varphi_\alpha$ defined by
\begin{equation}
\cof( \gd \varphi_\alpha ) = \det( \gd \varphi_\alpha  ) \gd \varphi_\alpha ^{-T} .
\end{equation}
We recall that the determinant $\det ( \cdot )$, the inverse $( \cdot )^{-1}$, and the cofactor $\cof (\cdot )$ matrix are differentiable maps defined on the open set of invertible matrices, and their differentials are given by the following expressions. Let $A,B \in \RR^{2 \times 2}$, $A$ being invertible, and 
$\lv B \rv$ sufficiently small so that $A+B$ is invertible, where $\lv B \rv$ is given in \eqref{eq:def:matrixnorm}. 
We have
\begin{align}
\label{eq:Diff:det}
\det (A + B) &= \det (A) + \tr ( \cof (A)^\tp  B )  + o ( \lv B \rv ) , \\
\label{eq:Diff:inv}
( A + B )^{-1} &= A^{-1}  -  A^{-1} B A^{-1} + + o (  \lv B \rv  ) , \\
\label{eq:Diff:cof}
\cof (A + B) &= \cof (A) 
+ \left( \tr ( \cof (A)^\tp  B ) \Id - \cof( A ) B^\tp    \right) A^{-\tp}  
+ o ( \lv B \rv ) .
\end{align}
As it is shown in Section \ref{sec:FS_resol_prelim}, the maps $J$, $G$, and $F$ are well-defined and differentiable because of the Banach algebra structure of $H^2 (\Omega)$. 
From there, applying the chain rule and using expressions \eqref{eq:Diff:det}, \eqref{eq:Diff:inv}, and \eqref{eq:Diff:cof},  we can compute the differentials $D_\alpha  J ( \varphi_\alpha  )$, $D_\alpha  G ( \varphi_\alpha  )$, and $D_\alpha  F ( \varphi_\alpha  )$. 
We give their expressions in the case where $\alpha =t $, $\alpha=w$, and $\varphi_\alpha = \lT^t_w$.



We recall that $\Phi  _t$ is the map defined in \eqref{eq:def:Phit} in Section \ref{subsec:settings} by
\begin{equation}
\Phi  _{t} := \id_{\RR^{\dn}} + t V ,
\end{equation}
and that we have defined in \eqref{eq:def:T-t-up-w-down}
the following $H^{3}(\Omega_0^c)$-valued map for all $(t , w) \in \RR_+ \times (H^1_{0, \Gamma_{\omega}}(\Omega_{0}) \cap H^3 (\Omega_{0}) )^2 $ by:
\begin{equation}
\Tupt_w :=  \Phi_t + \mathcal{R} \gamma (w)    .
\end{equation}
This map is differentiable, and its differential with respect to $w$ is given by 
\begin{align}
\label{eq:Diff:T_tw}
 D_w ( \lT^t_{ w } )   k 
&= \mathcal{R} ( \gamma (k) ) ,
\end{align} 
for all $k  \in (H^1_{0, \Gamma_{\omega}}(\Omega_{0}) \cap H^3 (\Omega_{0}) )^2 $.
Thus, from the definitions \eqref{eq:def:barJ}-\eqref{eq:def:barG}-\eqref{eq:def:barF}, the expressions \eqref{eq:Diff:det}-\eqref{eq:Diff:inv}-\eqref{eq:Diff:cof} and \eqref{eq:Diff:T_tw}
and in view of the chain rule,  
we can deduce the values of the following differentials: 
\begin{align}
\label{eq:DJbar}
D_w J ( \lT^t_{w} ) k
&= \tr ( \cof (\gd  \lT^t_{w} )^\tp  \mathcal{R} ( \gamma (k) ) )  ,  \\
\label{eq:DGbar}
D_w G ( \lT^t_{w} ) k
&= \left[ \tr ( (\gd  \lT^t_{w}) ^{-1} \mathcal{R} ( \gamma (k) ) )\mathrm{I} 
- (\gd  \lT^t_{w}) ^{-\tp} \mathcal{R} ( \gamma (k) ) ^\tp \right] \cof ( \gd  \lT^t_{w}  )  ,  \\
\label{eq:DFbar}
D_w F ( \lT^t_{w} ) k
&= \cof (\gd  \lT^t_{w} )^\tp \left[ \tr ( (\gd  \lT^t_{w} )^{-1} \mathcal{R} ( \gamma (k) ) )\mathrm{I} 
- 2 ( \mathcal{R} ( \gamma (k) ) (\gd  \lT^t_{w}) ^{-1}  )^s \right]  (\gd  \lT^t_{w}) ^{-\tp} .
\end{align}

Noting that the derivative of $\lT^t_w$ with respect to $t$ is given by
\begin{equation}
    \frac{\mathrm{d}}{\mathrm{d}t} \Tupt_w
 = V_t 
 := \frac{\mathrm{d}}{\mathrm{d}t}  \Phi_t   ,
\end{equation}
we can also deduce the time derivatives of  $  J(   \lT^t_{w}  )$, $ G(   \lT^t_{w}  )$, and $  F(   \lT^t_{w}  )$, given by
\begin{align}
\label{eq:DJ2Vt} 
D_t J ( \lT^t_{w} ) &= 
\tr ( \cof(\gd  \lT^t_{w} )^\tp   \gd V_t  )
 , \\
\label{eq:DG2Vt}
D_t G ( \lT^t_{w} ) &=
\cof(\gd  \lT^t_{w} )  
\left[
\tr \left( (\gd \lT^t_{w} )^{-1} \gd V_t \right) \mathrm{I} 
 - [ (\gd \lT^t_{w} )^{-1}  \gd V_t   ]^\tp
\right]  , \\
\label{eq:DF2Vt}
D_t F ( \lT^t_{w} ) &= 
\cof(\gd \lT^t_{w} )^{\tp}
\left[   \tr \left( (\gd \lT^t_{w} )^{-1} \gd V_t \right) \mathrm{I}   
 - 2[ \gd V_t (\gd \lT^t_{w} )^{-1}]^s  \right]  (\gd \lT^t_{w} )^{-\tp} .
\end{align}
By setting $\lT^t_{w} = \eT_0$ and $V_t = V$ in these expressions, we retrieve the fields $DJ (V) $, $DG (V) $, and $DF (V) $ involved in Theorem~\ref{thm:ShapeDer:simplified}.

\printbibliography  

\end{document}